\xpretocmd{\appendixpagename}{\sffamily}{}{}
\newcommand*{\connectorH}[4][]{
  \draw[#1] (#3) -| ($(#3) !#2! (#4)$) |- (#4);%
}
\newcommand\myhrule{\addlinespace[-0.2\aboverulesep]\cmidrule[0.4pt](l{2.1pt}r{2.1pt}){1-6}\addlinespace[-0.8\belowrulesep]}
\newcommand{\crefenv}[2]{%
  \namecref{#1}~\hyperref[#2]{\labelcref*{#1}.\ref*{#2}}%
}
\pgfplotsset{compat=newest}
\renewcommand{\ALG@beginalgorithmic}{\small}%
\algnewcommand{\Initialize}[1]{%
  \Statex \textbf{Initialize:}
  #1
}
\newcounter{algsubstate}
\newcounter{algsubsubstate}
\renewcommand{\thealgsubsubstate}{\roman{algsubsubstate}}
\algnewcommand{\lFor}[1]{\State\algorithmicfor\ #1\ \algorithmicdo} %
\algnewcommand{\EndlFor}{\unskip\ \algorithmicend\ \algorithmicfor} %
\algnewcommand{\IIf}[1]{\State\algorithmicif\ #1\ \algorithmicthen}%
\algnewcommand{\EndIIf}{\State\algorithmicend\ \algorithmicif}%
\algnewcommand{\EndIIfi}{\algorithmicend\ \algorithmicif}%
\algnewcommand{\ElseIIf}[1]{\State\algorithmicelse\ \algorithmicif\ #1\ \algorithmicthen} %
\renewenvironment{subequations}{%
  \refstepcounter{equation}%
  \protected@edef\theparentequation{\theequation}%
  \setcounter{parentequation}{\value{equation}}%
  \setcounter{equation}{0}%
  \def\theequation{\theparentequation.\arabic{equation}}%
  \ignorespaces
}{%
  \setcounter{equation}{\value{parentequation}}%
  \ignorespacesafterend
}
\let\texdisplaystyle\displaystyle
\def\displaytotextstyle{\textstyle\let\displaystyle\texdisplaystyle}
\newenvironment{talign*}
 {\let\displaystyle\displaytotextstyle\csname align*\endcsname}
 {\endalign}
\newcommand{\mathopfont}{\mathsf}
\DeclareMathOperator*{\argmin}{arg\,min}
\DeclareMathOperator*{\projection}{\mathopfont{proj}}
\DeclareRobustCommand{\lowerrighttriangle}{%
  \begingroup
  \setlength{\unitlength}{1ex}%
  \begin{picture}(1,1)
  \polyline(0,1)(1,0)(0,0)(0,1)
  \end{picture}%
  \endgroup
}
\newcommand{\R}{\mathbb{R}}
\newcommand{\ind}{\mathds{1}}
\DeclarePairedDelimiter\abs{\lvert}{\rvert}%
\DeclarePairedDelimiter\floor{\lfloor}{\rfloor}
\DeclarePairedDelimiterXPP\sign[1]{\operatorname{\mathopfont{sign}}}[]{}{#1}%
\DeclarePairedDelimiter\norm{\lVert}{\rVert}%
\DeclarePairedDelimiterX{\inner}[2]{\langle}{\rangle}{#1, #2}
\DeclarePairedDelimiterXPP\mydet[1]{\operatorname{\mathopfont{det}}}[]{}{#1}%
\let\oldvec\vec
\newcommand{\sorth}[1]{\reflectbox{\ensuremath{\oldvec{\reflectbox{\ensuremath{#1}}}}}}
\DeclarePairedDelimiterXPP\myvec[1]{\operatorname{\mathopfont{vec}}}[]{}{#1}%
\renewcommand{\vec}[1][]{\myvec{#1}}
\DeclarePairedDelimiterX{\mathargument}[1]{(}{)}{\mathargumentA{#1}}
\NewDocumentCommand{\mathargumentA}{>{\SplitArgument{1}{|}}m}{%
  \mathargumentB#1%
}
\NewDocumentCommand{\mathargumentB}{mm}{%
  \IfNoValueTF{#2}{%
    #1%
  }{%
    #1\;\delimsize|\;#2%
  }%
}
\NewDocumentCommand{\indicator}{e{_}}{%
  \ind\IfValueT{#1}{_{\{#1\}}}\mathargument
}
\DeclarePairedDelimiter\rdbracket{(}{)}
\NewDocumentCommand\maxsum{t{'}e{_^}}{%
\mathopfont{T}%
\IfBooleanT{#1}{'}%
\IfNoValueF{#2}{_{(#2)}}%
\IfNoValueF{#3}{^{#3}}%
\rdbracket}
\NewDocumentCommand\proj{t{'}e{_^}}{%
\projection%
\textstyle%
\IfBooleanT{#1}{'}%
\IfNoValueF{#2}{_{\!#2\!}}%
\IfNoValueF{#3}{^{\!#3\!}}%
\rdbracket}
\NewDocumentCommand\prox{t{'}e{_^}}{%
\mathopfont{prox}%
\textstyle%
\IfBooleanT{#1}{'}%
\IfNoValueF{#2}{_{#2}}%
\IfNoValueF{#3}{^{#3}}%
\rdbracket}
\DeclarePairedDelimiterX{\probargument}[1]{(}{)}{\probargumentA{#1}}
\NewDocumentCommand{\probargumentA}{>{\SplitArgument{1}{|}}m}{%
  \probargumentB#1%
}
\NewDocumentCommand{\probargumentB}{mm}{%
  \IfNoValueTF{#2}{%
    #1%
  }{%
    #1\;\delimsize|\;#2%
  }%
}
\NewDocumentCommand{\prob}{e{_}}{%
  \mathopfont{P}\IfValueT{#1}{_{#1}}\probargument
}
\NewDocumentCommand{\expect}{e{_}}{%
  \mathopfont{E}\IfValueT{#1}{_{#1}}\probargument
}
\NewDocumentCommand{\cvar}{e{_}}{%
  \mathopfont{CVaR}\IfValueT{#1}{_{#1}}\probargument
}
\NewDocumentCommand{\quantile}{e{_}}{%
  \mathopfont{Q}\IfValueT{#1}{_{#1}}\probargument
}
\NewDocumentCommand{\superquantile}{e{_}}{%
  \mathopfont{S}\IfValueT{#1}{_{#1}}\probargument
}
\newcommand{\maxsumball}[2]{\mathcal{B}^{#2}_{(#1)}}%
\renewcommand{\complement}{\mathsf{c}}
\newcommand{\kkt}{\mathrm{kkt}}
\newcommand{\ikkt}{\mathtt{kkt}}
\newtheorem{assumption}{Assumption}
\crefname{assumption}{Assumption}{Assumptions}
\newtheorem{proposition}{Proposition}
\theoremstyle{plain}
\theoremstyle{plain}
\newtheorem{claim}{Claim}
\crefname{claim}{Claim}{Claims}
\newcommand{\ie}{\emph{i.e.,\space}}
\newcommand{\eg}{\emph{e.g.,\space}}
\newcommand{\cf}{\emph{cf.\space}}
\newcommand{\blue}[1]{{\textcolor{black}{#1}}}
\newcommand{\refone}[1]{{\textcolor{black}{#1}}}
\newcommand{\reftwo}[1]{{\textcolor{black}{#1}}}
\newcommand{\refthree}[1]{{\textcolor{black}{#1}}}
\title{\Large On $O(n)$ Algorithms for Projection onto the Top-$k$-sum \blue{Sublevel Set}}
\author{Jake Roth \thanks{Department of Industrial and Systems Engineering, 
University of Minnesota, Minneapolis, MN 55414 (roth0674@umn.edu).}
\and Ying Cui 
\thanks{Department of Industrial Engineering and Operations Research, University of California, Berkeley,  Berkeley, CA  94720 (yingcui@berkeley.edu). The author is partially supported by the National Science Foundation under grants CCF-2416172 and DMS-2416250, and the National Institutes of Health under grant R01CA287413-01.} 
}
\date{}
\begin{document}
\maketitle

\begin{abstract}
The \emph{top-$k$-sum} operator computes the sum of the largest $k$ components of a given vector.
The Euclidean projection onto the top-$k$-sum sublevel set serves as a crucial subroutine in iterative methods to solve composite superquantile optimization problems.
In this paper, we introduce a solver that implements two finite-termination algorithms to compute this projection. 
Both algorithms have \refthree{$O(n)$ complexity of floating point operations} when applied to a sorted $n$-dimensional input vector, where the absorbed constant is \emph{independent of $k$}.
This stands in contrast to \blue{an} existing grid-search-inspired method that has $O(k(n-k))$ complexity, \blue{a partition-based method with $O(\blue{n}+D\log D)$ complexity, where $D\leq n$ is the number of distinct elements in the input vector, and a semismooth Newon method with a finite termination property but unspecified floating point complexity}.
The improvement \blue{of our methods over the first method} is significant when $k$ is linearly dependent on $n$, which is frequently encountered in practical superquantile optimization applications.
In instances where the input vector is unsorted, an additional cost is incurred to (partially) sort the vector, \blue{whereas a full sort of the input vector seems unavoidable for the other two methods}.
To reduce this cost, we further derive a rigorous procedure that leverages approximate sorting to compute the projection, which is particularly useful when solving a sequence of similar projection problems.
Numerical results show that our methods solve problems of scale $n=10^7$ and $k=10^4$ within $0.05$ seconds, whereas the most competitive alternative, \reftwo{the semismooth Newton-based method, takes about $1$ second.}
The existing grid-search method and Gurobi's QP solver can take from minutes to hours.
\end{abstract}

{\bf{Keywords}:} projection, top-$k$-sum, superquantile, $Z$-matrix

\section{Introduction}
\nosectionappendix
We consider the Euclidean projection  onto the  top-$k$-sum (also referred to as max-$k$-sum in various works such as  \cite{todd2018max,eigen2021topkconv,cucala2023correspondence}) sublevel set.
Specifically, 
given a scalar budget $r\in\R$, an index $k\in\{1,2,\ldots,n\}$, and an input vector $x^0\in\R^n$, our aim is to develop a fast and finite-termination algorithm
to obtain the exact solution of the strongly convex problem
\begin{equation}\label{eq:maxksum_projection}
\begin{array}{rl}
\displaystyle\operatornamewithlimits{minimize}_{x\in \mathbb{R}^n} \quad & \displaystyle\frac{1}{2}\norm{x-x^0}^2_2 \\
\mbox{subject to}
& x\in\maxsumball{k}{r} \coloneqq \bigl\{x\in\R^n : \displaystyle\maxsum_k{x} \coloneqq\sum_{i=1}^k \sorth{x}_{i}\leq r \bigr\},
\end{array}
\end{equation}
where for any $x\in \mathbb{R}^n$, we write  $\sorth{x}\in \mathbb{R}^n$ as its sorted counterpart satisfying $\sorth{x}_{1} \geq \sorth{x}_{2} \geq \cdots \geq \sorth{x}_{n}$, and
$\maxsum_k{x}$ represents the sum of the largest $k$ elements of $x$.

The top-$k$-sum operator $\maxsum_k{\bullet}$ is closely related to  the \emph{superquantile} of a random variable, which is also known as the \emph{conditional value-at-risk} (CVaR) \cite{rockafellar1999cvar}, \emph{average-top-$k$} \cite{hu2023rank},  \emph{expected shortfall}, among other names.
Specifically, consider $X$ as a random variable.
Its superquantile at confidence level $\tau\in (0,1)$ is defined as
$\superquantile_\tau{X}\coloneqq \min\bigl\{t+(1-\tau)^{-1}\mathbb{E}[\max(X-t,0)]\bigr\}$.
When $X$ is  supported on $n$ atoms $x \coloneqq (x_1, \ldots, x_n)^\top$, each with equal probability, then $ \maxsum_k{x}/k=\superquantile_{\tau(k)}{X}$
averages the largest $k$ realizations of $X$, where $\tau(k)\coloneqq 1-k/n$.
In the context where $X$ follows a continuous distribution, one may select $n$ samples
$x= \{x_i\}_{i=1}^n$ and construct an empirical sample average approximation of its superquantile at confidence $\tau=1-k/n$ using $\maxsum_k{x}$.

Owing to the close relationship between the top-$k$-sum and the superquantile, the projection problem in  \eqref{eq:maxksum_projection} has
applicability as a subroutine in solving composite optimization problems 
of the form
\vspace{-1mm}
\begin{equation}\label{eq:source_problems}
\begin{array}{rl}
\displaystyle\operatornamewithlimits{minimize}_{z} \quad \displaystyle f(z) + \superquantile_{\tau_0}[\big]{G^0(z;\omega_0)} 
\quad \mbox{subject to} \quad 
 \superquantile_{\tau}[\big]{G(z;\omega)} \leq r
\end{array}
\vspace{-2mm}
\end{equation}
\refthree{where $f$ is a deterministic function, $G^{0}$ and $G$ are random mappings that depend on both decision $z$ and random vectors $\omega_0$ and $\omega$ with finite supports, respectively; $r$ is the sublevel-set parameter; and $\tau_0,\tau$ are superquantile confidence parameters.}
Problem \eqref{eq:source_problems} addresses the empirical or sample average approximation of risk-averse CVaR problems, commonly used in safety-critical applications to manage adverse outcomes, such as in the robust design of complex systems
\cite{dahlgren2003risk,dolatabadi2017stochastic,tavakoli2018resilience,rabih2020distributionally,chaudhuri2022certifiable}.
Additionally, such problems  arise from the convex approximation of chance constrained stochastic programs \cite{nemirovski2006convex,chen2010cvar}, and are relevant to matrix optimization problems involving a matrix's Ky-Fan norm \cite{overton1993optimality,wu2014moreau}, \ie the vector-$k$-norm of its (already sorted) singular values.
Recently, optimization problems involving superquantiles have attracted  significant attention in the machine learning community, proving instrumental in modeling problems which: (i) seek robustness against uncertainty, such as mitigating distributional shifts between training and test datasets \cite{laguel2021superquantiles}, or measuring robustness through probabilistic guarantees on solution quality \cite{robey2022probabilistic}; (ii) handle imbalanced data  \cite{yuan2020group,peng2022imbalanced}; or (iii) pursue notions of fairness \cite{williamson2019fairness,frohlich2022risk,liu2019human}. Interested readers are encouraged to consult a recent survey \cite{royset2022risk} for a comprehensive review of superquantiles.
A fast and reliable solver for computing the projection onto the top-$k$-sum \blue{sublevel set}, especially when dealing with a large number of samples,
is crucial for
first- or second-order methods to solve the large-scale, complex composite superquantile problem \eqref{eq:source_problems}.
\refthree{Interested readers are refereed to  \cite{roth2024fast} for a recent study   addressing this problem, which requires the projection oracle \eqref{eq:maxksum_projection} in each iteration of the augmented Lagrangian method.}

Given that problem \eqref{eq:maxksum_projection} is a strongly convex quadratic program, it accommodates the straightforward use of  off-the-shelf solvers, such as Gurobi, to compute its solution. However, numerical experiments indicate that Gurobi needs about $1$-$2$ minutes at best
to solve problems of size $n=10^7$, thus preventing its use as an effective subroutine  within an iterative approach to solve the composite problem \eqref{eq:source_problems};
see \cref{tab:time} in Section \ref{sec:experiments} for details of the numerical results. In addition,
generic quadratic programming solvers yield inexact solutions.
This can lead to
a challenge in precisely determining the (generalized) Jacobian associated with the projection operator \refone{(see \cite{wu2014moreau,roth2024fast})}, which may be needed in a second-order method to solve composite problems like \eqref{eq:source_problems}
To overcome these issues, 
a finite-termination, grid-search-inspired method is introduced in \cite{wu2014moreau}, which has a complexity of $O(k(n-k))$ for a sorted $n$-dimensional input vector.
In the context of composite problems such as
\eqref{eq:source_problems},
the value $k$ is usually set as a fixed proportion of $n$, \ie $k=\floor{(1-\tau) n}$ for an  exogenous risk-tolerance $\tau\in(0,1)$, resulting in $O(n^2)$ complexity in many practical instances. Consequently, adopting such a method to evaluate the projection repeatedly in an iterative algorithm to solve composite problem
\eqref{eq:source_problems}
is still prohibitively costly when $n$ is large (say in the millions), even if $\tau$ is close to $1$.

\blue{On the other hand, problem \eqref{eq:maxksum_projection} is a special case of the vector-$k$-norm problem studied in \cite{wu2014moreau}, which is a special case of the OWL-norm projection problem studied in \cite{zeng2014ordered,davis2015algorithm,li2021fast}.
The paper
\cite{zeng2014ordered} outlines a scalar rootfinding routine that was not shown to terminate finitely.
An $O(\blue{n}+D\log D)$ implementation of an algorithm is provided in \cite{davis2015algorithm}, where $D$ denotes the number of unique elements in the input vector $x^0$.
A minor extension of the analysis in \cite[Theorem 3.2]{davis2015algorithm} indicates that the procedure can be modified to yield an $O(n)$ method for solving the fully sorted vector-$k$-norm problem, but the implementation is nontrivial to modify.
Finally, \cite{li2021fast} introduced a finite termination semismooth Newton method that exhibited superior performance relative.
However, a potential limitation shared by each of these proposed methods is that a full sort of the input vector $x^0$ seems unavoidable.
}

Lastly, the top-$k$-sum projection problem \eqref{eq:maxksum_projection} is related to the isotonic projection problem
$\min_{x\in\R^n} \{\tfrac12\norm{x-x^0}_{\reftwo{2}} : x_i\geq x_{j},\;\forall (i,j)\in E\}$,
where $E\subseteq \{1,\ldots,n\}\times\{1,\ldots,n\}$ is comprised of the edges of a directed acyclic graph over $n$ nodes.
When $E=\{(i,i+1):1\leq i\leq n-1\}$ forms a chain \cite{bach2013learning}, then the constraint set can be represented by a polyhedral cone known as a (monotone) \emph{isotonic projection cone} \cite{isac1986isotoneprojectioncone,nemeth2012monotonecone} for fully sorted $x^0\in\R^n$. This problem can be solved in $O(n)$ complexity by the well-known \emph{pool adjacent violators algorithm} \cite{barlow1972pava} or its primal variant \cite{best1990isotonic}.
The constraint in our problem  \eqref{eq:maxksum_projection} can be viewed as the intersection of the isotonic constraints and a half space (see the formulation \eqref{eq:maxksum_projection_sort} in the next section). Unfortunately, the projection onto this intersection cannot be done sequentially onto the latter two sets, thereby necessitating  a specialized approach.

Given this context, the primary contribution of this paper is to provide an efficient oracle for obtaining an exact solution to the top-$k$-sum \blue{sublevel set} projection problem \eqref{eq:maxksum_projection}.
We propose two finite-termination algorithms, one is based on \blue{specializing} a pivoting method to solve the parametric linear complementarity problem for $Z$-matrices \cite{cottle2009lcp}
\blue{(see also \cite{pang1980parametric} for an application to a more general problem of the portfolio selection)}%
, and the other is  a variant of a grid-search based method introduced in \cite{wu2014moreau} that we call \emph{early-stopping grid-search}.
Both methods have complexities of $O(n)$ for a sorted input vector $x^0\in\mathbb{R}^n$, where the absorbed constant is independent of $k$. 
When the input vector is unsorted, an additional $O(\bar{k}_1\log n)$ floating point operations are needed, where $\bar{k}_1\leq n$ is not known \emph{a priori} but can be determined dynamically.
To reduce this additional cost  for practical applications in which the procedure to solve \eqref{eq:maxksum_projection} is called repeatedly in an iterative method, 
we further derive a property of the projection  that can make use of the approximate permutation of $x^0$ (\eg from the previous iteration). 
Extensive numerical results show that our solvers are often (multiple) orders magnitude faster than (and never slower than) the grid-search method introduced in \cite{wu2014moreau}, \reftwo{the semismooth Newton method in \cite{li2021fast}}, and Gurobi's inexact QP solver. The \verb|Julia| implementation of our methods is available at \url{https://github.com/jacob-roth/top-k-sum}.

The remainder of the paper is organized as follows.
In \cref{sec:background}, we summarize several equivalent formulations for solving \eqref{eq:maxksum_projection}.
In \cref{sec:proposed_algorithm}, we present a parametric-LCP (PLCP) algorithm 
and a new early-stopping grid-search (ESGS) algorithm. 
We also present modifications of these algorithms to handle the vector-$k$-norm projection problem.
Proofs in the preceding two sections are deferred to \cref{apx:backgroundformulations,,apx:proposedalgorithms}; additional detail on the proposed methods is collected in \cref{apx:detail}.
We compare the numerical performance of PLCP and ESGS with existing projection oracles on a range of problems in \cref{sec:experiments}. The paper ends with a concluding section. %

\subsection*{Notation and preliminaries}
For a matrix $A$, the submatrix formed by the rows in an index set $\mathcal{I}$ and the columns in an index set $\mathcal{J}$ is denoted $A_{\mathcal{I},\mathcal{J}}$, where ``$:$'' denotes MATLAB notation for index sets, \eg $\mathcal{I}=1:n$.
The vector of all ones in dimension $n$ is denoted by $\ind_n$; for an index set $\mathcal{I}\subseteq\{1,\ldots,n\}$, $\ind_{\mathcal{I}}$ denotes the vector with ones in the indices corresponding to $\mathcal{I}$ and zeros otherwise; \reftwo{when clear from context, for example $k\leq n$, we abuse notation and use $\R^n\ni\ind_k\coloneqq(\ind_{1:k},0_{k+1:n})$}; and $e^i$ denotes the $i^{\text{th}}$ standard basis vector.
For a vector $x\in\R^n$, $x_i$ denotes the $i^{\text{th}}$ element of $x$, and
for a vector $\nu$, $\prescript{}{v}{\nu}$ denotes the position of $v$ in $\nu$ (so that, \eg $\prescript{}{x_i}{x}=i$).
For a vector $x\in\R^n$, $\sorth{x}$ denotes a nonincreasing rearrangement of $x$ with the convention that $\sorth{x}_0\coloneqq +\infty$ and $\sorth{x}_{n+1}\coloneqq -\infty$.
For any \refthree{sorted} $x^0\in\R^n$ \refthree{and any positive integer $k$}, we may assume without loss of generality that there exist integers $k_0,k_1$ satisfying $0\leq k_0\leq k-1$ and $k\leq k_1\leq n$ such that
\begin{equation}
  \label{eq:order_structure}
  \sorth x^0_1 \geq\cdots\geq \sorth x^0_{k_0} > \sorth x^0_{k_0+1} =\cdots= \sorth x^0_k =\cdots= \sorth x^0_{k_1} > \sorth x^0_{k_1+1} \geq\cdots\geq \sorth x^0_{n},
\end{equation}
with the convention that $k_0=0$ if $\sorth x^0_1=\sorth x^0_k$ and $k_1=n$ if $\sorth x^0_n = \sorth x^0_k$.
\refthree{Note that the presence of strict inequalities is not limiting: for example, if $x=\ind$, then we may take $k_0=0$ and $k_1=n$.}
The indices $(k_0,k_1)$ denote the \emph{index-pair of $x^0$ associated with $k$} and define the related sets $\alpha\coloneqq\{1,\ldots,k_0\}$, $\beta\coloneqq\{k_0+1,\ldots,k_1\}$ and $\gamma\coloneqq\{k_1+1\ldots,n\}$.
For a vector $x^0\in\R^n$, the inequality $x^0\geq0$ is understood componentwise, and $x^0\odot y^0$ denotes the Hadamard product of $x^0$ and $y^0$.
The binary operators ``$\land$'' and ``$\lor$'' represent ``logical and'' and ``logical or,'' respectively.
\refthree{Algorithmic ``complexity'' refers to floating point operations.}

We also recall the following concepts from convex analysis.
The indicator function $\delta_{S}(x)$ of a set $S\subseteq\R^n$ takes the value 0 if $x\in S$ and $+\infty$ otherwise; the support function is denoted by $\sigma_S(x^0)\coloneqq\sup_{y}\{\inner{x^0}{y} : y\in S\}$; and $\proj_{S}{x^0} = \prox_{\delta_S}{x^0}$ denotes the metric projection and proximal operators, respectively.

Next we recall standard notation from the literature of the linear complementarity problem (LCP) \cite{cottle2009lcp}.
An LCP$(q,M)$, defined by  vector $q$ and matrix $M$, is the collection of all vectors  $z$ such that $0\leq Mz + q\perp z\geq 0$ where ``$\perp$'' denotes orthogonality.
Given a scalar parameter $\lambda$, the parametric LCP (PLCP) is a collection of LCPs represented by $\text{PLCP}(\lambda; q,d,M) = \{\text{LCP}(q+\lambda d,M) : \lambda\in\R\}$ with $d$ being a direction vector.
Finally, we say $M\in \mathbb{R}^{n\times n}$ is a  $Z$-matrix if all  off-diagonal elements are nonpositive \cite{cottle2009lcp}.

\section{Equivalent Formulations and Existing Techniques}
\label{sec:background}
\begin{toappendix}
  \label{apx:backgroundformulations}
\end{toappendix}
In this section, we review some equivalent formulations of the projection problem \eqref{eq:maxksum_projection} and motivate the form that our algorithms use.
In particular, the alternate formulations either (i) introduce additional variables that destroy desirable structure in the original problem; (ii) have structure that we do not presently know how to leverage in designing a finite termination algorithm with complexity independent of parameter $k$; or (iii) are no easier than the formulation we use.
Before studying the projection problem in greater detail, we note that there are at least two cases where  the solution of this projection problem is immediate (assuming that $x^0$ does not belong to the top-$k$-sum \blue{sublevel set}): (i) $k=1$: $\tilde{x}_i = \min\{r,x^0_i\}$
; and (ii) $k=n$: $\tilde{x}_i = x^0_i - (\ind_n^\top x^0-r)/n$.

\subsection{Unsorted formulations}

\subsubsection*{The Rockafellar-Uryasev formulation}
Using the Rockafellar-Uryasev \cite{rockafellar1999cvar} variational form of the superquantile, the projection problem \eqref{eq:maxksum_projection} for $x^0\in\R^n$ can be cast as a convex quadratic program (QP) subject to linear constraints with $n+1$ auxiliary variables:
\begin{align}
  \label{eq:maxksum_projection_RU}
  (\tilde{x},\tilde{t},\tilde{z}) = \argmin_{x,t,z} \Bigl\{
    \tfrac12\norm{x-x^0}_2^2 :
    t + \frac1k\sum_{i=1}^n z_i\leq r/k,\;
    z\geq x - t\ind,\;
    z\geq0
  \Bigr\}.
\end{align}
The introduction of $t$ and $z$ destroys strong convexity of the original problem.
Nonetheless, the interior-point method can be used to obtain a solution in polynomial time.
As an alternative, problem \eqref{eq:maxksum_projection_RU} can be solved via the solution method of the linear complementarity problem (LCP) \cite{guler1995generalized,cottle2009lcp}.

\subsubsection*{The unsorted top-$k$ formulation}
Given $x^0\in\R^n$, let $\kappa$ denote the (possibly unsorted) indices of the $k$ largest elements of $x^0$
and $[k]\coloneqq \prescript{}{(\sorth{x}^0_k)}{x^0}$ denote the position of the $k^{\text{th}}$ largest element of $x^0$.
Then $x^0_{[k]}$ and index $[k]$ can be identified in $O(n + k\log n)$ expected time using \texttt{quickselect} \reftwo{(see \cite{hoare1961find})} or in $O(k + (n-k)\log k)$ time using \reftwo{\texttt{heapsort} (see \cite{williams1964heapsort}) and the max-heap data structure, as well as Algorithm 2 in \cite{condat2016fast}}, and a second scan through $x^0$ can identify $\kappa$.
Given the indices $\kappa$ and $[k]$, consider the following problem
\begin{subequations}
  \label{eq:maxksum_projection_unsorted}
  \begin{align}
    \tilde{x} &= \argmin_x\bigl\{
      \tfrac12\norm{x-x^0}_2^2 : Bx\leq b
    \bigr\},\quad\mbox{where}\; b\coloneqq(r,0_{n-1}^\top)^\top\;\mbox{and}\\
    B&\coloneqq \begin{bmatrix} \ind_{\kappa}^\top \\ B'_{\mathcal{I}_1,:} \\ B'_{\mathcal{I}_2,:} \end{bmatrix},\;
    (\ind_{\kappa})_j = \indicator_{t\in\kappa}{j},\;
    B'_{ij}\coloneqq
    \begin{cases}
      +1,\quad&\text{if $(i\in\kappa) \land (j=[k])$ or if $(i\notin\kappa) \land (j=i)$}\\
      -1,\quad&\text{if $(i\in\kappa) \land (j=i)$ or if $(i\notin\kappa) \land (j=[k])$}
    \end{cases},
  \end{align}
\end{subequations}
with $B'\in\R^{(n-1)\times n}$, $\mathcal{I}_1 = \{i\in\{1,\ldots,n\}:i\in\kappa \setminus [k]\}$, and $\mathcal{I}_2 = \{i\in\{1,\ldots,n\}:i\notin\kappa\}$.
In general, the feasible region $\bigl\{x \in\R^n :
 \sum_{i\in\kappa} x_i\leq r,\;
 x_i\geq x_{[k]},\;\forall i \in \kappa,\;
 x_j\leq x_{[k]},\;\forall j \notin \kappa
\bigr\}$ is a strict subset of $\maxsumball{k}{r}$, but the optimal solutions must coincide, as summarized in the following simple result.
\begin{lemmarep}\label{lem:maxksum_projection_unsorted}
  The optimal solution of problem \eqref{eq:maxksum_projection} is the same as that of the unsorted top-$k$ problem \eqref{eq:maxksum_projection_unsorted}.
\end{lemmarep}
\begin{proof}
  Equivalence follows from a direct application of the observation that for any $y,z\in\R^n$,
  \begin{align*}
    \inner{y}{z}\leq \inner{\sorth{y}}{\sorth{z}}\tag{$*$},
  \end{align*}
  and equality holds if and only if there exists a permutation $\pi$ that simultaneously sorts $y$ and $z$, \ie $y_\pi = \sorth{y}$ and $z_\pi=\sorth{z}$.

  Because the objectives are strongly convex, both problems have unique solutions $\tilde{x}^{\eqref{eq:maxksum_projection}}$ and $\tilde{x}^{\eqref{eq:maxksum_projection_unsorted}}$.
  Let $\mathcal{F}_{\eqref{eq:maxksum_projection}}$ and $\mathcal{F}_{\eqref{eq:maxksum_projection_unsorted}}$ denote the feasible regions of the projection problems, and $\nu_{\eqref{eq:maxksum_projection}}$ and $\nu_{\eqref{eq:maxksum_projection_unsorted}}$ denote the optimal values.
  It is clear that $\mathcal{F}_{\eqref{eq:maxksum_projection_unsorted}}\subseteq \mathcal{F}_{\eqref{eq:maxksum_projection}}$ (in general, strict subset), so $\nu_{\eqref{eq:maxksum_projection}}\leq \nu_{\eqref{eq:maxksum_projection_unsorted}}$.
  On the other hand, by $(*)$ both $\tilde{x}^{\eqref{eq:maxksum_projection}}$ and $\tilde{x}^{\eqref{eq:maxksum_projection_unsorted}}$ must have the same ordering as $x^0$, and thus $\tilde{x}^{\eqref{eq:maxksum_projection}}\in\mathcal{F}_{\eqref{eq:maxksum_projection_unsorted}}$, so $\nu_{\eqref{eq:maxksum_projection_unsorted}}\leq \nu_{\eqref{eq:maxksum_projection}}$ by the optimality of $\tilde{x}^{\eqref{eq:maxksum_projection_unsorted}}$ for \eqref{eq:maxksum_projection_unsorted} and the fact that both problems share the same objective function.
  Therefore the problems are equivalent.
\end{proof}
The KKT conditions expressed in (monotone) LCP form are given by $0\leq  (b - Bx^0) + BB^\top z \perp z \geq0$,
where $z\geq0$ is the dual variable.
\reftwo{Aside from the positive definiteness of $BB^\top$, it is not clear if there is additional structure that can be used to design an efficient LCP solution approach.}

\subsubsection*{The unsorted top-$k$ formulation via the Moreau decomposition}
On the other hand, the Moreau decomposition $x^0 = \prox_{\delta_{\maxsumball{k}{r}}\!\!}{x^0} + \prox_{\delta^*_{\maxsumball{k}{r}}\!\!}{x^0}$ provides an \reftwo{alternative} formulation to compute a solution from $\tilde{x}=x^0-\prox_{\delta^*_{\maxsumball{k}{r}}\!\!}{x^0}$.
The conjugate function $\delta^*_{\maxsumball{k}{r}}$ can be computed easily by using properties of linear programs and is summarized in \cref{lem:conjugate_unsorted}.
Note that the following result is also useful for computing the dual objective value of a problem involving the top-$k$-sum \blue{sublevel set}.
\begin{lemmarep}
  \label{lem:conjugate_unsorted}
  Let $B$ be the unsorted-top-$k$ matrix defined in \eqref{eq:maxksum_projection_unsorted} and $c\in\R^n$ be arbitrary.
  Then
  \begin{align*}
    \delta^*_{\maxsumball{k}{r}}(c) = \begin{cases} \tfrac rk\ind^\top c,\quad &\text{if $B^{-\top}c\geq0$;}\\+\infty,\quad&\text{otherwise.}\end{cases}
  \end{align*}
  In addition, the condition $B^{-\top}c\geq0$ can be checked in $O(n + k + (n-k)\log k)$ time for the worst case and $O(n + k\log n)$ \reftwo{time} in expectation.
  \blue{Furthermore, a sufficient condition for $B^{-\top}c\geq0$ can be checked in $O(n)$ time.}
\end{lemmarep}
  \begin{proof}
    For $c\in\R^n$, we have for $B$ and $b$ defined in \eqref{eq:maxksum_projection_unsorted}
    \begin{align*}
      \delta^*_{\maxsumball{k}{r}}(c) &= \max_x\{c^\top x - \delta_{\maxsumball{k}{r}}(x)\} = \max_x\{c^\top x : Bx \leq b\} &(\text{definition})\\
      &= \max_y\{c^\top B^{-1}y : y\leq b\} = \max_y\{(B^{-\top} c)^\top y : y\leq b\}. &(\text{$B$ is invertible})
    \end{align*}
    Suppose that there is an index $i^*$ such that $(B^{-\top}c)_{i^*}<0$.
    Then by taking $y_{i^*}\downarrow-\infty$, the objective tends to $+\infty$.
    When $B^{-\top}c\geq0$, then the problem has an optimal (finite) solution that occurs at an extreme point, \ie $y^*=b$, with objective value $c^\top B^{-1}b = \tfrac rk\ind^\top c$ by direct verification.

    Next consider verifying the condition $B^{-\top}c\geq0$.
    First identify the index $[k]=\prescript{}{c_{[k]}}{c}$ (the index of the $k^{\text{th}}$ largest element of $c$) in $O(k+(n-k)\log k)$ by using max-heaps or $O(n)$ expected time using \texttt{quickselect}.
    Next, scan $c$ to identify the elements and $\kappa = \{i\in\{1,\ldots,n\} : c_i \geq c_{[k]}\}$, ensuring that $\abs{\kappa}=k$ (ties can be split arbitrarily) in $O(n)$ time.
    Then, the form of $B^{-1}$ can be verified to take the form
    \begin{align*}
      B^{-1} =
      \begin{array}{ccccc}
        i=[k] & i\in\kappa\setminus[k] & i\notin\kappa\\
        \bigl[\tfrac 1k \ind & V & W \bigr]
      \end{array}
    \end{align*}
    where $V$ is a matrix of columns of the form $\tfrac1k\ind - e^i$ for $i\in\kappa\setminus[k]$ for standard basis vector $e^i$ and $W$ is a matrix of columns of the form $e^i$ for $i\notin\kappa$.
    Thus $B^{-\top}c\geq 0$ can be checked in $O(n + k+(n-k)\log k)$ time or $O(n)$ expected time by checking (i) $c_i\geq0$ for $i\notin\kappa$; and (ii) $\tfrac1k \ind^\top c - c_i\geq0$ for $i\in\kappa\setminus[k]$.
    \blue{As a consequence of the form of $B^{-1}$, a sufficient condition for $B^{-\top}c\geq0$ is: $c\geq0$ and $c_i\leq \tfrac{1}{k}\ind^\top c$ for all $i\in\{1,\ldots,n\}$, which can be checked in $O(n)$ time by disregarding the identification of $\kappa$.}
  \end{proof}
By \cref{lem:conjugate_unsorted}, we can compute $\tilde{y} :=\prox_{\delta^*_{\maxsumball{k}{r}}\!\!}{x^0}$ using the following formulation
\begin{align}
  \tilde{y} = \argmin_{y\in\R^n} \bigl\{\tfrac12\norm{y-(x^0-\tfrac rk \ind)}_2^2 : B^{-\top}y \geq0 \bigr\},
\end{align}
with KKT conditions $0\leq B^{-\top}(x^0-\tfrac rk\ind) + B^{-\top}B^{-1}z \perp z\geq0$ for dual variable $z\geq0$.
The matrix $B^{-\top}B^{-1}$ shares properties similar to $BB^\top$, 
but the structure cannot be leveraged in an obvious manner.

\subsection{Sorted formulations}

\subsubsection*{The isotonic formulation}
An alternative approach adopted in \cite{wu2014moreau} is based on the  observation employed in \cref{lem:maxksum_projection_unsorted}:
if the initial input to the projection problem \eqref{eq:maxksum_projection} is sorted in a nonincreasing order, \ie $x^0 = \sorth{x}^0$, then the unique solution will also be sorted in a nonincreasing order, \ie $\tilde{x}_i\geq\tilde{x}_{i+1}$ for $i =1,\ldots,n-1$.
Thus \eqref{eq:maxksum_projection} is equivalent to
\begin{equation}\label{eq:maxksum_projection_sort}
\begin{array}{rl}
\displaystyle\operatornamewithlimits{minimize}_{x\in \mathbb{R}^n} \quad & \displaystyle\frac{1}{2}\norm{x-\sorth{x}^0}^2_2 \\
\mbox{subject to}
&\begin{rcases*}
    \displaystyle \sum_{i=1}^k x_i \leq r\\\vspace{2mm}
    x_i\geq x_{i+1},\;\forall i\in\{1,\ldots,n-1\}
\end{rcases*}
 \eqqcolon \{x\in\R^n : Cx\leq b\},
\end{array}
\end{equation}
where
  \begin{align}
    \label{eq:maxksum_projection_sort_polyhedron}
    b &\coloneqq (r,0_{n-1}^\top)^\top,\quad
    C \coloneqq \begin{bmatrix} (\ind_k^\top,\; 0_{n-k}^\top) \\ -D \end{bmatrix},\quad
    \begingroup
    \renewcommand*{\arraystretch}{-0.1}
    D\coloneqq\begin{bmatrix} +1 & -1\\ & {\ddots} & {\ddots} \end{bmatrix}\in\R^{(n-1)\times n}
    \endgroup,
  \end{align}
which consists of nonseparable isotonic constraints ($Dx\geq0$) and a single budget constraint ($\ind_k^\top x \leq r$).
Problems \eqref{eq:maxksum_projection} and \eqref{eq:maxksum_projection_sort} are equivalent in the sense that $\bar x$ is the solution to \eqref{eq:maxksum_projection_sort} if and only if there exists a permutation $\pi$ of $\{1,\ldots,n\}$ with inverse $\pi^{-1}$ such that $\bar x_{\pi^{-1}} = \tilde x$ is the solution to \eqref{eq:maxksum_projection}.
The solution to \eqref{eq:maxksum_projection_sort} is obtained by translating three contiguous subsets of $\sorth{x}^0$, as depicted in \cref{fig:mks_diagram}, and it obeys the same ordering as $\sorth{x}^0$.
The difficulty is in identifying the breakpoints $k_0$ and $k_1$ at the solution that satisfies \eqref{eq:order_structure}.

\begin{figure}
  \centering
  \resizebox*{\linewidth}{!}{%
  \begin{tikzpicture}[x=5cm,y=1cm]
  \def\xzerosort{{%
    2.0,%
    1.95,%
    1.8,%
    1.5,%
    1.3,%
    0.6,%
    0.4,%
    0.3,%
    0.05,%
    -0.1,%
    -0.2,%
    -0.4,%
    -0.5,%
  }}
  \def\lambda{0.5}
  \def\xbarsort{{%
    2.0-\lambda,%
    1.95-\lambda,%
    1.8-\lambda,%
    1.5-\lambda,%
    1.3-\lambda,%
    0.25,%
    0.25,%
    0.25,%
    0.05,%
    -0.1,%
    -0.2,%
    -0.4,%
    -0.5,%
  }}
  
  \foreach \i in {1,2,3,9,10,11}{
    \node[minimum width=0.6cm,draw,circle,fill=black,opacity=0.6] (a) at (\xzerosort[\i],+1) {};
    \node[minimum width=0.6cm,draw,circle,fill=blue,opacity=0.6] (b) at (\xbarsort[\i],-1) {};
    \draw[->, ultra thick, dotted] (a) -- (b);
  }
  \foreach \i in {0}{
    \node[minimum width=0.6cm,draw,circle,fill=black,opacity=0.6,label={[label distance=0.2cm]90:$\sorth{x}^0_{1}$}] (a) at (\xzerosort[\i],+1) {};
    \node[minimum width=0.6cm,draw,circle,fill=blue,opacity=0.6,label={[label distance=0.2cm]270:$\bar{x}_{1}$}] (b) at (\xbarsort[\i],-1) {};
    \draw[->, ultra thick, dotted] (a) -- (b);
  }
  \foreach \i in {4}{
    \node[minimum width=0.6cm,draw,circle,fill=black,opacity=0.6,label={[label distance=0.2cm]90:$\sorth{x}^0_{\bar{k}_0}$}] (a) at (\xzerosort[\i],+1) {};
    \node[minimum width=0.6cm,draw,circle,fill=blue,opacity=0.6,label={[label distance=0.2cm]270:$\bar{x}_{\bar{k}_0}$}] (b) at (\xbarsort[\i],-1) {};
    \draw[->, ultra thick, dotted] (a) -- (b);
  }
  \foreach \i in {8}{
    \node[minimum width=0.6cm,draw,circle,fill=black,opacity=0.6, label={[label distance=0.2cm]90:$\sorth{x}^0_{\bar{k}_1+1}$}] (a) at (\xzerosort[\i],+1) {};
    \node[minimum width=0.6cm,draw,circle,fill=blue,opacity=0.6, label={[label distance=0.2cm]270:$\bar{x}_{\bar{k}_1+1}$}] (b) at (\xbarsort[\i],-1) {};
    \draw[->, ultra thick, dotted] (a) -- (b);
  }
  \foreach \i in {12}{
    \node[minimum width=0.6cm,draw,circle,fill=black,opacity=0.6, label={[label distance=0.2cm]90:$\sorth{x}^0_{n}$}] (a) at (\xzerosort[\i],+1) {};
    \node[minimum width=0.6cm,draw,circle,fill=blue,opacity=0.6, label={[label distance=0.2cm]270:$\bar{x}_{n}$}] (b) at (\xbarsort[\i],-1) {};
    \draw[->, ultra thick, dotted] (a) -- (b);
  }
  
  \node[minimum width=0.6cm,draw,circle,fill=black,opacity=0.6, label={[label distance=0.2cm]90:$\sorth{x}^0_{\bar{k}_0+1}$}] (a1) at (\xzerosort[5],+1) {};
  \node[minimum width=0.6cm,draw,circle,fill=black,opacity=0.6, label={[label distance=0.2cm]90:$\sorth{x}^0_{k}$}] (a2) at (\xzerosort[6],+1) {};
  \node[minimum width=0.6cm,draw,circle,fill=black,opacity=0.6, label={[label distance=0.2cm]90:$\sorth{x}^0_{\bar{k}_1}\quad$}] (a3) at (\xzerosort[7],+1) {};
  \node[minimum width=0.6cm,draw,circle,fill=blue,opacity=0.6] (b) at (\xbarsort[5],-1) {};
  \draw[->, ultra thick, dotted] (a1) -- (b);
  \draw[->, ultra thick, dotted] (a2) -- (b);
  \draw[->, ultra thick, dotted] (a3) -- (b);
  
  \draw[-, ultra thick, red, opacity=0.5] (2.09, 1.5) rectangle (0.71,-1.5);
  \draw[-, ultra thick, orange, opacity=0.5] (0.69,1.5) rectangle (0.16,-1.5);
  \draw[-, ultra thick, green!50!black, opacity=0.5] (0.14, 1.5) rectangle (-0.59,-1.5);
  \matrix [below left] at (2.4,0.825) {
    \node [shape=rectangle, draw=red, line width=1, opacity=0.5, label=right:$\alpha$] {};\\%
    \node [shape=rectangle, draw=orange, line width=1, opacity=0.5, label=right:$\beta$] {};\\%
    \node [shape=rectangle, draw=green!50!black, line width=1, opacity=0.5, label=right:$\gamma$] {};\\%
  };
  \end{tikzpicture}  
  }
  \caption{\small Schematic of sorted input $\sorth x^0\in\R^n$ (grey, top) and sorted projection $\bar{x}\in\maxsumball{k}{r}$ (blue, bottom).}
  \label{fig:mks_diagram}
\end{figure}

The constraint matrix $C$ associated with the sorted problem \eqref{eq:maxksum_projection_sort_polyhedron} is readily seen to be invertible,
and inspection of the KKT conditions yields the LCP($q,M$) with data
  \begin{align}
  \label{eq:mks_lcp_primal}
    q&\coloneqq (r,0_{n-1}^\top)^\top - C\, \sorth{x}^0\in\R^n, \quad M\coloneqq CC^\top\in\R^{n\times n}.
  \end{align}
By direct computation, $CC^\top$ is a symmetric positive definite $Z$-matrix (\ie a $K$-matrix \cf \cite[Definition 3.11.1]{cottle2009lcp}), so Chandrasekaran's complementary pivoting method \cite{chandrasekaran1970special} can process the LCP in at most $n$ steps (also see the \emph{$n$-step scheme} summarized  in \cite[Algorithm 4.8.2]{cottle2009lcp}).
The matrix $M$ is also seen to be tridiagonal except for possibly the first row and first column, due to contributions from the budget constraint.
As in the unsorted case, using the Moreau decomposition does not further simplify the problem.

\subsubsection*{The KKT grid-search}
\label{sec:kkt_grid}
An alternative method for solving the sorted problem \eqref{eq:maxksum_projection_sort} is based on a careful study of the sorted problem's KKT conditions  introduced in \cite{wu2014moreau}.
It is shown (\cf Step 2 in Algorithm 4 in \cite{wu2014moreau}) that each $(k_0,k_1)\in\{0,\ldots,k-1\}\times\{k,\ldots,n\}$ defines a candidate primal solution.
The true solution can be recovered by performing a grid-search over the sorting-indices ${k}_0$ and ${k}_1$ and terminating once the KKT conditions have been satisfied.
For each $(k_0,k_1)$, the KKT conditions can be checked in constant time that is independent of $n$ and $k$, so the overall complexity is $O(k(n-k))$.

\section{Proposed Algorithms}
\label{sec:proposed_algorithm}
\begin{toappendix}
  \label{apx:proposedalgorithms}
\end{toappendix}
In this section, we describe two efficient procedures for solving the projection problem \eqref{eq:maxksum_projection} \blue{by viewing the top-$k$-sum sublevel set as the intersection of a summation constraint and an ordering constraint}.
The first method is a (dual) parametric pivoting procedure based on the $Z$-matrix structure of the sorted problem's KKT conditions, \blue{which ensures that iterates are sorted and terminates once the summation constraint is satisfied.}
The second method \blue{uses a detailed analysis of the KKT conditions to} refine the (primal) grid-search procedure introduced in \cite{wu2014moreau}, \blue{which ensures that the summation constraint is satisfied (with equality) and terminates once the ordering constraint is satisfied}.

\subsection{A parametric-LCP approach}
\label{sec:plcp}
\reftwo{Penalizing the summation constraint in \eqref{eq:maxksum_projection_sort} with $\lambda\geq 0$ in the objective yields a sequence of subproblems parametrized by $\lambda$ that can be handled by he parametric-LCP (PLCP) approach described in \cite[Section 4.5]{cottle2009lcp}.
Given a candidate solution that fails to satisfy both constraints simultaneously, the PLCP prescribes two forms of corrective actions: increasing the penalty parameter $\lambda\geq0$ and/or expanding the set of active isotonic constraints.
The former action is moderated by the need to seek a sorted ordering, which limits how much $\lambda$ can be increased and results in a piecewise affine solution mapping with respect to $\lambda$.
Due to the structure of the isotonic constraints, both actions can be implemented in constant time, and at most $n$ adjustments to $\lambda$ will be needed since the basis cannot contain more than $n$ variables.~\footnote{
After posting the first version of the manuscript online in October 2023, the authors became aware, through private communication, of independent work by Eric Sager Luxenberg on a similar approach to solve problem \eqref{eq:maxksum_projection}.}
}

For fixed $\lambda$, the subproblem is given by
\begin{equation}\label{eq:plcp_primal}
\begin{array}{rl}
\displaystyle\operatornamewithlimits{minimize}_{x\in \mathbb{R}^n} \quad & \displaystyle\frac{1}{2}\norm{x-\sorth{x}^0}^2_2 + \lambda\cdot\bigl(\ind_k^\top x - r\bigr)\\[0.1in]
\mbox{subject to}
& Dx\geq0,
\end{array}
\end{equation}
where $D$ is defined in \eqref{eq:maxksum_projection_sort_polyhedron} as the isotonic operator associated with the ordered, consecutive  differences in $x$.
Collecting these problems for $\lambda\geq0$ yields the PLCP($\lambda;q,d,M$) where $M\coloneqq DD^\top$ is a tridiagonal positive definite $Z$-matrix.
To solve \eqref{eq:maxksum_projection_sort}, it is sufficient to identify a $\lambda \geq 0$  so that (i) the budget constraint is satisfied; and (ii) the LCP optimality conditions associated with \eqref{eq:plcp_primal} hold.
Let $z\geq0_{n-1}$ be the dual variable associated with $Dx\geq0$.
The KKT conditions of \eqref{eq:plcp_primal} take the form of
\begin{align}
  \label{eq:plcp_dual_kkt}
  0\leq w:=D(\sorth{x}^0 - \lambda\ind_k + D^\top z) \perp z \geq0, 
\end{align}
and yield the following PLCP($\lambda;q,d,M$) formulation of the full projection problem \eqref{eq:maxksum_projection_sort}, with PLCP data:
\begin{align}
  \label{eq:plcp}
  M &\coloneqq DD^\top,\quad q\coloneqq D\,\sorth{x}^0\geq0, \quad
  d \coloneqq -D\ind_k = -e^k.
\end{align}
One can compute that
\begin{align}
  M = \begin{bmatrix} 2 & -1\\ -1 & 2 & -1\\ & \ddots & \ddots & \ddots\\ & & -1 & 2 & -1\\ & & & -1 & 2\end{bmatrix}\in\R^{(n-1)\times (n-1)},
\end{align}
which is a  positive definite $Z$-matrix. By increasing the parameter $\lambda$ from $0$ to $+\infty$, one can solve the problem by identifying the optimal basis $\xi\subseteq\{1,\ldots,n-1\}$ such that the budget constraint is satisfied, $z_{\xi^\complement}=0$ (where $\xi^\complement = \{1,\ldots,n-1\}\setminus\xi$), and $w_{\xi}=0$.
Due to a special property of the $Z$-matrix \refone{that ensures monotonicity of the solution $z$ as a function of $\lambda$ \cite[Discussion of Proposition 4.7.2]{cottle2009lcp}}, the unique solution $z(\bar\lambda)$ can be obtained by solving at most $n$ subproblems.
Finally, since $M$ is tridiagonal, each subproblem for a fixed $\lambda$ can be solved in $O(1)$, and a primal solution can be recovered from the optimal dual vector by $\bar{x} = \sorth{x}^0 - \lambda\ind_k + D^\top z(\bar\lambda)$, leading to an $O(n)$ method where the absorbed constant is independent of $k$.

\begin{algorithm}[ht!]
	\caption{\small PLCP projection onto $\maxsumball{k}{r}$}\label{alg:projection_maxksum_lcp}
	\begin{algorithmic}[1]
            \footnotesize
		\Initialize{
      \noindent
      \begin{enumerate}[label=(\roman*),topsep=0pt,itemsep=-1ex,partopsep=1ex,parsep=1ex]
        \item If $\maxsum_k{ \sorth{x}^0}\leq r$, return $\bar{x}=\sorth{x}^0$.
        \item Otherwise, if $k=1$, set $\bar{x} = \min\{x^0,r\}$.
        \item Otherwise, if $k=n$, set $\bar{x} = \sorth{x}^0 - \ind_n\cdot(\ind_n^\top x^0-r)/n$.
        \item Otherwise, handle iteration $t=0$.
        \begin{enumerate}[topsep=0pt,itemsep=-1ex,partopsep=1ex,parsep=1ex]
          \item Set $s^0=\sum_{i=1}^k x_i^0$, $t=0$, $\xi=\emptyset$, $q_k=x^0_k-x^0_{k+1}$, $\lambda=q_k$, and $m=s^0 - k \lambda$.
          \item If $m \leq r$, set $\bar\lambda = (s^0 - r)/k$ and $\bar x = x^0 - \bar\lambda\ind_k$.
          \item Otherwise, set $t=1$, $\xi=\{k\}$, $a=b=s=k$, $\prescript{}{a}{\xi}=\prescript{}{k}{\xi}=\prescript{}{b}{\xi}=1$, $\lambda^a=\lambda^b=\lambda$, $z^0_{a}=z^0_k=z^0_{b}=-q_k\cdot m_{ij}^{-1}(\prescript{}{k}{\xi},\prescript{}{k}{\xi})=-q_k/2$, $\sigma=2-m^{-1}_{ij}(\prescript{}{k}{\xi},\prescript{}{k}{\xi})=3/2$, define the function $m_{ij}^{-1}(\prescript{}{c}{\xi},\prescript{}{d}{\xi}) \coloneqq (M_{\xi\xi}^{-1})_{\prescript{}{c}{\xi},\prescript{}{d}{\xi}} = \bigl(\abs{\xi}+1 - \max\{\prescript{}{c}{\xi},\prescript{}{d}{\xi}\} \bigr) \cdot \min\{\prescript{}{c}{\xi},\prescript{}{d}{\xi}\} \cdot \bigl(\abs{\xi}+1\bigr)^{-1}$,
          and proceed.
        \end{enumerate}
      \end{enumerate}
    }
		\While{true}
      \State{Compute the $(t+1)^{\text{th}}$ breakpoint:}
      \Nest
        \State{$\hphantom{\lambda^a}\mathllap{\lambda^a} = (q_{a-1} - z^0_a) / m_{ij}^{-1}(\xi_{k},\xi_{a})$}
        \State{$\hphantom{\lambda^a}\mathllap{\lambda^b} = (q_{b+1} - z^0_b) / m_{ij}^{-1}(\xi_{k},\xi_{b})$}
        \State{$\hphantom{\lambda^a}\mathllap{\lambda} = \min\bigl\{\lambda^a,\lambda^b\bigr\}$}
      \EndNest
      \State{Compute $\maxsum_k{x(\lambda)}$ to determine if optimal solution lies within $(0,\lambda]$:}
        \Nest
          \State{$T = s^0 - k\cdot\lambda + z^0_{k} + m_{ij}^{-1}(\prescript{}{k}{\xi},\prescript{}{k}{\xi})\cdot\lambda$}\Comment{compute $\maxsum_k{x(\lambda)}$.}
          \If{$T\leq r$}
            \State{$\bar\lambda = (s^0 - r + z_{k})/\bigl(k - m_{ij}^{-1}(\prescript{}{k}{\xi},\prescript{}{k}{\xi})\bigr)$}\Comment{solve $\maxsum_k{x(\lambda)}=r$ for $\lambda$.}
            \State{\Return{$\bar x = \sorth{x}^0 - \bar\lambda\ind_k + D^\top z(\bar\lambda)$} by calling \cref{alg:Btz}}
          \EndIf
        \EndNest
      \State{Update $z(0)$ via Schur complement, and update $\xi$ by inspecting $\{\lambda^a,\lambda^b\}$:}
        \Nest
          \If{$\lambda=\lambda^a$}
            \State{$z_a^0 = (z_a^0 - q_{a-1})/\sigma$}
            \State{$\hphantom{z_a^0}\mathllap{z_k^0} = z_k^0 + z_a^0\cdot m_{ij}^{-1}(\prescript{}{k}{\xi},\prescript{}{a}{\xi})$}
            \State{$\hphantom{z_a^0}\mathllap{z_b^0} = z_b^0 + z_a^0\cdot m_{ij}^{-1}(\prescript{}{b}{\xi},\prescript{}{a}{\xi})$}
            \State{$\hphantom{z_a^0}\mathllap{a} = a-1, \quad \prescript{}{k}{\xi} = \prescript{}{k}{\xi}+1, \quad \prescript{}{b}{\xi} = \prescript{}{b}{\xi}+1$}\Comment{$\xi=(a-1,\xi)$.}
          \Else
            \State{$\hphantom{z_a^0}\mathllap{z_b^0} = (z_b^0 - q_{b+1})/\sigma$}
            \State{$z_a^0 = z_a^0 + z_b^0\cdot m_{ij}^{-1}(\prescript{}{a}{\xi},\prescript{}{b}{\xi})$}
            \State{$\hphantom{z_a^0}\mathllap{z_k^0} = z_k^0 + z_b^0\cdot m_{ij}^{-1}(\prescript{}{k}{\xi},\prescript{}{b}{\xi})$}
            \State{$\hphantom{z_a^0}\mathllap{b}=b+1, \quad \prescript{}{b}{\xi} = \prescript{}{b}{\xi}+1$}\Comment{$\xi=(\xi,b+1)$.}
          \EndIf
        \EndNest
      \State{Increment iteration:}
      \Nest
        \State{$t = t+1$ and $\sigma = (t+2)/(t+1)$}\Comment{Schur complement: $\sigma=2 - m^{-1}_{ij}(\prescript{}{a}{\xi},\prescript{}{a}{\xi})$.}
      \EndNest
    \EndWhile
\end{algorithmic}
\end{algorithm}

\begin{algorithm}[ht!]
	\caption{\small $O(\abs{\xi})$ update of $y\gets y+D_{:,\xi}^\top z_\xi$ from optimal basis elements $a,b,\prescript{}{k}{\xi}$, dual $\lambda$, and $\sorth{x}^0\in\R^n$.}\label{alg:Btz}
	\begin{algorithmic}[1]
    \footnotesize
    \Initialize{Set $m=b-a+1$, $c=0$, $a_i=0$.}
		\State{Compute $\texttt{cumsum}((1,2,\ldots,m) \odot \texttt{reverse}(-q^\lambda_\xi))$:\Comment{\texttt{cumsum} is cumulative sum and \texttt{reverse} reverses the order of a vector}}
    \Nest\vspace{-2mm}
      \For{$i \in \{1,\ldots,m\}$}
        \State{$j = b-i+1$}
        \State{$c = c + i \cdot \bigl( -(\sorth{x}^0_j - \sorth{x}^0_{j+1}) \bigr)$}
        \IIf{$m-i+1 = \prescript{}{k}{\xi}$}; $c = c +\lambda$; \EndIIfi
      \EndFor
      \State{$c = c / (m+1)$ and $y_a = y_a + c$}
    \EndNest
    \State{Compute $\texttt{cumsum}((1,2,\ldots,m) \odot -q^\lambda_\xi)$:}
    \Nest\vspace{-1mm}
      \For{$i \in \{2,\ldots,m+1\}$}
        \State{$j=a+i-2$}
        \State{$c = c - \bigl( -(\sorth{x}^0_j - \sorth{x}^0_{j+1}) \bigr)$}
        \IIf{$i-1 = \prescript{}{k}{\xi}$}; $c = c + \lambda$; \EndIIfi
        \State{$y_{j+1} = y_{j+1} + c$}
      \EndFor
    \EndNest
	\end{algorithmic}
\end{algorithm}

The parametric-LCP method specialized to the present problem is summarized in \cref{alg:projection_maxksum_lcp}.
Additional detail on the mechanics of the pivots is provided in \cref{apx:detail:plcp}.
In addition, it is worthwhile to point out that to avoid additional memory allocation, the full dual solution $z$ is not maintained explicitly throughout the algorithm (only three components, $z_a$, $z_b$, and $z_k$ are maintained), so the step in line 10 of \cref{alg:projection_maxksum_lcp} requires a simple but specialized procedure summarized in \cref{alg:Btz} to implicitly reconstruct $z$ and compute $D^\top z$.
This can be done in $O(\abs{\xi})$ cost given an optimal basis $\xi$ as outlined in \cref{lem:Btz}.
On the other hand, if the user is willing to store the full dual vector $z$, then by storing and filling in the appropriate entries of $z$ after each pivot (line 23 in \cref{alg:projection_maxksum_lcp}\footnote{
  The algorithm computes $z^0 \coloneqq z(0)$ rather than $z(\lambda)$; the desired vector can be obtained from $z_\xi(\lambda) =z_\xi^0 + \lambda M_{\xi\xi}^{-1}e^k_\xi$ given basis $\xi$.
}), it is clear that $D^\top z$ can be obtained in $O(\abs{\xi})$ cost due to the simple pairwise-difference structure of $D$ and the complementarity structure which gives $z_{\xi^\complement}=0$.
Finally, we formally state the $O(n)$ complexity result, preceded by a lemma whose proof is deferred to \cref{apx:proposedalgorithms:plcp}.

\begin{toappendix}
  \subsection{PLCP}
  \label{apx:proposedalgorithms:plcp}
\end{toappendix}
\begin{lemmarep}
  \label{lem:Btz}
  \cref{alg:Btz} computes $D^\top z$ in $O(\abs{\xi})$ from initial data $\sorth{x}^0$ and optimal output of \cref{alg:projection_maxksum_lcp}: $a$, $b$, $\prescript{}{k}{\xi}$, and $\lambda$.
\end{lemmarep}
\reftwo{Using the above lemma, we arrive at the folowing conclusion.}
\begin{proof}
  Let $\xi$ \blue{be a given contiguous subset} with $\abs{\xi}=m$ and suppress the dependence on $\xi$.
  The goal is to compute $D_{\xi,:}^\top z_\xi$ where $z_\xi = -M_{\xi\xi}^{-1}(q_\xi + \lambda d_\xi)$.
  For convenience, we drop the dependence on $\xi$.
  The matrix $M^{-1}=(DD^\top)^{-1}$ is completely dense and symmetric with (lower triangular) entries given by
  \begin{align*}
    (DD^\top)^{-1}_{\lowerrighttriangle} = \frac{1}{m+1}
    \begin{bmatrix}
      1\cdot m\\
      1\cdot(m-1) & 2\cdot(m-1)\\
      1\cdot(m-2) & 2\cdot(m-2) & \ddots\\
      \vdots & \vdots & & \ddots\\
      1\cdot2 & 2\cdot2\\
      1\cdot1 & 2\cdot1 & & \cdots & m\cdot1
    \end{bmatrix}\in\R^{m\times m}.
  \end{align*}
  By direct computation,
  \begin{align*}
    D^\top(DD^\top)^{-1} &=
    \frac{1}{m+1}
    \begin{bmatrix}
      m & m-1 & m-2 & \cdots & 2 & 1\\
       -1 & m-1 & m-2 & \cdots & 2 & 1\\
       -1 &  -2 & m-2 & \cdots & 2 & 1\\
       \vdots & \vdots & \vdots & \ddots & \vdots & 1\\
       -1 &  -2 & -3 & \cdots & 2 & 1\\
       -1 &  -2 & -3 & \cdots & -(m-1) & 1\\
      \myhrule
       -1 &  -2 & -3 & \cdots & -(m-1) & -m
    \end{bmatrix}\in\R^{(m+1) \times m}.
  \end{align*}
  Then it is clear that $D^\top(DD^\top)^{-1}v$ computes the difference of two cumulative sum vectors of $v$, $c^1 = \texttt{cumsum}((1,2,\ldots,m) \odot \texttt{reverse}(v))$ and $c^2 = \texttt{cumsum}((1,2,\ldots,m) \odot v)$, \ie $\bigl(D^\top(DD^\top)^{-1}v\bigr)^\top= (c^1,0) - (0,c^2)$, where $\texttt{cumsum}$ denotes the cumulative sum operation and \texttt{reverse} reverses the order of a vector.
\end{proof}

\begin{proposition}
  The overall complexity to solve the sorted problem \eqref{eq:maxksum_projection_sort} by \cref{alg:projection_maxksum_lcp}  is $O(n)$.
\end{proposition}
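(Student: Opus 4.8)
The plan is to charge the running time to the three phases of \cref{alg:projection_maxksum_lcp} separately: (i) forming the PLCP data, (ii) the parametric pivoting loop, and (iii) recovering the primal iterate, and to show each phase costs $O(n)$ with an absorbed constant independent of $k$. For phase (i), the data in \eqref{eq:plcp} is $M = DD^\top$, $q = D\,\sorth{x}^0$, and $d = -e^k$; since $D$ is bidiagonal, $q$ is simply the vector of consecutive differences of $\sorth{x}^0$ and is computed in $O(n)$, while $M$ is the fixed tridiagonal matrix displayed above and need never be materialized. Hence initialization is $O(n)$, and only a single coordinate of $d$ is nonzero, which already signals why $k$ will not enter the constant.

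For phase (ii) I would combine two ingredients. First, because $M = DD^\top$ is a symmetric positive definite $Z$-matrix, i.e., a $K$-matrix, Chandrasekaran's complementary pivoting method \cite{chandrasekaran1970special} (equivalently, the $n$-step scheme \cite[Algorithm 4.8.2]{cottle2009lcp}) processes the LCP in at most $n-1$ pivots; together with the monotonicity of the solution $z$ as a function of $\lambda$ \cite[Discussion of Proposition 4.7.2]{cottle2009lcp}, the parametric sweep $\lambda\uparrow+\infty$ encounters at most $n-1$ breakpoints and never revisits a basis, so the outer loop runs $O(n)$ times. Second, each iteration costs $O(1)$: the perturbation direction is $d=-e^k$, the basic solution on a basis $\xi$ solves a system that is tridiagonal (indeed a union of consecutive blocks once index $k$ is deleted), whose only relevant entries are the three scalars $z_a$, $z_b$, $z_k$ maintained by the algorithm and which update under a pivot by closed-form formulas, and the minimum-ratio test that locates the next breakpoint inspects only a constant number of entries adjacent to the current block boundaries (the details of these pivots being supplied in \cref{apx:detail:plcp}). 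Thus phase (ii) is $O(n)\cdot O(1)=O(n)$.

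For phase (iii), given the terminal basis $\xi$ and parameter $\bar\lambda$, the primal solution is $\bar{x} = \sorth{x}^0 - \bar\lambda\ind_k + D^\top z(\bar\lambda)$. By \cref{lem:Btz}, realized through \cref{alg:Btz}, the vector $D^\top z(\bar\lambda)$ is reconstructed from the three stored components in $O(\abs{\xi}) \le O(n)$ time, and forming the length-$n$ sum is $O(n)$. Summing the three phases yields the claimed $O(n)$ bound; and since $d=-e^k$ touches only one coordinate and enters the per-pivot arithmetic and the ratio test through at most a single comparison, the constant does not depend on $k$.

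I expect phase (ii) to be the main obstacle. The delicate point is transcribing the $n$-step/$K$-matrix finite-termination guarantee into the parametric setting — tracking precisely how the basis $\xi$ (a union of consecutive blocks with a hole at $k$) evolves as $\lambda$ increases, and confirming that no basis recurs — and then verifying that the specialized $O(1)$ updates to $z_a$, $z_b$, $z_k$ faithfully implement the abstract pivots and ratio tests. Once the correctness and monotone structure developed in \cref{sec:plcp} and \cref{apx:detail:plcp} are in place, the complexity accounting above is routine.
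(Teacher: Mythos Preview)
Your three-phase accounting matches the paper's proof almost exactly: the paper likewise cites the monotonicity of $z(\lambda)$ for $K$-matrices to bound the number of breakpoints by $n$, argues that each of the three per-iteration steps (next breakpoint, budget check, solution update) costs $O(1)$ thanks to the tridiagonal $M$ and the sparse $d=-e^k$, and then invokes the $O(\lvert\bar\xi\rvert)$ primal recovery via \cref{lem:Btz}.

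One point of your write-up is off, though, and would trip you up if you actually tried to verify the $O(1)$ pivot cost rather than defer it to the appendix. You describe the basis $\xi$ as ``a union of consecutive blocks with a hole at $k$'' and the basic system as ``a union of consecutive blocks once index $k$ is deleted.'' That is backwards: the PLCP starts with $\xi^1=\{k\}$ and, by the monotonicity you invoke, only \emph{adds} indices thereafter; the minimum-ratio test in \eqref{eq:plcp:minratiotest} shows the only candidates to enter are $a-1$ and $b+1$, so $\xi$ stays a single contiguous block $\{a,\ldots,b\}$ that \emph{contains} $k$. This is precisely why $M_{\xi\xi}$ is tridiagonal with the explicit inverse \eqref{eq:Mxixi_inv}, why the ratio test reduces to two comparisons, and why the Schur-complement update touches only the three scalars $z_a,z_k,z_b$. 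With this corrected picture, your phase-(ii) argument goes through exactly as in the paper; without it, the claim that the ratio test inspects ``entries adjacent to the current block boundaries'' lacks justification, since a non-contiguous $\xi$ would have more than two boundary indices.
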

\begin{proof}
  
  We first cite a classical result regarding the number of pivots needed to identify the optimal basis.
  By \cite[Theorem 2]{cottle1972monotone}, for every $q\geq0$ and for every $d$, since $M$ is a symmetric, positive definite $Z$-matrix, the solution map ${z}(\lambda)$ is a point-to-point,  nondecreasing and  convex piecewise-linear function of $\lambda$ such that ${z}(\lambda_1)\leq {z}(\lambda_2)$ for $0\leq\lambda_1\leq\lambda_2$.
  Therefore, once a variable $z_i$ becomes basic, it remains in the basis $\xi$ for every subsequent pivot.
  Since each $z_i$ is monotone nondecreasing and piecewise-linear, the basis $\xi^t$ in iteration $t$ remains optimal over an interval $[\lambda^{t},\lambda^{t+1}]$.
  As a result, %
  the interval $[0,+\infty)$ can be partitioned into $n$ pieces $S^0\cup\cdots\cup S^{n-1}$ with progressively larger bases ($\abs{\xi^0}\leq\cdots\leq\abs{\xi^{n-1}}$) such that the solution $z(\lambda^t)$ satisfies the LCP optimality conditions \eqref{eq:plcp} for all $\lambda^t\in S^t$.   Since the constraints in \eqref{eq:maxksum_projection_sort} are linearly independent, the  optimal dual variables are unique and in particular the optimal $\bar{\lambda}$ is finite in \eqref{eq:plcp_primal}.
  
  Given solution $z(\lambda^t)$ in iteration $t$, each subsequent iteration performs three steps: (i) determining the next breakpoint $\lambda^{t+1}$ (lines 2-5); (ii) checking whether or not the budget constraint is satisfied for some $\lambda\in[\lambda^t,\lambda^{t+1}]$ (lines 7-10); and (iii) updating the solution $z(\lambda^{t+1})$ for the new breakpoint $\lambda^{t+1}$ if the budget constraint is not satisfied (lines 12-23).
  Each step can be performed in $O(1)$ cost due to the tridiagonal structure of $M$ and the sparse structure of $d$.
  Detailed justification relies on the Sherman-Morrison and Schur complement identities and is provided in \cref{apx:detail:plcp}.
  Finally, the cost required to recover the primal solution from optimal dual $z(\bar\lambda)$ with basis $\bar\xi$ via $\bar{x} = \sorth{x}^0 - \ind_k\lambda + D^\top z(\bar\lambda)$ is readily seen to be $O(\abs{\bar\xi})$ by the structure of $D$, using \cref{lem:Btz}.
  Thus, the overall complexity is $O(n)$.
\end{proof}

\subsection{An early-stopping grid-search approach}

Since our second algorithm depends on the framework described in \cite{wu2014moreau}, we reproduce some background.
Recall that the constraint $\maxsum_k{x}\leq r$ can be represented by finitely many linear inequalities,
so the following KKT conditions are necessary and sufficient for characterizing the unique solution $\bar x$ and its multiplier $\bar \lambda$ to the sorted problem \eqref{eq:maxksum_projection_sort}:
\begin{subequations}
  \label{eq:kkt_maxksum_projection_sort}
  \begin{align}
    \bar x &= \sorth{x}^{0} - \bar\lambda \mu\;\;\text{for some $\mu\in\partial\maxsum_k{\bar x}$},\\
    0 &\leq [\, r-\maxsum_k{\bar x}\, ] \perp \bar\lambda \geq0.
  \end{align}
\end{subequations}
Recall the index-pair $(k_0,k_1)$ of $x^0$ associated with $k$ in \eqref{eq:order_structure} and its related sets
\begin{align}
  \alpha\coloneqq\{1,\ldots,k_0\},\quad\beta\coloneqq\{k_0+1,\ldots,k_1\},\quad\gamma\coloneqq\{k_1+1,\ldots,n\}.
\end{align}
For any $x^0\in\R^n$ where $\sorth x^0$ satisfies the order structure \eqref{eq:order_structure}, Lemma 2.2 in \cite{wu2014moreau} gives
\begin{align}\label{eq:maxksum_subdifferential}
  \partial\maxsum_k{\sorth x^0} = \bigl\{\mu\in\R^n :\; 
    \mu_\alpha=\ind_{\abs{\alpha}},\;
    \mu_\beta \in \phi_{k_1-k_0,k-k_0},\;
    \mu_\gamma=0
  \bigr\},
\end{align}
where for $m_1\geq m_2$,
\begin{align}\label{eq:phi}
  \phi_{m_1,m_2} \coloneqq \bigl\{w\in\R^{m_1} : 0\leq w\leq\ind,\;\ind^\top w = m_2\}\bigr. .
\end{align}
The procedure in \cite{wu2014moreau} utilizes \eqref{eq:maxksum_subdifferential} to design a finite-termination algorithm that finds a subgradient $\bar\mu$ with indices $\bar k_0$ and $\bar k_1$ so that $\bar\mu\in\partial\maxsum_k{\bar x}$.

\subsubsection{KKT conditions}
\label{sec:esgs_kkt_conditions}
To solve the KKT conditions \eqref{eq:kkt_maxksum_projection_sort}, there are two cases.
If $\maxsum_k{x^0}\leq r$, then $\bar x = \sorth{x}^0$ and $\bar \lambda=0$ is the unique solution.
On the other hand, if $\maxsum_k{x^0}>r$, then $\maxsum_k{\bar x} = r$ and $\bar\lambda>0$.
This holds because if $\bar\lambda=0$, then $\bar x = \sorth{x}^0$ by stationarity, resulting in contradiction: by assumption, $\maxsum_k{x^0}>r$, but by primal feasibility $\maxsum_k{\sorth{x}^0} = \maxsum_k{\bar x}\leq r$.

Let us now focus on the solution method for the second case where $\maxsum_k{x^0}>r$.
The KKT conditions \eqref{eq:kkt_maxksum_projection_sort} can be expressed as
\begin{align}
  \label{eq:kkt_maxksum_projection_sort_2}
  \begin{array}{r*{1}{wc{0mm}}lcr*{1}{wc{0mm}}ll}
    \bar x_{\bar\alpha} &=& \sorth x^0_{\bar\alpha} - \bar\lambda\ind_{\bar\alpha} &\quad & \bar \mu_{\bar\alpha} &=& \ind_{\bar\alpha}\\*
    \bar x_{\bar k_0} &>& \bar\theta > \bar x_{\bar k_1+1} &\quad & \bar \mu_{\bar\beta} &\in& \phi_{\bar k_1-\bar k_0,k-\bar k_0}\\*
    \bar x_{\bar\beta} &=& \bar\theta\ind_{\bar\beta} &\quad & \bar \mu_{\bar\gamma} &=& 0\\*
    r &=& \ind_{\bar\alpha}^\top \sorth x^0_{\bar\alpha} - \bar k_0\bar \lambda + (k-\bar k_0)\bar\theta &\quad & \bar\lambda &>& 0
  \end{array}
\end{align}
for appropriate indices $\bar k_0$ and $\bar k_1$.
Based on \eqref{eq:maxksum_subdifferential}, any candidate index-pair $(k_0',k_1')$ gives rise to a candidate primal solution, which we denote by $x'(k_0',k_1')$, by solving a 2-dimensional linear system in variables $(\theta',\lambda')$ derived from summing the $\alpha'$ and $\beta'$ components of the stationarity conditions; see \cref{apx:detail:esgs} for more detail.
Where it is clear from context, we simplify notation by dropping the dependence on $(k_0',k_1')$ and let $x'$ denote $x'(k_0',k_1')$.
On the other hand, we overload notation to let $\lambda(k_0',k_1')$ and $\theta(k_0',k_1')$ denote the solution to the linear system associated with index pair $(k_0',k_1')$.
Using the form of the KKT conditions, we can recover the candidate primal solution $x'$ as follows:
\begin{subequations}
  \label{eq:candidate_x_k0k1}
  \begin{align}
    x'_{\alpha'} &= \sorth x_{\alpha'}^0 - \lambda'\ind_{\alpha'},\;\;
    x'_{\beta'} = \theta'\ind_{\beta'},\;\;
    x'_{\gamma'} = \sorth x_{\gamma'}^0\\*
    \label{eq:candidate_thetalambdarho_k0k1}
    \theta' &= \Big(k_0' \ind_{{\beta'}}^\top \sorth x_{\beta'}^0 - (k-k_0')\bigl( \ind_{{\alpha'}}^\top \sorth x_{\alpha'}^0 - r \bigr)\Big) / \rho'\\*
    \lambda' &= \Big( (k-k_0')\ind_{{\beta'}}^\top \sorth x^0_{\beta'} + (k_1'-k_0') \bigl( \ind_{\alpha'}^\top \sorth x_{\alpha'}^0 - r \bigr) \Big) / \rho'\\*
    \rho' &= k_0'(k_1'-k_0') + (k-k_0')^2.
  \end{align}
\end{subequations}

Based on \eqref{eq:kkt_maxksum_projection_sort_2}, the candidate solution $(x',\lambda',\theta')$ is optimal if and only if the following five conditions hold:
\begin{subequations}
  \label{eq:maxksum_projection_kkt}
	\begin{align}
		\lambda'>0,\label{eq:maxksum_projection_kkt1}\\*
		\sorth x_{k_0'}^0>\theta'+\lambda',\label{eq:maxksum_projection_kkt2}\\*
		\theta'+\lambda'\geq \sorth x_{k_0'+1}^0,\label{eq:maxksum_projection_kkt3}\\*
		\sorth x_{k_1'}^0\geq\theta',\label{eq:maxksum_projection_kkt4}\\*
		\theta'>\sorth x_{k_1'+1}^0\label{eq:maxksum_projection_kkt5}.
	\end{align}
\end{subequations}
See \cref{apx:detail:esgs} for detail on the above step, which uses the form of the subdifferential from \eqref{eq:maxksum_subdifferential}.
To obtain a solution $(\bar x,\bar\lambda)$ to the KKT conditions \eqref{eq:kkt_maxksum_projection_sort}, it is sufficient to perform a grid search over $k_0\in \{0, \ldots, k-1\}$ and $k_1\in \{k, \ldots, n\}$ and check \eqref{eq:maxksum_projection_kkt}, which is the approach adopted in \cite[Algorithm 4]{wu2014moreau}.

On the other hand, detailed inspection of monotonicity properties of the reduced KKT conditions \eqref{eq:maxksum_projection_kkt} yield
an $O(n)$ primal-based procedure for solving the sorted problem \eqref{eq:maxksum_projection_sort} as outlined in \cref{alg:projection_maxksum_snake}.
Instead of executing a grid-search over $\{0, \ldots, k-1\}\times \{k, \ldots, n\}$, our procedure exploits the hidden properties of the KKT conditions.  We construct a path from $(k_0,k_1)=(k-1,k)$ to $(\bar{k}_0,\bar{k}_1)$, composed solely of decrements to $k_0$ and increments to $k_1$. This path maintains satisfaction of KKT conditions 1, 3, and 4,  and seeks a pair $(k_0, k_1)$ that  yields $(x,\lambda)$ satisfying complementarity, \ie KKT conditions 2 and 5.
In general, this procedure generates a different sequence of ``pivots'' from \cref{alg:projection_maxksum_lcp} and to the best of our knowledge does not exist in the current literature.

\subsubsection{Implementation}
The procedure, as summarized in \cref{alg:projection_maxksum_snake}, is very simple.
The algorithm's behavior is depicted in \cref{fig:esgs_schematic}, in which a path from $(k-1,k)$ to $(\bar{k}_0,\bar{k}_1)$ is generated by following the orange and blue arrows.
\refone{Due to the ordering properties of the problem, a sequence of index-pairs can be constructed in which $\eqref{eq:maxksum_projection_kkt1}\land\eqref{eq:maxksum_projection_kkt2}\land\eqref{eq:maxksum_projection_kkt3}$ always hold.
Since the optimal LCP basis is contiguous (as shown in \cref{apx:detail:plcp}) and again due to certain monotonicity properties, updates to the index-pair $(k_0,k_1)$ either decrement $k_0$ or increment $k_1$.
The former occurs when $\eqref{eq:maxksum_projection_kkt2}$ fails to hold, the latter occurs when $\eqref{eq:maxksum_projection_kkt5}$ fails to hold, and the algorithm terminates when both $\eqref{eq:maxksum_projection_kkt2}\land\eqref{eq:maxksum_projection_kkt5}$ hold.}

\begin{minipage}{0.52\textwidth}
  \begin{algorithm}[H]
    \caption{\small ESGS projection onto $\maxsumball{k}{r}$.}\label{alg:projection_maxksum_snake}
    \begin{algorithmic}[1]
      \Initialize{
        \noindent
        \begin{enumerate}[label=(\roman*),topsep=0pt,itemsep=-1ex,partopsep=1ex,parsep=1ex]
          \item If $\maxsum_k{ x^0}\leq r$, return $\bar{x}=x^0$.
          \item Else, if $k=1$, set $\bar{x} = \min\bigl\{x^0,r\bigr\}$.
          \item Else, if $k=n$, set $\bar{x} = x^0 - \ind_n\cdot(\ind_n^\top x^0-r)/n$.
          \item Else, set $k_0=k-1$, $k_1=k$, and $\texttt{solved}=0$.
        \end{enumerate}
      }
      \While{true}
        \State Compute candidate $\theta$ and $\lambda$ and KKT indicators:
        \Nest
          \State{$\hphantom{\theta+\lambda}\mathllap{\rho} = k_0(k_1-k_0) + (k-k_0)^2$}
          \State{$\hphantom{\theta+\lambda}\mathllap{\theta} = \bigl(k_0 \ind_{{\beta}}^\top  x_\beta^0 - (k-k_0)\bigl( \ind_{{\alpha}}^\top  x_\alpha^0 - r \bigr)\bigr)/\rho$}
          \State{$\hphantom{\theta+\lambda}\mathllap{\theta+\lambda} = \bigl( k\ind_{{\beta}}^\top  x^0_\beta + (k_1-k) \bigl( \ind_{\alpha}^\top  x_\alpha^0 - r \bigr) \bigr) / \rho$}
          \State{$\hphantom{\theta+\lambda}\mathllap{\ikkt_2} =  x_{k_0}^0 > \theta+\lambda,\quad \ikkt_5 = \theta >  x_{k_1+1}^0$}
        \EndNest
        \State Check KKT conditions:
        \Nest
          \IIf{$\ikkt_2\land\ikkt_5$} $\texttt{solved}=1$
          \ElseIIf{$\ikkt_2$} $k_1\gets k_1+1$
          \ElseIIf{$\neg\ikkt_2$} $k_0\gets k_0-1$
          \EndIIf
          \If{$\texttt{solved}=1$}
            \State \Return $\bar x$ via $\bar x_\alpha =  x_\alpha^0 - \lambda\ind_\alpha$, $\bar x_\beta = \theta\ind_\beta$, and $\bar x_\gamma =  x_\gamma^0$.
          \EndIf
        \EndNest
      \EndWhile
    \end{algorithmic}
  \end{algorithm}
\end{minipage}
\hfill
\begin{minipage}{0.47\textwidth}
  \begin{figure}[H]
    \centering
    \begin{tikzpicture}[scale=1.0]
      \begin{scope}[shift={(7,0)}]
        \begin{scope}[rotate=180]
          \draw[step=0.5cm,opacity=0.25] (0.4,0.4) grid (4,4);
  
          \draw[-,thick] (4.2,-0.75) -- (0.4,-0.75);
          \draw[dotted,thick] (0.4,-0.75) -- (0.1,-0.75);
          \draw[->,thick] (0.1,-0.75) -- (-0.7,-0.75) node[right]{$k_1$};
          \foreach \i/\j in {3.75/k,1.75/\bar{k}_1,-0.25/n}{
            \draw (\i,-0.7) -- ++ (0,-0.1) node[above=1mm] {$\j$};
          }
          
          \draw[-,thick] (4.2,-0.75) -- (4.2,0.1);
          \draw[dotted,thick] (4.2,0.1) -- (4.2,0.4);
          \draw[->,thick] (4.2,0.4) -- (4.2,4.2) node[below]{$k_0$};
          \foreach \i/\j in {-0.25/0,0.75/\bar{k}_0,3.75/k-1}{
            \draw (4.25,\i) -- ++ (-0.1,0) node[left=2mm] {$\j$};
          }
          
          \foreach\j in {0.0,0.5,...,4}{%
            \draw[dotted] (0.1,\j) -- (0.4,\j);
          }
          \foreach \i/\j [evaluate=\i as \x using ((4-\i)*8-3)/8, evaluate=\j as \y using ((4-\j)*8-3)/8] in {0/1,0.5/1.5,1/2.5,1.5/3}{%
            \fill[orange!90!black,opacity=0.75] (\x,\y) rectangle ++(2/8,2/8);
            \fill[black] (\x+1/8,\y+1/8) circle (0.05);
          }
          \foreach \i/\j [evaluate=\i as \x using ((4-\i)*8-3)/8, evaluate=\j as \y using ((4-\j)*8-3)/8] in {2.5/2.5,3/2.5} {
            \fill[orange!90!black,opacity=0.75] (\x,\y) rectangle ++(2/8,2/8);
          }
          \foreach \i/\j [evaluate=\i as \x using ((4-\i)*8-3)/8, evaluate=\j as \y using ((4-\j)*8-3)/8] in {0/0,0/0.5,0.5/1,1/1.5,1/2,1.5/2.5} {
            \fill[blue,opacity=0.5] (\x,\y) rectangle ++(2/8,2/8);
            \fill[black] (\x+1/8,\y+1/8) circle (0.05);
          }
          \foreach \i/\j [evaluate=\i as \x using ((4-\i)*8-2)/8, evaluate=\j as \y using ((4-\j)*8-2)/8] in {0/1,0.5/1.5,1/2.5,1.5/3} {
            \draw[->,line width=0.4mm,orange!60!black!60!red] (\x-1/16,\y) -- +(-3/8+1/16,0);
          }
          \foreach \i/\j [evaluate=\i as \x using ((4-\i)*8-2)/8, evaluate=\j as \y using ((4-\j)*8-2)/8] in {0/0,0/0.5,0.5/1,1/1.5,1/2,1.5/2.5} {
            \draw[->,line width=0.4mm,blue] (\x,\y-1/16) -- +(0,-3/8+1/16);
          }
          
          \foreach \i/\j [evaluate=\i as \x using ((4-\i)*8-3)/8, evaluate=\j as \y using ((4-\j)*8-3)/8] in {0.5/0,1.5/0.5,2/1,2/1.5,2/2,2/2.5,1/4} {
            \fill[purple,opacity=0.5] (\x,\y) rectangle ++(2/8,2/8);
          }
          
          \draw[dashed] (1.75,-0.5) -- (1.75,0.5+1/8);%
          \draw[dashed] (1.75,0.75+1/8) -- (1.75,4.0);%
          \draw[dashed] (4,0.75) -- (2.25,0.75);%
          \draw[dashed] (1.5+1/8,0.75) -- (-0.5,0.75);%
          \foreach \i/\j [evaluate=\i as \x using ((4-\i)*8-3)/8, evaluate=\j as \y using ((4-\j)*8-3)/8] in {2/3}{
            \fill[orange!90!black,opacity=0.75,thick] (\x,\y) -- (\x+2/8,\y) -- (\x+2/8,\y+2/8);
            \fill[purple,opacity=0.5,thick] (\x+2/8,\y+2/8) -- (\x,\y+2/8) -- (\x,\y);
            \draw[orange!90!black,thick] (\x,\y) -- (\x+2/8,\y) -- (\x+2/8,\y+2/8);
            \draw[purple,thick] (\x+2/8,\y+2/8) -- (\x,\y+2/8) -- (\x,\y);
            \draw[purple,thick] (\x+2/8,\y+2/8) -- (\x,\y);
            \fill[black] (\x+1/8,\y+1/8) circle (0.05);
          }
        \end{scope}
        \begin{scope}[rotate=0]
          \draw[step=0.5cm,opacity=0.25] (-0.1,-0.1) grid (0.5,0.5);
          \foreach\j in {0.5}{%
            \draw[dotted] (-0.4,\j) -- (-0.1,\j);
          }
        \end{scope}
        \begin{scope}[rotate=90]
          \draw[step=0.5cm,opacity=0.25] (-0.1,0.4) grid (0.5,4);
          \foreach\j in {0,0.5,...,4}{%
            \draw[dotted] (-0.4,\j) -- (-0.1,\j);
          }
        \end{scope}
        \begin{scope}[rotate=270]
          \draw[step=0.5cm,opacity=0.25] (0.4,-0.1) grid (4,0.5);
          \foreach\j in {0.5}{%
            \draw[dotted] (0.1,\j) -- (0.4,\j);
          }
          \foreach \i/\j [evaluate=\i as \x using ((4-\i)*8-3)/8, evaluate=\j as \y using ((4-\j)*8-3)/8] in {0.5/3.5} {
            \fill[orange!90!black,opacity=0.75] (\x,\y) rectangle ++(2/8,2/8);
          }
        \end{scope}
      \end{scope}
    \end{tikzpicture}
    \caption{
      \small 
      Schematic for \cref{alg:projection_maxksum_snake}.
      Orange shading indicates that $\ikkt_2\land\ikkt_3$ holds;
      blue shading indicates that $\neg\ikkt_2\land\ikkt_3$ holds;
      red shading indicates that $\ikkt_4\land\ikkt_5$ holds;
      black circles trace the trajectory taken by ESGS and indicate that $\ikkt_1\land\ikkt_3\land\ikkt_4$ holds.
    }
    \label{fig:esgs_schematic}
  \end{figure}
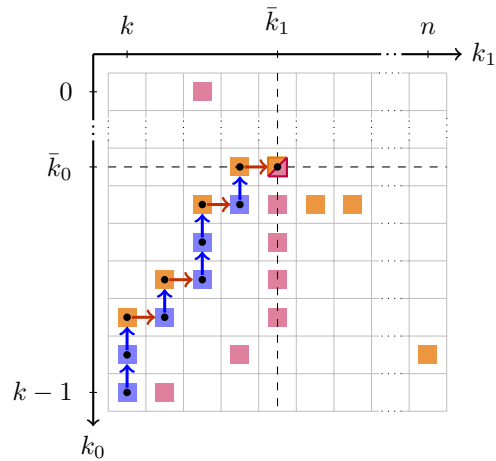
\end{minipage}

\subsubsection{Analysis}
\label{sec:esgs_analysis}
The analysis of \cref{alg:projection_maxksum_snake} builds on the framework introduced in \cite{wu2014moreau} for performing a grid search over candidate index-pairs $(k_0,k_1)$.
The grid search proceeds with outer loop over $k_1\in \{k, \ldots, n\}$ and inner loop over $k_0 \in \{0,\ldots,k-1\}$, though similar analysis holds when reversing the search order (but is not discussed further).
The new observation leveraged in \cref{prop:snake_complexity} is that the order structure of the KKT conditions induces various forms of monotonicity in the KKT residuals across $k_0$ and $k_1$.
This monotonicity holds globally (\ie across all $k_0$ for given $k_1$ or all $k_1$ for given $k_0$) and implies that the local KKT information at a candidate index-pair $(k_0,k_1)$ provides additional information about which indices can be ``skipped'' in the full grid search procedure.

For example, in the sample trajectory traced in \cref{fig:esgs_schematic}, the algorithm starts at $(k_0,k_1)=(k-1,k)$ and searches over all $k_0$ in the first column for an index-pair that satisfies KKT conditions 2 and 3.
Suppose that such an index is found in some row, which we denote by $k_0^*(k)$ to emphasize the dependence of such a row on the column $k_1=k$.
Further suppose that the pair $(k_0^*(k),k)$ does not satisfy \emph{all} of the KKT conditions.
Then monotonicity implies that no candidate index-pair $(k_0',k)$ for $k_0'\leq k_0^*(k)$ can satisfy \emph{all} of the KKT conditions, providing an early termination to the inner search over $k_0$ associated with column $k_1=k$.
This justifies terminating the current search over $k_0$ in column $k_1=k$ and increasing $k_1\gets k_1+1$.
At this point, the full grid search would begin again at $(k-1,k+1)$.
However, another monotonicity property of the KKT residuals implies that all $k_0'>k_0^*(k)$ cannot be optimal, which justifies starting the grid search ``late'' at $(k_0^*(k),k+1)$ rather than $(k-1,k+1)$.
Based on these ideas, the main effort in establishing the correctness of \cref{alg:projection_maxksum_snake} is in showing how to use monotonicity properties of the KKT residuals to justify transitions ``up'' or ``right''.
The first idea is summarized in \cref{lem:earlystop} and the second is summarized in \cref{lem:latestart}.
\cref{prop:snake_complexity} is obtained by combining these earlier results, immediately giving the desired complexity analysis.

We begin the analysis of \cref{alg:projection_maxksum_snake} by making the following assumption.
\begin{assumption}[Strict projection]
  \label{assn:strict_projection}
  For the initial vector $x^0\in\R^n$, it holds that $\maxsum_k{x^0}>r$.
\end{assumption}
For simplicity of notation, we also assume that the initial point is sorted $x^0=\sorth{x}^0$ and drop the sorting notation in the remainder of this section (and its proofs).

At a candidate index-pair $(k_0,k_1)$, define the KKT satisfaction indicators
\begin{subequations}
  \label{eq:kkt_surplus}
  \begin{alignat}{4}
    \ikkt_1(k_0,k_1) &\coloneqq \indicator_{t>0}[\big]{\lambda(k_0,k_1)},\\*
    \ikkt_2(k_0,k_1) &\coloneqq \indicator_{t>0}[\big]{x^0_{k_0} - (\theta+\lambda)(k_0,k_1)},\\*
    \ikkt_3(k_0,k_1) &\coloneqq \indicator_{t\geq0}[\big]{(\theta+\lambda)(k_0,k_1) - x^0_{k_0+1}},\\*
    \ikkt_4(k_0,k_1) &\coloneqq \indicator_{t\geq0}[\big]{x^0_{k_1} - \theta(k_0,k_1)},\\*
    \ikkt_5(k_0,k_1) &\coloneqq \indicator_{t>0}[\big]{\theta(k_0,k_1) - x^0_{k_1+1}},
  \end{alignat}
\end{subequations}
where $\indicator_{t\in S}{x} = \texttt{true}$ is $x\in S$ and \texttt{false} otherwise; and where $\lambda(\bullet,\bullet)$ and $\theta(\bullet,\bullet)$ denote the values of $\lambda$ and $\theta$ corresponding to a particular index-pair.
By \eqref{eq:maxksum_projection_kkt}, an index-pair $(k_0,k_1)$ is optimal if and only if
\begin{align}
  \ikkt_1(k_0,k_1)\land
  \ikkt_2(k_0,k_1)\land
  \ikkt_3(k_0,k_1)\land
  \ikkt_4(k_0,k_1)\land
  \ikkt_5(k_0,k_1).
\end{align}
Note that the presence of strict inequalities precludes arguments that appeal to linear programming.
Instead we conduct a detailed study of the KKT conditions directly.

We begin by arguing that starting from $(k_0,k_1)=(k-1,k)$, we may ``forget'' about checking conditions 1, 3, and 4 and instead only seek satisfaction of conditions 2 and 5.
We refer to \cref{fig:esgs_schematic} when referencing ``rows'' and ``columns'' of the search space.
We also delay verification of claims involving ``direct computation'' to \cref{apx:proposedalgorithms:esgs} but provide references in the text.

\begin{lemma}
  \label{lem:satisfy_134}
  Beginning from $(k_0,k_1)=(k-1,k)$, the trajectory taken by \cref{alg:projection_maxksum_snake} always satisfies KKT conditions 1, 3, and 4.
\end{lemma}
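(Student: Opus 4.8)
The plan is to track the algorithm's trajectory in the $(k_0,k_1)$-grid and show, by induction on the number of iterations, that the invariant ``$\ikkt_1 \land \ikkt_3 \land \ikkt_4$ holds at the current index-pair'' is preserved. The base case is the starting pair $(k_0,k_1)=(k-1,k)$; here I would verify directly from the closed forms in \eqref{eq:candidate_thetalambdarho_k0k1} that $\lambda(k-1,k)>0$, that $(\theta+\lambda)(k-1,k)\geq x^0_{k}$ (which is condition 3 with $k_0+1=k$), and that $x^0_{k}\geq \theta(k-1,k)$ (condition 4 with $k_1=k$). Under \cref{assn:strict_projection} we have $\maxsum_k{x^0}>r$, i.e.\ $\ind_\alpha^\top x^0_\alpha - r$ (with $\alpha=\{1,\dots,k-1\}$) plus $x^0_k$ exceeds $r$; combined with $\rho = k_0(k_1-k_0)+(k-k_0)^2 > 0$ this should make the sign computations routine. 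When $k_0=k-1$ the set $\beta$ is $\{k,\dots,k_1\}$, so $\ind_\beta^\top x^0_\beta$ collapses to a short sum; when additionally $k_1=k$, $\beta=\{k\}$ and $\theta,\lambda$ reduce to a genuinely one-line computation, so the base case is where I would be most concrete.

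For the inductive step there are two transitions to analyze, corresponding to the two branches of lines 13--14 of \cref{alg:projection_maxksum_snake}. If $\ikkt_2$ holds (but the pair is not yet optimal, so $\neg\ikkt_5$), the algorithm does $k_1 \gets k_1+1$; I must show $\ikkt_1 \land \ikkt_3 \land \ikkt_4$ still holds at $(k_0,k_1+1)$. Here $k_0$ is fixed and the only change is that $\beta$ gains the index $k_1+1$ and $\gamma$ loses it; since we are on a column where $k_0$ was \emph{not} decremented, the failure of $\ikkt_5$ means $\theta(k_0,k_1) \leq x^0_{k_1+1}$, i.e.\ the new would-be $\beta$-block member sits above the current plateau value, which is precisely the ordering needed for $\beta$ to absorb it consistently — this is the content that makes $\ikkt_3$ and $\ikkt_4$ survive the increment, and $\ikkt_1$ should follow from monotonicity of $\lambda$ in $k_1$. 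If instead $\neg\ikkt_2$, the algorithm does $k_0 \gets k_0-1$; now $k_1$ is fixed, $\alpha$ loses its last index and $\beta$ gains it at the top. The failure of $\ikkt_2$ says $x^0_{k_0} \leq (\theta+\lambda)(k_0,k_1)$, meaning the index being pushed from $\alpha$ into $\beta$ has original value at most the current ``$\alpha$-level'' $\theta+\lambda$; this is the ordering that lets $\beta$ absorb it while preserving $\ikkt_3$ (the new $(\theta+\lambda)$ at $(k_0-1,k_1)$ must still dominate $x^0_{k_0}$, the new $k_0+1$ entry), and $\ikkt_4$ is unaffected by a change at the $k_0$ end while $\ikkt_1$ again follows by a monotonicity-in-$k_0$ argument. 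In both cases the needed monotonicity statements are exactly the ``direct computation'' facts the paper defers to \cref{apx:proposedalgorithms:esgs}, so I would invoke those.

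The main obstacle, I expect, is the bookkeeping that shows each transition \emph{re-establishes} the conditions at the \emph{new} pair rather than merely not destroying them at the old pair: conditions 3 and 4 reference $x^0_{k_0+1}$ and $x^0_{k_1}$, whose \emph{indices} shift when $k_0$ or $k_1$ moves, so one must carefully relate, e.g., the new $\ikkt_3$ at $(k_0-1,k_1)$ — which involves $x^0_{k_0}$ — to the hypothesis $\neg\ikkt_2$ at $(k_0,k_1)$ — which also involves $x^0_{k_0}$ but compared against a \emph{different} quantity $(\theta+\lambda)(k_0,k_1)$ versus $(\theta+\lambda)(k_0-1,k_1)$. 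Pinning down how $\theta$ and $\theta+\lambda$ change under a unit move of $k_0$ or $k_1$ (sign and, loosely, magnitude) via the explicit rational expressions in \eqref{eq:candidate_thetalambdarho_k0k1} is the crux; once those comparisons are in hand, the induction closes and, since every step either increments $k_1$ or decrements $k_0$, the trajectory is finite and the invariant holds throughout, which is the claim.
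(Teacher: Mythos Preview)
Your proposal is essentially correct and follows the same inductive strategy as the paper: verify $\ikkt_1\land\ikkt_3\land\ikkt_4$ at the starting pair $(k-1,k)$ by direct computation, then preserve the invariant across each transition using the sign/monotonicity facts for $\theta$, $\lambda$, and $\theta+\lambda$ deferred to the appendix. One imprecision to fix: you write that ``$\ikkt_4$ is unaffected by a change at the $k_0$ end,'' but $\ikkt_4$ compares $x^0_{k_1}$ to $\theta(k_0,k_1)$ and $\theta$ \emph{does} depend on $k_0$; the actual argument (which the paper gives, and which you allude to under ``monotonicity'') is that $\neg\ikkt_2(k_0,k_1)\iff\Delta_{k_0}\theta(k_0-1,k_1)\geq 0$, so $\theta$ only moves downward when $k_0$ is decremented and the inequality $x^0_{k_1}\geq\theta$ is preserved. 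The paper additionally splits your $\neg\ikkt_2$ branch into the sub-cases $\neg\ikkt_5$ and $\ikkt_5$ and records that the trajectory cannot pass from the former to the latter, but this refinement is not needed for the invariant itself---your two-case split along the algorithm's branches suffices.
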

\begin{proof}
  We proceed inductively.
  Let $k_0=k-1$ and $k_1=k$ be the initial point and note that KKT conditions 1, 3, and 4 hold at $(k_0,k_1)$ by direct computation (\cref{claim:initial_134}).
  This establishes the base case.

  Next, let $(k_0,k_1)$ be any candidate index-pair where $\ikkt_1(k_0,k_1)$, $\ikkt_3(k_0,k_1)$, and $\ikkt_4(k_0,k_1)$ hold and $(k_0',k_1')$ be the next iterate generated by the procedure.
  To show that KKT conditions 1, 3, and 4 at $(k_0',k_1')$, consider the four cases based on which of the remaining KKT conditions (2 and 5) hold at $(k_0,k_1)$.
  For shorthand, let $\ikkt_i = \ikkt_i(k_0,k_1)$ and $\ikkt_i' = \ikkt_i(k_0',k_1')$ for $i\in\{1,\ldots,5\}$.
  \begin{enumerate}[label=(\roman*)]
    \item $\ikkt_2\land\ikkt_5$:
    The algorithm terminates at the current iterate, which is the unique solution.

    \item $\ikkt_2\land\neg\ikkt_5$: $k_1\gets \min \bigl\{k_1+1,n\bigr\}$.
    The index-pair $(k_0',k_1')=(k_0,k_1+1)$ is the next point generated by the algorithm.
    At $(k_0',k_1')$, the conditions $\ikkt_1'$, $\ikkt_3'$, and $\ikkt_4'$ hold because of the following argument.
    \begin{itemize}
      \item $\ikkt_1'$:
      Direct computation (\crefenv{lem:Delta_k0k1}{lem:Delta_k1_15}) shows that $\neg\ikkt_5 \iff \Delta_{k_1}\lambda(k_0,k_1)\geq0 \iff \lambda(k_0,k_1+1)\geq \lambda(k_0,k_1)$.
      Since $\lambda(k_0,k_1)>0$ by assumption, it holds that $\lambda(k_0',k_1') = \lambda(k_0,k_1+1)>0$.
      
      \item $\ikkt_3'$:
      By definition, $\ikkt_3 \iff (\theta+\lambda)(k_0,k_1) \geq x_{k_0+1}^0$, so it suffices to show that $(\theta+\lambda)(k_0',k_1')\geq(\theta+\lambda)(k_0,k_1)$.
      The desired condition is equivalent to $\Delta_{k_1}(\theta+\lambda)(k_0,k_1)\geq0$.
      Direct computation (\crefenv{lem:Delta_k0k1}{lem:Delta_k1_35}) shows that $\Delta_{k_1}(\theta+\lambda)(k_0,k_1)\geq0 \iff \neg\ikkt_5$.
      Since $\neg\ikkt_5$ holds by assumption, the desired condition $\ikkt_3'$ holds.
      
      \item $\ikkt_4'$:
      Direct computation (\crefenv{lem:linking_23_and_45}{lem:linking_45_2}) shows that $\neg\ikkt_5\iff \ikkt_4'$.
    \end{itemize}

    \item $\neg\ikkt_2\land\neg\ikkt_5$: $k_0\gets \max\bigl\{k_0-1,0\bigr\}$.
    The index-pair $(k_0',k_1')=(k_0-1,k_1)$ is the next point generated by the algorithm.
    At $(k_0',k_1')$, the conditions $\ikkt_1'$, $\ikkt_3'$, and $\ikkt_4'$ hold because of the following argument.
    \begin{itemize}
      \item $\ikkt_1'$:
      It suffices to show that $\lambda(k_0-1,k_1)\geq \lambda(k_0,k_1)$, \ie $\Delta_{k_0}\lambda(k_0-1,k_1)\leq0$.
      Direct computation (\crefenv{lem:Delta_k0k1}{lem:Delta_k0_21}) shows that $\neg\ikkt_2\iff\Delta_{k_0}\lambda(k_0-1,k_1)\leq0$.
      
      \item $\ikkt_3'$:
      Direct computation (\crefenv{lem:linking_23_and_45}{lem:linking_23_2}) shows that $\neg\ikkt_2 \iff \ikkt_3'$.
      
      \item $\ikkt_4'$:
      Since $\ikkt_4\iff x_{k_1}^0 \geq \theta(k_0,k_1)$ and $\ikkt_4'\iff x_{k_1}^0 \geq \theta(k_0-1,k_1)$, it suffices to show that $\theta(k_0,k_1)\geq\theta(k_0-1,k_1)$, \ie $\Delta_{k_0}\theta(k_0-1,k_1)\geq0$.
      Direct computation (\crefenv{lem:Delta_k0k1}{lem:Delta_k0_24}) shows that $\neg\ikkt_2 \iff \Delta_{k_0}\theta(k_0-1,k_1)\geq0$.
    \end{itemize}
    Further, observe that transitions from case (iii) to case (iv) cannot occur, \ie that $\neg\ikkt_2\land\neg\ikkt_5$ cannot transition to $\neg\ikkt_2'\land\ikkt_5'$.
    To show this, it suffices to show that $\neg\ikkt_5'$ must hold.
    This is true because of the following argument:
    \begin{align*}
      \neg\ikkt_2&\overset{(a)}{\iff}\Delta_{k_0}\theta(k_0-1,k_1)\geq0
      \iff \theta(k_0-1,k_1)\leq \theta(k_0,k_1) \overset{(b)}{\leq} x_{k_1+1}^0
    \end{align*}
    where $(a)$ follows from direct computation (\crefenv{lem:Delta_k0k1}{lem:Delta_k0_24})
    and $(b)$ follows from the definition of $\neg\ikkt_5$.
    Thus $\neg\ikkt_5'\equiv\neg\ikkt_5(k_0-1,k_1)$ must hold.
    
    \item $\neg\ikkt_2\land\ikkt_5$: $k_0\gets \max\bigl\{k_0-1,0\bigr\}$.
    By the argument in case (iii), transitions to case (iv) must come from case (ii).
    The next point generated by the algorithm is $(k_0',k_1')=(k_0-1,k_1)$.
    At $(k_0',k_1')$, the conditions $\ikkt_1'$, $\ikkt_3'$, and $\ikkt_4'$ hold because of the following argument.
    
    \begin{itemize}
      \item $\ikkt_1'$:
      Direct computation (\crefenv{lem:Delta_k0k1}{lem:Delta_k0_21}) shows that $\neg\ikkt_2\iff \Delta_{k_0}\lambda(k_0-1,k_1)\leq0$, which implies that $\lambda(k_0-1,k_1)\geq\lambda(k_0,k_1)>0$.
      
      \item $\ikkt_3'$:
      Direct computation (\crefenv{lem:linking_23_and_45}{lem:linking_23_2}) shows that $\neg\ikkt_2 \iff \ikkt_3$.

      \item $\ikkt_4'$:
      Direct computation (\crefenv{lem:Delta_k0k1}{lem:Delta_k0_24}) shows that $\neg\ikkt_2\iff \Delta_{k_0}\theta(k_0-1,k_1)\geq0$, which implies that $\theta(k_0-1,k_1)\leq\theta(k_0,k_1)\leq x_{k_1}^0$.
    \end{itemize}    
  \end{enumerate}
  Finally, setting $x^0_0\coloneqq +\infty$ and $x_{n+1}^0\coloneqq-\infty$, it is clear that the algorithm is guaranteed to terminate since the last index-pair possibly scanned is $(k_0,k_1)=(0,n)$, and if $k_0=0$, then $\ikkt_2$; otherwise, if $k_1=n$, then $\ikkt_5$.
  Therefore, the procedure maintains satisfaction of KKT conditions 1, 3, and 4 throughout its trajectory.
\end{proof}

It remains only to identify an index-pair that satisfies both conditions 2 and 5.
Toward this end, we justify stopping the inner $k_0$ loop early (within a column) if an index-pair is found that satisfies condition 2 by showing that no index-pair ``above'' the current iterate (in the same column) can satisfy condition 2.
\begin{lemma}[Early stop]
  \label{lem:earlystop}
  Suppose that KKT condition 2 holds at a candidate index-pair $(k_0,k_1)$ along the trajectory of \cref{alg:projection_maxksum_snake} (\eg in case (ii) of the proof of \cref{lem:satisfy_134}).
  Then there does not exist an index-pair $(k_0',k_1)$ that satisfies KKT condition 2 for $k_0'<k_0$.
\end{lemma}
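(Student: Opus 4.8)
Fix the column index $k_1$ and regard every quantity as a function of the row index $k_0$. I would work throughout with the closed form in \eqref{eq:candidate_thetalambdarho_k0k1}: writing $\rho(k_0)\coloneqq k_0(k_1-k_0)+(k-k_0)^2$ and $N(k_0)\coloneqq k\,\ind_\beta^\top x^0_\beta+(k_1-k)\bigl(\ind_\alpha^\top x^0_\alpha-r\bigr)$, one has $(\theta+\lambda)(k_0,k_1)=N(k_0)/\rho(k_0)$, so that $\ikkt_2(k_0,k_1)$ is exactly the assertion $x^0_{k_0}>(\theta+\lambda)(k_0,k_1)$, i.e.\ the sign of the residual $g(k_0)\coloneqq x^0_{k_0}-(\theta+\lambda)(k_0,k_1)$. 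Since $(k_0,k_1)$ lies on the trajectory, \cref{lem:satisfy_134} already gives that KKT conditions 1, 3, 4 hold there (and persist), so the only status that can change as we move left in the column is that of conditions 2 and 5; the plan is therefore to prove a one-column "no recrossing'' statement for $g$: if $g(k_0)>0$ and conditions 1, 3, 4 hold at $(k_0,k_1)$, then $g(k_0')\le 0$ for every $k_0'<k_0$. This is exactly the claimed non-existence.

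\textbf{Key tool and main steps.} The concrete lever is the one-step behaviour of the data in $k_0$. Using $\rho(k_0)=k^2+k_0(k_1-2k)$, decreasing $k_0$ by one moves index $k_0$ out of $\alpha$ and into $\beta$, which yields the affine updates $\rho(k_0-1)=\rho(k_0)+(2k-k_1)$ and $N(k_0-1)=N(k_0)+(2k-k_1)\,x^0_{k_0}$; hence $(\theta+\lambda)(k_0-1,k_1)$ is a mediant-type combination of $(\theta+\lambda)(k_0,k_1)$ and $x^0_{k_0}$ (a genuine convex combination when $2k-k_1\ge 0$, and an extrapolation otherwise, legitimate since $\rho(k_0-1)>0$ for all $k_0'\in\{0,\dots,k-1\}$). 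I expect the base case $k_0'=k_0-1$ to follow directly: from $g(k_0)>0$ we get $(\theta+\lambda)(k_0,k_1)<x^0_{k_0}$, the mediant identity then forces $(\theta+\lambda)(k_0-1,k_1)\le x^0_{k_0}$, and $x^0_{k_0}\le x^0_{k_0-1}$ gives $g(k_0-1)\le 0$. For $k_0'<k_0-1$ I would proceed by downward induction on $k_0'$, combining the mediant identity at $k_0'$ with the sign already established at $k_0'+1$ and with the trajectory invariants $\ikkt_3,\ikkt_4$; the single-step sign equivalences needed here — relating $\neg\ikkt_2$ to the signs of $\Delta_{k_0}\theta$, $\Delta_{k_0}\lambda$, and to $\ikkt_3$ at the shifted row — are precisely the "direct computation'' facts of \cref{lem:Delta_k0k1} and \cref{lem:linking_23_and_45} already invoked in the proof of \cref{lem:satisfy_134}, so I would reuse them rather than reprove them.

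\textbf{Where the difficulty lies.} The main obstacle is the propagation step, not the base case. The subtlety is that the individual multipliers $\theta(k_0,k_1)$ and $\lambda(k_0,k_1)$ are \emph{not} monotone in $k_0$ — the signs of their one-step differences flip exactly across the boundary cut out by condition 2 — so the whole argument must be carried out in terms of the combination $\theta+\lambda$ via the mediant identity, while also accommodating the fact that the normalizer's increment $2k-k_1$ may have either sign. In particular the mediant identity only delivers bounds of the form $(\theta+\lambda)(k_0',k_1)\ge x^0_{k_0'+1}$, so the trajectory invariant $\ikkt_3$ (and, where needed, $\ikkt_4$) has to be used to upgrade this to the stronger $(\theta+\lambda)(k_0',k_1)\ge x^0_{k_0'}$ that $\neg\ikkt_2(k_0',k_1)$ requires; getting this bridge right is the crux. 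A secondary point of care is that conditions 2 and 5 in \eqref{eq:maxksum_projection_kkt} are strict inequalities, which (as the paper notes) rules out any LP/continuity shortcut, so the mediant bookkeeping must track strictness at each step. Once these single-step facts are assembled, the downward induction closes immediately, giving \cref{lem:earlystop}; together with \cref{lem:latestart} this yields the path-length bound and hence the $O(n)$ complexity in \cref{prop:snake_complexity}.
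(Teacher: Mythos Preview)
There is a genuine gap: the statement you set out to prove --- that $g(k_0')\le 0$, i.e.\ $\neg\ikkt_2(k_0',k_1)$, for every $k_0'<k_0$ --- is false, and your base case exposes this through a sign slip. From $(\theta+\lambda)(k_0,k_1)<x^0_{k_0}$ your mediant identity (with $\rho(k_0-1)>0$) indeed gives $(\theta+\lambda)(k_0-1,k_1)<x^0_{k_0}$; combined with $x^0_{k_0}\le x^0_{k_0-1}$ this yields $g(k_0-1)=x^0_{k_0-1}-(\theta+\lambda)(k_0-1,k_1)>0$, not $\le 0$. In other words, condition~2 \emph{propagates downward}: $\ikkt_2(k_0,k_1)\Rightarrow\ikkt_2(k_0',k_1)$ for all $k_0'\le k_0$, which is precisely what the paper records in \cref{lem:linking_22j}. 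Your proposed bridge via the invariant $\ikkt_3$ cannot rescue the argument either: by \crefenv{lem:linking_23_and_45}{lem:linking_23_2} one has $\ikkt_2(k_0,k_1)\iff\neg\ikkt_3(k_0-1,k_1)$, and $\neg\ikkt_3$ then propagates to all smaller rows by \cref{lem:linking_22j}, so condition~3 is unavailable at every $k_0'<k_0$ and cannot be used to ``upgrade'' the inequality.

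What the paper's proof actually establishes --- and what the algorithm needs --- concerns the \emph{pair}: in each column $k_1$ there is at most one row satisfying both conditions~2 and~3 simultaneously. Since the trajectory point already satisfies condition~3 by \cref{lem:satisfy_134}, this uniqueness rules out every $k_0'<k_0$ as an optimal index-pair (optimality requires all five conditions, in particular~3). The argument is a one-line contradiction: if adjacent rows $k_0$ and $k_0-1$ both satisfied conditions~2 and~3, then $\ikkt_2(k_0,k_1)$ forces $\neg\ikkt_3(k_0-1,k_1)$. Your mediant computation is exactly the right ingredient for proving the contiguity of $K_0^2(k_1)$ and $K_0^3(k_1)$ that underpins this, but the direction it delivers is opposite to what a no-recrossing claim for $g$ alone would require.
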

\begin{proof}
  The claim holds because for any $k_1$, there is at most one element $k_0'\in\{0,\ldots,k-1\}$ such that $\ikkt_2(k_0',k_1)$ and $\ikkt_3(k_0',k_1)$ both hold.
  The proof is by contradiction.
  Fix any $k_1\in\{k,\ldots,n\}$, and suppose that there exist two indices $k_0'$ and $k_0''$ for which $\ikkt_2(k_0',k_1)\land \ikkt_3(k_0',k_1)$ and $\ikkt_2(k_0'',k_1)\land \ikkt_3(k_0'',k_1)$ hold.
  By direct computation (\cref{lem:linking_22j}), the sets $K^2(k_1)$ ($K^{\neg2}(k_1)$) and $K^3(k_1)$ ($K^{\neg3}(k_1)$) where KKT conditions 2 and 3 are satisfied (not satisfied) are contiguous, so it suffices to consider $k_0''=k_0'-1$.
  By direct computation (\crefenv{lem:linking_23_and_45}{lem:linking_23_2}), $\ikkt_2(k_0',k_1)\iff\neg\ikkt_3(k_0'-1,k_1)=\neg\ikkt_3(k_0'',k_1)$, a contradiction.
  Therefore there can be at most one element that satisfies KKT conditions 2 and 3, and stopping early is justified.
\end{proof}
Finally, we justify the ``late start'' of the inner $k_0$ loop after moving from column $k_1$ to $k_1+1$.
\begin{lemma}[Late start]
  \label{lem:latestart}
  For a given $k_0$, consider a transition from $k_1$ to $k_1+1$ along the trajectory of \cref{alg:projection_maxksum_snake}.
  The optimal solution $(\bar{k}_0,\bar{k}_1)$ must satisfy $\bar{k}_0\leq k_0$ and $\bar{k}_1\geq k_1$.
\end{lemma}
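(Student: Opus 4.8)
The plan is to deduce both inequalities from the monotone structure of the trajectory together with the correctness already in hand. By direct inspection of \cref{alg:projection_maxksum_snake}, every update along its trajectory is either $k_0 \gets k_0 - 1$ (cases (iii)--(iv) in the proof of \cref{lem:satisfy_134}) or $k_1 \gets k_1 + 1$ (case (ii)); in particular $k_0$ is nonincreasing and $k_1$ is nondecreasing along the trajectory, and a transition $k_1 \to k_1+1$ occurs precisely when the current state $(k_0,k_1)$ has $\ikkt_2(k_0,k_1)\land\neg\ikkt_5(k_0,k_1)$ (and, by \cref{lem:satisfy_134}, also $\ikkt_1,\ikkt_3,\ikkt_4$).

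Next I would identify the terminal state of the trajectory with the optimal index-pair. \cref{lem:satisfy_134} shows that $\ikkt_1\land\ikkt_3\land\ikkt_4$ is maintained and that the algorithm terminates; termination is triggered only when $\ikkt_2\land\ikkt_5$ additionally holds, so at the terminal state all five conditions \eqref{eq:maxksum_projection_kkt} hold. These conditions are necessary and sufficient for optimality of the strongly convex problem \eqref{eq:maxksum_projection_sort}, whose solution --- hence whose index-pair $(\bar{k}_0,\bar{k}_1)$ --- is unique; thus the terminal state equals $(\bar{k}_0,\bar{k}_1)$. Since the terminal state is reached from the transition state $(k_0,k_1)$ by further applications of the two monotone moves above, $\bar{k}_0 \le k_0$ and $\bar{k}_1 \ge k_1$, which is the claim.

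The real content, and the step where care is needed, is the identification of the terminal state with the optimum: this is exactly what rules out the trajectory overshooting the optimum in $k_1$ or skipping past it in $k_0$, and it rests entirely on \cref{lem:satisfy_134}. If instead one wants a self-contained argument, I would work column by column: for each $k_1'$, contiguity of the $\ikkt_2$- and $\ikkt_3$-satisfaction sets (\cref{lem:linking_22j}) together with the linking identity $\ikkt_2(j,k_1')\iff\neg\ikkt_3(j-1,k_1')$ used in the proof of \cref{lem:earlystop} forces the $\ikkt_2$-set to be a down-set $\{0,\dots,k_0^*(k_1')\}$ and the $\ikkt_3$-set an up-set $\{k_0^*(k_1'),\dots,k-1\}$ meeting at a single row $k_0^*(k_1')$, so any optimal pair in column $k_1'$ has $k_0 = k_0^*(k_1')$. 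Then $\bar{k}_1 \ge k_1$ follows because for each $k_1' \le k_1$ the trajectory already visited the unique candidate $(k_0^*(k_1'),k_1')$ without terminating (at $k_1$ because $\neg\ikkt_5$ holds by hypothesis), so no column $\le k_1$ contains an optimum; and $\bar{k}_0 \le k_0$ follows because $\ikkt_3$ holds along the trajectory (\cref{lem:satisfy_134}), which pins the trajectory to $k_0^*(k_1')$ at each column's $\ikkt_2$-boundary and places the entry row $k_0^*(k_1')$ into column $k_1'+1$ inside that column's $\ikkt_3$-set, giving $k_0^*(k_1'+1)\le k_0^*(k_1')$; iterating over the visited columns $k_1,\dots,\bar{k}_1$ gives $\bar{k}_0 = k_0^*(\bar{k}_1) \le k_0^*(k_1) = k_0$. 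I expect the bookkeeping in this last step --- verifying that the trajectory genuinely reaches $k_0^*$ of every visited column and that this index is monotone along the trajectory --- to be the main obstacle.
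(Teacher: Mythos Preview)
Your proposal is correct but takes a genuinely different route from the paper. The paper argues locally at the new column: it evaluates $\ikkt_2$ at $(k_0,k_1+1)$ and, via the contiguity of $K_0^2(k_1+1)$ from \cref{lem:linking_22j}, concludes that no $k_0'>k_0$ in that column can satisfy $\ikkt_2$, hence cannot be optimal. Your main argument is global: you observe that \cref{lem:satisfy_134} already proves the algorithm terminates at a state satisfying all five KKT conditions---hence at the unique optimal index-pair $(\bar{k}_0,\bar{k}_1)$---and that the trajectory is monotone in both coordinates, so the inequalities follow immediately.

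Your route is shorter and makes explicit something the paper leaves implicit: once \cref{lem:satisfy_134} is in hand, both \cref{lem:earlystop} and \cref{lem:latestart} are really corollaries of trajectory monotonicity rather than separate ingredients of the correctness proof. The paper's route is more modular in that it isolates the combinatorial reason why restarting at $k_0$ in the new column cannot skip an optimal pair, independently of the global termination argument; this is what explains the algorithm's design choice. Your alternative ``self-contained'' sketch in the final paragraph is in fact closer in spirit to the paper's proof---and more thorough, since the paper's own argument treats only column $k_1+1$ and leaves the $\bar{k}_1\ge k_1$ half to be inferred from \cref{lem:earlystop} applied to the previously visited columns.
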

\begin{proof}
  Evaluate KKT condition 2 at the new iterate $(k_0,k_1+1)$.
  If $\ikkt_2(k_0,k_1+1)$, then by the preceding two lemmas, $k_0$ is the unique element from the column associated with $k_1+1$ that satisfies both KKT conditions 2 and 3.
  Otherwise, if $\neg\ikkt_2(k_0,k_1+1)$, then by direct computation (\cref{lem:linking_22j}), any index $k_0'>k_0$ will not satisfy KKT condition 2 and therefore not be optimal.
\end{proof}

Combining our prior results, we obtain the desired conclusion.
\begin{proposition}\label{prop:snake_complexity}
  The procedure given in \cref{alg:projection_maxksum_snake} terminates at the unique solution $(\bar{k}_0,\bar{k}_1)$ in at most $n$ steps using $O(n)$ elementary operations.
\end{proposition}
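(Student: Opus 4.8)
The plan is to assemble \cref{lem:satisfy_134,lem:earlystop,lem:latestart} into a short argument with three ingredients: a reduction of the optimality test, a termination-and-step-count argument, and a per-iteration cost accounting. First, the reduction. By \cref{lem:satisfy_134}, every index-pair $(k_0,k_1)$ visited along the trajectory of \cref{alg:projection_maxksum_snake} already satisfies KKT conditions $\ikkt_1$, $\ikkt_3$, and $\ikkt_4$; hence at any visited pair the algorithm's termination test $\ikkt_2\land\ikkt_5$ is equivalent to the full reduced system \eqref{eq:maxksum_projection_kkt}, which in turn is equivalent to the necessary-and-sufficient KKT system \eqref{eq:kkt_maxksum_projection_sort} for the strongly convex problem \eqref{eq:maxksum_projection_sort}. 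Therefore any pair at which the algorithm halts produces, via the recovery formulas \eqref{eq:candidate_x_k0k1}, the unique projection $\bar x$; moreover the strict inequalities in $\ikkt_2$ and $\ikkt_5$ force the block boundaries of $\bar x$ in the order structure \eqref{eq:order_structure} to sit exactly at the halting pair, so that pair is the index-pair $(\bar k_0,\bar k_1)$ of $\bar x$ associated with $k$, and in particular such a pair is unique.

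Second, termination and the step bound. The only moves the algorithm makes are $k_0\gets k_0-1$ (when $\neg\ikkt_2$) and $k_1\gets k_1+1$ (when $\ikkt_2\land\neg\ikkt_5$), so the trajectory is strictly monotone — $k_0$ nonincreasing from $k-1$, $k_1$ nondecreasing from $k$ — and confined to $\{0,\dots,k-1\}\times\{k,\dots,n\}$. With the conventions $\sorth x^0_0\coloneqq+\infty$ and $\sorth x^0_{n+1}\coloneqq-\infty$, condition $\ikkt_2$ holds automatically whenever $k_0=0$ and $\ikkt_5$ holds automatically whenever $k_1=n$, so if the path ever reaches $(0,n)$ it halts there; hence it halts after finitely many steps, and since each step changes exactly one coordinate by one unit, the number of steps is at most $(k-1-0)+(n-k)=n-1$. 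The role of \cref{lem:earlystop,lem:latestart} is to certify that this pruned monotone search is \emph{faithful}, i.e.\ discards no optimal pair: \cref{lem:earlystop} shows that once $\ikkt_2$ is first met in a column, no pair above it in that column satisfies $\ikkt_2$ (so stopping the $k_0$-scan early is safe), and \cref{lem:latestart} shows that at each rightward move the optimal pair obeys $\bar k_0\le k_0$ and $\bar k_1\ge k_1$ (so restarting the next column's scan at the current $k_0$ rather than at $k-1$ is safe). Consequently the unique full-KKT pair $(\bar k_0,\bar k_1)$ lies on the trajectory, and by the reduction of the first paragraph the algorithm halts precisely there — so it cannot halt prematurely at a spurious pair.

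Third, the cost. Between consecutive iterates only one of $k_0$, $k_1$ changes by one, so the running quantities $\rho$, $\ind_\beta^\top\sorth x^0_\beta$, and $\ind_\alpha^\top\sorth x^0_\alpha$ needed by the candidate-computation step of \cref{alg:projection_maxksum_snake} can be maintained incrementally in $O(1)$; given these, $\theta$, $\theta+\lambda$ and the indicators $\ikkt_2,\ikkt_5$ are closed-form evaluations followed by two comparisons, i.e.\ $O(1)$ per iteration. Over at most $n-1$ iterations this is $O(n)$ work, to which recovering $\bar x$ from $(\bar k_0,\bar k_1,\bar\lambda,\bar\theta)$ via \eqref{eq:candidate_x_k0k1} adds one further $O(n)$ pass. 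Hence the total is $O(n)$.

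I expect the main obstacle to be bookkeeping rather than any single hard estimate, since the analytic content lives in the preceding lemmas: one must check that the $\pm\infty$ conventions genuinely make the boundary pair $(0,n)$ a valid stopping point (so termination is unconditional), that halting at any pair satisfying $\ikkt_2\land\ikkt_5$ coincides with the unique index-pair $(\bar k_0,\bar k_1)$ (so there is no spurious early halt), and that the running sums $\ind_\alpha^\top\sorth x^0_\alpha$ and $\ind_\beta^\top\sorth x^0_\beta$ can in fact be updated in constant time under both move types.
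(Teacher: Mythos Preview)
Your proof is correct and follows the same approach as the paper, which simply cites \cref{lem:satisfy_134,lem:earlystop,lem:latestart} and counts at most $n-k$ increments of $k_1$ plus at most $k$ decrements of $k_0$ for a total of at most $n$ transitions. Your version is in fact more thorough: you justify the $O(1)$ per-iteration cost via incremental maintenance of the running sums (which the paper leaves implicit), give the slightly sharper step bound $n-1$, and spell out why any halting pair must coincide with $(\bar k_0,\bar k_1)$.
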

\begin{proof}
  The conclusion is an immediate consequence of \cref{lem:satisfy_134,,lem:earlystop,,lem:latestart} and the form of the updates in \cref{alg:projection_maxksum_snake}.
  Since there can be at most $n-k$ transitions of the form $k_1\gets k_1+1$ and at most $k$ transitions of the form $k_0\gets k_0-1$, there are at most $n$ total transitions, and hence the proof is complete.
\end{proof}

\begin{toappendix}
  \subsection{ESGS}
  \label{apx:proposedalgorithms:esgs}
  In this section, we provide additional observations and verifications of the computations claimed in the analysis of \cref{alg:projection_maxksum_snake} in \cref{sec:esgs_analysis}.
  
  \subsubsection*{Simple observations}
  We begin by observing that for any candidate $(k_0,k_1)$ with $k_0\in\{0,\ldots,k-1\}$ and $k_1\in\{k,\ldots,n\}$, it holds that $\rho(k_0,k_1)\coloneqq k_0\cdot(k_1-k_0) + (k-k_0)^2 > 0$.
  Next we verify that conditions 1, 3, and 4 hold at the initial iterate.
  \begin{claim}[Initial candidate index-pair $(k-1,k)$]
    \label{claim:initial_134}
    For any $k\geq1$, it holds that (i) $\ikkt_1(k-1,k)$; (ii) $\ikkt_3(k-1,k)$; and $\ikkt_4(k-1,k)$.
  \end{claim}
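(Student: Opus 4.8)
The plan is to substitute the corner index-pair $(k_0,k_1)=(k-1,k)$ directly into the candidate formulas \eqref{eq:candidate_thetalambdarho_k0k1} and exploit the fact that here the middle block $\beta=\{k_0+1,\ldots,k_1\}$ collapses to the single index $\{k\}$ (and, in the relevant branch of the algorithm, $2\le k\le n-1$ so the surrounding bookkeeping is trivial), which turns every quantity into a one-line computation. First I would compute $\rho(k-1,k)=(k-1)\bigl(k-(k-1)\bigr)+\bigl(k-(k-1)\bigr)^2=k$, and then read off $\ind_\beta^\top x_\beta^0=x_k^0$, $\ind_\alpha^\top x_\alpha^0=\sum_{i=1}^{k-1}x_i^0$, and $k-k_0=k_1-k_0=1$, so that $\lambda(k-1,k)=\tfrac1k\bigl(x_k^0+\sum_{i=1}^{k-1}x_i^0-r\bigr)=\tfrac1k\bigl(\maxsum_k{x^0}-r\bigr)$ and $\theta(k-1,k)=\tfrac1k\bigl((k-1)x_k^0-\sum_{i=1}^{k-1}x_i^0+r\bigr)$.

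The single algebraic identity that drives all three parts is that these two quantities sum exactly to $x_k^0$, i.e.\ $(\theta+\lambda)(k-1,k)=x_k^0$ (the $\sum_{i=1}^{k-1}x_i^0$ and $r$ terms cancel and $(k-1)x_k^0+x_k^0=kx_k^0$). With this in hand the three conditions are immediate. For (i), $\ikkt_1(k-1,k)$ asks for $\lambda(k-1,k)>0$, which is precisely \cref{assn:strict_projection} (together with $k\ge1$). For (ii), $\ikkt_3(k-1,k)$ asks for $(\theta+\lambda)(k-1,k)\ge x_{k_0+1}^0=x_k^0$, which holds with equality by the identity. For (iii), $\ikkt_4(k-1,k)$ asks for $x_{k_1}^0=x_k^0\ge\theta(k-1,k)$; since the identity gives $\theta(k-1,k)=x_k^0-\lambda(k-1,k)$ and part (i) gives $\lambda(k-1,k)>0$, in fact $\theta(k-1,k)<x_k^0$.

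There is essentially no obstacle here: the proof is bookkeeping, and the only care needed is (a) noting that in this branch $k_0=k-1\ge1$ and $\beta=\{k\}$ is a genuine singleton, so the order-structure convention \eqref{eq:order_structure} is respected, and (b) stating the identity $(\theta+\lambda)(k-1,k)=x_k^0$ cleanly, since it is reused in \cref{lem:satisfy_134}. The genuinely laborious ``direct computation'' content cited in that lemma lies in the monotonicity lemmas for $\Delta_{k_0}$ and $\Delta_{k_1}$, which are logically separate from this base case and would be handled on their own.
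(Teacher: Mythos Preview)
Your proposal is correct and follows essentially the same approach as the paper: both compute $\rho(k-1,k)=k$, derive $\lambda(k-1,k)=\tfrac1k\bigl(\maxsum_k{x^0}-r\bigr)$ and the key identity $(\theta+\lambda)(k-1,k)=x_k^0$, then read off the three conditions from \cref{assn:strict_projection}. Your packaging is slightly cleaner in that you isolate the identity $(\theta+\lambda)(k-1,k)=x_k^0$ once and reuse it for both (ii) and (iii), whereas the paper redoes the arithmetic for each part separately; the mathematical content is identical.
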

  \begin{proof}
    \label{pf:initial_134}
    Let $k\geq1$.
    Using the fact that $\rho(k-1,k)=k>0$, the proofs follow by direct computation.
    \begin{enumerate}
      \item Using $0<\sum_{i=1}^k x_i^0 - r$ from \cref{assn:strict_projection}, it holds that
      $\rho(k-1,k)\cdot\lambda(k-1,k) = k\cdot\lambda(k-1,k) = k\cdot\bigl(x_{k}^0 + \sum_{i=1}^{k-1}x_i^0 - r\bigr) > 0$.

      \item Since $k\geq1$ and since $x^0$ is sorted, it holds that
      $\rho(k-1,k)\cdot(\theta+\lambda)(k-1,k) = k\sum_{i=k}^kx_i^0 + (k-k)\cdot\bigl(\sum_{i=1}^{k-1}x_i^0 - r\bigr) = kx_k^0 \geq k x_{k_0-1+1}^0=\rho(k-1,k)\cdot x_{k}^0$.
      
      \item Using $0<\sum_{i=1}^k  x_i^0 - r$ from \cref{assn:strict_projection}, it holds that
      $\rho(k-1,k)\cdot\theta(k-1,k) = (k-1) x_{k}^0 - (k - (k-1))\bigl(\sum_{i=1}^{k-1} x_i^0 - r\bigr)
      = k x_k^0 - \bigl(\sum_{i=1}^k  x_i^0 - r\bigr) < k x_{k}^0 = \rho(k-1,k)\cdot x_{k}^0$.
    \end{enumerate}
    Thus the claims are proved.
  \end{proof}

  \subsubsection*{Linking conditions}
  Next we summarize characterize some relationships between various KKT conditions as depicted in \cref{fig:linking}.
  \begin{figure}
    \begin{subfigure}[t]{0.45\linewidth}
      \centering
      \begin{tikzpicture}[scale=1.0]

        \begin{scope}
          \draw[<->,thin] (-0.5,-0.25) -- (-0.5,4.25);
          \draw (-0.5,3.75) node[left]{$0$};
          \draw[-,thin] (-0.45,3.75) -- (-0.55,3.75);
          \draw (-0.5,0.25) node[left]{$k-1$};
          \draw[-,thin] (-0.45,0.25) -- (-0.55,0.25);
      
          \draw (0.3,4) node[above]{$\ikkt_2$};
          \draw[step=0.5cm,opacity=0.25] (0.0,0.0) grid (0.5,4);
          \fill[green!50!black,opacity=0.75] (1/8,17/8) rectangle ++(2/8,2/8);
          \fill[green!50!black,opacity=0.75] (1/8,21/8) rectangle ++(2/8,2/8);
          \fill[green!50!black,opacity=0.75] (1/8,25/8) rectangle ++(2/8,2/8);
          \draw (2/8,59/32) node{$\vdots$};
          \fill[red,opacity=0.75] (1/8,9/8) rectangle ++(2/8,2/8);
          \fill[red,opacity=0.75] (1/8,5/8) rectangle ++(2/8,2/8);
          \draw (1+1/8,22/8) node (kkt3a) {};
          \draw (1/2-1/8,18/8) node (kkt2a) {};
          \draw (1+1/8,22/8+1/2+1/16) node (kkt3b) {};
          \draw (1/2-1/8,18/8+1/2+1/16) node (kkt2b) {};
          \draw (1+1/8,22/8+1+2/16) node (kkt3c) {};
          \draw (1/2-1/8,18/8+1+2/16) node (kkt2c) {};
          \draw (1+1/8,10/8) node (kkt3d) {};
          \draw (1/2-1/8,10/8) node (kkt2d) {};
          \draw (1+1/8,6/8) node (kkt3e) {};
          \draw (1/2-1/8,6/8) node (kkt2e) {};
          
          \connectorH[<->]{0.5}{kkt3a}{kkt2a};
          \connectorH[<->]{0.5}{kkt3b}{kkt2b};
          \connectorH[<->]{0.5}{kkt3c}{kkt2c};
          \connectorH[<-]{0.5}{kkt3d}{kkt2d};
          \connectorH[<-]{0.5}{kkt3e}{kkt2e};
        \end{scope}
        
        \begin{scope}[shift={(1,0)}]
          \draw (0.3,4) node[above]{$\ikkt_3$};
          \draw[step=0.5cm,opacity=0.25] (0.0,0.0) grid (0.5,4);
          \fill[red,opacity=0.75] (1/8,21/8) rectangle ++(2/8,2/8);
          \fill[red,opacity=0.75] (1/8,25/8) rectangle ++(2/8,2/8);
          \fill[red,opacity=0.75] (1/8,29/8) rectangle ++(2/8,2/8);
          \draw (2/8,59/32) node{$\vdots$};
          \draw (2/8,59/32+1/2) node{$\vdots$};
          \fill[green!50!black,opacity=0.75] (1/8,9/8) rectangle ++(2/8,2/8);
          \fill[green!50!black,opacity=0.75] (1/8,5/8) rectangle ++(2/8,2/8);
        \end{scope}
      \end{tikzpicture}
      \caption{
        \small Depiction of relationship between $\ikkt_2$ and $\ikkt_3$ in \cref{lem:linking_23_and_45} for given $k_1$.
        Green: indicates a condition holds.
        Red: indicates a condition does not hold.
        Arrows represent implication.
      }
      \label{fig:link2345}
    \end{subfigure}
    \hfill
    \begin{subfigure}[t]{0.45\linewidth}
      \centering
      \begin{tikzpicture}[scale=1.0]

        \draw[<->,thin] (-0.5,-0.25) -- (-0.5,4.25);
        \draw (-0.5,3.75) node[left]{$0$};
        \draw[-,thin] (-0.45,3.75) -- (-0.55,3.75);
        \draw (-0.5,1.75) node[left]{$k_0^-$};
        \draw[-,thin] (-0.45,1.75) -- (-0.55,1.75);
        \draw (-0.5,2.25) node[left]{$k_0^+$};
        \draw[-,thin] (-0.45,2.25) -- (-0.55,2.25);
        \draw (-0.5,0.25) node[left]{$k-1$};
        \draw[-,thin] (-0.45,0.25) -- (-0.55,0.25);
        
        \begin{scope}
          \draw (0.3,4) node[above]{$\ikkt_2$};
          \draw[step=0.5cm,opacity=0.25] (0.0,0.0) grid (0.5,4);
          \fill[green!50!black,opacity=0.75] (1/8,17/8) rectangle ++(2/8,2/8);
          \draw[green!50!black,opacity=0.75] (1/8,17/8) rectangle ++(2/8,14/8);
          \fill[red,opacity=0.75] (1/8,13/8) rectangle ++(2/8,2/8);
          \draw[red,opacity=0.75] (1/8,15/8) rectangle ++(2/8,-14/8);
        \end{scope}
        
        \begin{scope}[shift={(1,0)}]
          \draw[<->,thin] (1,-0.25) -- (1,4.25);
          \draw (1,3.75) node[right]{$0$};
          \draw[-,thin] (0.95,3.75) -- (1.05,3.75);
        
          \draw (1,2.25) node[right]{$k_0^+$};
          \draw[-,thin] (0.95,2.25) -- (1.05,2.25);
          \draw (1,2.75) node[right]{$k_0^-$};
          \draw[-,thin] (0.95,2.75) -- (1.05,2.75);
          \draw (1,0.25) node[right]{$k-1$};
          \draw[-,thin] (0.95,0.25) -- (1.05,0.25);
          
          \draw (0.3,4) node[above]{$\ikkt_3$};
          \draw[step=0.5cm,opacity=0.25] (0.0,0.0) grid (0.5,4);
          \fill[red,opacity=0.75] (1/8,21/8) rectangle ++(2/8,2/8);
          \draw[red,opacity=0.75] (1/8,21/8) rectangle ++(2/8,10/8);
          \fill[green!50!black,opacity=0.75] (1/8,17/8) rectangle ++(2/8,2/8);
          \draw[green!50!black,opacity=0.75] (1/8,19/8) rectangle ++(2/8,-18/8);
        \end{scope}
      \end{tikzpicture}
      \caption{
        \small Depiction of \cref{lem:linking_22j} for given $k_1$: local information (shaded) becomes global (outlined).
        Green outlines denote $K_0^2(k_1)$ and $K_0^3(k_1)$; red outlines denote $K_0^{\neg2}(k_1)$ and $K_0^{\neg3}(k_1)$.
      }
      \label{fig:link22j33j}
    \end{subfigure}
    \caption{\small Linking conditions.}
    \label{fig:linking}
  \end{figure}

  \begin{claim}[Linking 2 \& 3 and 4 \& 5]
    \label{lem:linking_23_and_45}
    It holds that
    \begin{enumerate}
      \item $\neg\ikkt_2(k_0,k_1)\implies \ikkt_3(k_0,k_1)$;\label{lem:linking_23_1}
      
      \item $\ikkt_2(k_0,k_1) \iff \neg\ikkt_3(k_0-1,k_1)$.\label{lem:linking_23_2}
      
      \item $\neg\ikkt_4(k_0,k_1)\implies \ikkt_5(k_0,k_1)$;\label{lem:linking_45_1}
      
      \item $\neg\ikkt_5(k_0,k_1) \iff \ikkt_4(k_0,k_1+1)$.\label{lem:linking_45_2}
    \end{enumerate}
  \end{claim}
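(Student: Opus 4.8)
The plan is to split the four assertions into the two that are essentially free and the two that need the closed forms. Items~(1) and~(3) follow from the monotonicity of the sorted vector $x^0$ alone. Writing $\neg\ikkt_2(k_0,k_1)$ as $(\theta+\lambda)(k_0,k_1)\geq x^0_{k_0}$ and using $x^0_{k_0}\geq x^0_{k_0+1}$ gives $(\theta+\lambda)(k_0,k_1)\geq x^0_{k_0+1}$, which is exactly $\ikkt_3(k_0,k_1)$; this settles item~(1). Symmetrically, $\neg\ikkt_4(k_0,k_1)$ reads $\theta(k_0,k_1)>x^0_{k_1}\geq x^0_{k_1+1}$, which is $\ikkt_5(k_0,k_1)$; this settles item~(3). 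The conventions $x^0_0=+\infty$ and $x^0_{n+1}=-\infty$ cover the endpoint cases $k_0=0$ and $k_1=n$.

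For items~(2) and~(4) I would substitute the closed forms from \eqref{eq:candidate_thetalambdarho_k0k1}, namely $(\theta+\lambda)(k_0,k_1)=\bigl(k\,s_\beta+(k_1-k)(s_\alpha-r)\bigr)/\rho$ and $\theta(k_0,k_1)=\bigl(k_0\,s_\beta-(k-k_0)(s_\alpha-r)\bigr)/\rho$, where $s_\alpha\coloneqq\ind_\alpha^\top x^0_\alpha$, $s_\beta\coloneqq\ind_\beta^\top x^0_\beta$, and $\rho\coloneqq k_0(k_1-k_0)+(k-k_0)^2>0$, then clear the positive denominator and track how the triple $(s_\alpha,s_\beta,\rho)$ changes under the relevant adjacent move. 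For item~(2) the move $k_0\mapsto k_0-1$ decreases $s_\alpha$ by $x^0_{k_0}$, increases $s_\beta$ by $x^0_{k_0}$, and, crucially, satisfies $\rho(k_0-1,k_1)=\rho(k_0,k_1)+2k-k_1$. Writing $\ikkt_2(k_0,k_1)$ as $\rho(k_0,k_1)\,x^0_{k_0}>k\,s_\beta+(k_1-k)(s_\alpha-r)$ and $\neg\ikkt_3(k_0-1,k_1)$ as $(\theta+\lambda)(k_0-1,k_1)<x^0_{(k_0-1)+1}=x^0_{k_0}$ and substituting, the extra terms $k\,x^0_{k_0}-(k_1-k)x^0_{k_0}=(2k-k_1)x^0_{k_0}$ generated on one side recombine with the $(2k-k_1)x^0_{k_0}$ produced on the other, leaving exactly $\rho(k_0,k_1)\,x^0_{k_0}$ on one side and $k\,s_\beta+(k_1-k)(s_\alpha-r)$ on the other, so the two conditions are literally identical. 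Item~(4) is handled the same way: the move $k_1\mapsto k_1+1$ leaves $s_\alpha$ fixed, increases $s_\beta$ by $x^0_{k_1+1}$, and satisfies $\rho(k_0,k_1+1)=\rho(k_0,k_1)+k_0$; writing both $\neg\ikkt_5(k_0,k_1)$ and $\ikkt_4(k_0,k_1+1)$ as a comparison between $k_0\,s_\beta-(k-k_0)(s_\alpha-r)$ and $\rho\,x^0_{k_1+1}$ and substituting, the $k_0\,x^0_{k_1+1}$ added on the left is exactly cancelled by $\bigl(\rho(k_0,k_1+1)-\rho(k_0,k_1)\bigr)x^0_{k_1+1}=k_0\,x^0_{k_1+1}$ on the right, again producing the identical inequality.

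I do not anticipate a genuine obstacle; the claim is elementary. The one place to be careful is the algebraic bookkeeping in the two substitutions -- specifically, deriving the two $\rho$-recursions correctly and keeping the strict versus nonstrict inequalities aligned so that each equivalence comes out exactly as stated rather than with an endpoint flipped. In the write-up I would record the two identities $\rho(k_0-1,k_1)=\rho(k_0,k_1)+2k-k_1$ and $\rho(k_0,k_1+1)=\rho(k_0,k_1)+k_0$ as a one-line observation at the start of \cref{apx:proposedalgorithms:esgs} (they are also used in the $\Delta_{k_0}$ and $\Delta_{k_1}$ difference computations referenced in \cref{sec:esgs_analysis}), after which items~(2) and~(4) collapse to two lines each.
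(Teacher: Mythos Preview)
Your proposal is correct and follows essentially the same approach as the paper. You prove items (1) and (3) directly while the paper argues via the contrapositive, but the content is identical; for items (2) and (4) you isolate the recursions $\rho(k_0-1,k_1)=\rho(k_0,k_1)+2k-k_1$ and $\rho(k_0,k_1+1)=\rho(k_0,k_1)+k_0$ explicitly, which is exactly what the paper's chain of equalities uses implicitly when it rewrites $\rho(k_0-1,k_1)\cdot(\theta+\lambda)(k_0-1,k_1)$ and $\rho(k_0,k_1+1)\cdot\theta(k_0,k_1+1)$ in terms of the $(k_0,k_1)$ quantities.
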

      
  \begin{proof}
    \label{pf:linking_23_and_45}
    The proofs follow from direct computation.
    \begin{enumerate}
      \item Consider the contrapositive: $\neg\ikkt_3(k_0,k_1)\implies \ikkt_2(k_0,k_1)$.
      Since $\neg\ikkt_3(k_0,k_1)\iff(\theta+\lambda)(k_0,k_1)<x_{k_0+1}^0$, and since $x^0$ is sorted, because $x_{k_0+1}^0\leq x_{k_0}^0$, it follows that $(\theta+\lambda)(k_0,k_1)<x_{k_0}^0$.
      
      \item Since $\rho(k_0,k_1)>0$ for all valid $k_0$ and $k_1$, it holds that
      \begin{talign*}
        \neg\ikkt_3(k_0-1,k_1) &\iff (\theta+\lambda)(k_0-1,k_1) < x_{(k_0-1)+1}^0 = x_{k_0}^0\\
        &\iff k\sum_{i=(k_0-1)+1}^{k_1}x_i^0 + (k_1-k)\bigl(\sum_{i=1}^{k_0-1}x_i^0 - r\bigr) < (\rho + 2k-k_1)\cdot x_{k_0}^0\\
        &\iff \rho(k_0,k_1)\cdot(\theta+\lambda)(k_0,k_1) + (2k-k_1)\cdot x_{k_0}^0 < (\rho + 2k-k_1)\cdot x_{k_0}^0\\
        &\iff (\theta+\lambda)(k_0,k_1) < x_{k_0}^0 \iff \ikkt_2(k_0,k_1).
      \end{talign*}

      \item Consider the contrapositive: $\neg\ikkt_5(k_0,k_1)\implies \ikkt_4(k_0,k_1)$.
      Since $\neg\ikkt_5(k_0,k_1)\iff\theta(k_0,k_1)\leq x_{k_1+1}^0$, and because $x_{k_1+1}^0\leq x_{k_1}^0$, it follows that $\theta(k_0,k_1) \leq x_{k_1}^0$.
      
      \item Since $\rho(k_0,k_1)>0$ for all valid $k_0$ and $k_1$, it holds that
      \begin{talign*}
        \hspace*{-\leftmargin}
        \ikkt_4(k_0,k_1+1)&\iff x_{k_1+1}^0 - \theta(k_0,k_1+1)\geq0\\
        &\iff (\rho(k_0,k_1)+k_0)x_{k_1+1}^0 \geq k_0\sum_{i=k_0+1}^{k_1}x_i^0 + k_0 x_{k_1+1}^0 - (k-k_0)\bigl(\sum_{i=1}^{k_0} x_i^0 - r\bigr)\\
        &\iff x_{k_1+1}^0 \geq \theta(k_0,k_1)\iff \neg\ikkt_5(k_0,k_1).
      \end{talign*}
    \end{enumerate}
    Thus the claims are proved.
  \end{proof}

  \begin{claim}[Linking $\ikkt_2(k_0)$ \& $\ikkt_2(k_0+j)$ and $\ikkt_3(k_0)$ \& $\ikkt_3(k_0+j)$]
    \label{lem:linking_22j}
    Fix $k_1\in\{k,\ldots,n\}$.
    The following are true.
    \begin{enumerate}
      \item Let $k_0^-\in\{0,\ldots,k-1\}$ be such that $\neg\ikkt_2(k_0^-,k_1)$.
      Then $\neg\ikkt_2(k_0',k_1)$ for any $k_0^-\leq k_0'$.\label{lem:linking_22j_neg}
      
      \item Let $k_0^+\in\{0,\ldots,k-1\}$ be such that $\ikkt_2(k_0^+,k_1)$.
      Then $\ikkt_2(k_0',k_1)$ for any $k_0'\leq k_0^+$.\label{lem:linking_22j_pos}

      \item Let $k_0^-\in\{0,\ldots,k-1\}$ be such that $\neg\ikkt_3(k_0^-,k_1)$.
      Then $\neg\ikkt_3(k_0',k_1)$ for any $k_0'\leq k_0^-$.
      
      \item Let $k_0^+\in\{0,\ldots,k-1\}$ be such that $\ikkt_3(k_0^+,k_1)$.
      Then $\ikkt_3(k_0',k_1)$ for any $k_0^+\leq k_0'$.
    \end{enumerate}
    This means that for every $k_1$, the following sets are contiguous
    \begin{alignat*}{4}
      K_0^2(k_1)&\coloneqq \{k_0\in\{0,\ldots,k-1\}:\ikkt_2(k_0,k_1)\}\;\text{and}\;
      &K_0^{\neg2}(k_1)&\coloneqq\{k_0\in\{0,\ldots,k-1\}:\neg\ikkt_2(k_0,k_1)\},\\*
      K_0^3(k_1)&\coloneqq \{k_0\in\{0,\ldots,k-1\}:\ikkt_3(k_0,k_1)\}\text{ and }
      &K_0^{\neg3}(k_1)&\coloneqq\{k_0\in\{0,\ldots,k-1\}:\neg\ikkt_3(k_0,k_1)\}.
    \end{alignat*}
  \end{claim}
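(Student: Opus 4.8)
The plan is to \emph{derive} all four items --- and hence the contiguity statement --- from \cref{lem:linking_23_and_45} by a short finite induction on $k_0$ with $k_1$ held fixed; no new ``direct computation'' is needed beyond that claim. The starting observation is that items~1 and~2 are logically equivalent to one another: since $K_0^{2}(k_1)$ and $K_0^{\neg2}(k_1)$ partition $\{0,\ldots,k-1\}$, the assertion that $K_0^{\neg2}(k_1)$ is upward closed (item~1) is the same as the assertion that $K_0^{2}(k_1)$ is downward closed (item~2); likewise items~3 and~4 are equivalent to each other by complementation. So it suffices to prove items~1 and~3.

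For item~1 it is enough to establish the one-step implication $\neg\ikkt_2(k_0,k_1)\implies\neg\ikkt_2(k_0+1,k_1)$ for every $k_0\in\{0,\ldots,k-2\}$ and then iterate up to $k_0=k-1$. To prove this implication I would chain the two halves of \cref{lem:linking_23_and_45}: from $\neg\ikkt_2(k_0,k_1)$, its first half yields $\ikkt_3(k_0,k_1)$; and its second half, applied at the index $k_0+1\in\{1,\ldots,k-1\}$, reads $\ikkt_2(k_0+1,k_1)\iff\neg\ikkt_3(k_0,k_1)$, whose right-hand side is now false, so $\neg\ikkt_2(k_0+1,k_1)$ holds. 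For item~3 it is enough to establish $\neg\ikkt_3(k_0,k_1)\implies\neg\ikkt_3(k_0-1,k_1)$ for every $k_0\in\{1,\ldots,k-1\}$ and iterate downward to $k_0=0$: here I would use the contrapositive of the first half of \cref{lem:linking_23_and_45} ($\neg\ikkt_3(k_0,k_1)\implies\ikkt_2(k_0,k_1)$) followed by its second half ($\ikkt_2(k_0,k_1)\iff\neg\ikkt_3(k_0-1,k_1)$), which together give the desired implication.

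To conclude, an upward-closed (respectively, downward-closed) subset of the totally ordered finite set $\{0,\ldots,k-1\}$ is a suffix (respectively, a prefix) of it and is therefore contiguous; applying this to items~1--4 shows that each of $K_0^{2}(k_1)$, $K_0^{\neg2}(k_1)$, $K_0^{3}(k_1)$, and $K_0^{\neg3}(k_1)$ is contiguous, which is exactly the displayed conclusion.

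I do not expect a genuine obstacle here: the only care needed is endpoint bookkeeping --- the inductions simply terminate at $k_0=k-1$ (respectively, $k_0=0$), where nothing is left to prove, and one must confirm that the shifted index $k_0\pm1$ at which \cref{lem:linking_23_and_45} is invoked stays in the range where that claim is valid, which it does because its second half is only used at indices in $\{1,\ldots,k-1\}$. If one preferred a proof not routed through \cref{lem:linking_23_and_45}, the alternative is to compute the one-step difference $\Delta_{k_0}(\theta+\lambda)(k_0,k_1)$ in closed form --- the same kind of computation used in the proof of \cref{lem:satisfy_134} --- and to exhibit a single sign change in $k_0$ of the quantities $x^0_{k_0}-(\theta+\lambda)(k_0,k_1)$ and $(\theta+\lambda)(k_0,k_1)-x^0_{k_0+1}$; but the reduction above is shorter and reuses machinery already in place.
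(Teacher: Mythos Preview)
Your argument is correct and takes a genuinely different route from the paper's own proof. The paper establishes each of the four items \emph{independently} by direct algebraic manipulation of the closed-form expression $\eta(k_0)\coloneqq\rho(k_0,k_1)\cdot(\theta+\lambda)(k_0,k_1)$: for instance, for item~1 it shows explicitly that $x^0_{k_0}\leq(\theta+\lambda)(k_0,k_1)$ implies $x^0_{k_0+1}\leq(\theta+\lambda)(k_0+1,k_1)$ by expanding $(\theta+\lambda)(k_0+1,k_1)$ in terms of $\eta(k_0)$ and cancelling, and the other three items receive their own similar computations. Your approach instead observes that items~1--2 (and~3--4) are contrapositives of one another, and then derives the one-step implications $\neg\ikkt_2(k_0,k_1)\Rightarrow\neg\ikkt_2(k_0+1,k_1)$ and $\neg\ikkt_3(k_0,k_1)\Rightarrow\neg\ikkt_3(k_0-1,k_1)$ purely by chaining the two parts of \cref{lem:linking_23_and_45}. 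This is shorter and avoids any new algebra, at the cost of making \cref{lem:linking_22j} logically downstream of \cref{lem:linking_23_and_45}; the paper's self-contained computations keep the two claims independent. Your endpoint bookkeeping is also correct: part~2 of \cref{lem:linking_23_and_45} is only invoked at shifted indices in $\{1,\ldots,k-1\}$, exactly where its proof (which uses $\rho(k_0-1,k_1)>0$) applies.
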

  \begin{proof}
    The proof is by direct computation.
    For fixed $k_1$, suppress the dependence on $k_1$ where clear, and define $\eta(k_0)\coloneqq \rho(k_0)\cdot(\theta+\lambda)(k_0)$.
    \begin{enumerate}
      \item Let $k_0=k_0^-$.
      By induction, it suffices to show the claim for $j=1$, \ie that
      \begin{align*}
        (*) : x_{k_0} \leq (\theta+\lambda)(k_0) \implies (**) : x_{k_0+1}^0 \leq (\theta+\lambda)(k_0+1).
      \end{align*}
      But $(\theta+\lambda)(k_0+1) = \bigl(\eta(k_0) + (k_1-2k)x_{k_0+1}^0\bigr) / (\rho(k_0)+k_1-2k)$ so
      \begin{align*}
        (**) &\iff (\rho + k_1-2k)\cdot x_{k_0+1}^0 \leq \eta(k_0) + (k_1-2k)x_{k_0+1}^0\\
        &\iff x_{k_0+1}^0\leq(\theta+\lambda)(k_0).
      \end{align*}
      Therefore, since $x^0$ is sorted, it is clear that $(*)\implies(**)$.

      \item Let $k_0=k_0^+$.
      By induction, it suffices to show the claim for $j=1$, \ie that
      \begin{align*}
        (*) : x_{k_0}^0 > (\theta+\lambda)(k_0) \implies (**) : x_{k_0-1}^0 > (\theta+\lambda)(k_0-1).
      \end{align*}
      But $(\theta+\lambda)(k_0-1) = \bigl(\eta(k_0) + (2k-k_1)x_{k_0+1}^0\bigr) / (\rho(k_0)+2k-k_1)$ so
      \begin{align*}
        (**) &\iff \bigl(\rho(k_0) + (2k-k_1)\bigr)\cdot x_{k_0-1}^0 > \eta(k_0) + (2k-k_1)x_{k_0}^0\\
        &\iff \rho(k_0) \cdot x_{k_0-1}^0 > \eta(k_0) + (2k-k_1)\cdot \bigl(x_{k_0}^0 - x_{k_0-1}^0\bigr).
      \end{align*}
      For contradiction, suppose that $(**)$ does not hold.
      Then dividing by $\rho(k_0)>0$,
      \begin{align*}
        \neg(**) &\iff x_{k_0-1}^0 \leq (\theta+\lambda)(k_0) \frac{2k-k_1}{\rho(k_0)}\cdot(x_{k_0}^0 - x_{k_0-1}^0)\\
        &\overset{(a)}{\implies} x_{k_0-1}^0 < x_{k_0-1}^0 \frac{2k-k_1}{\rho(k_0)}\cdot(x_{k_0}^0 - x_{k_0-1}^0)\\
        &\iff 0 < \bigl(\rho(k_0) + 2k-k_1\bigr)\cdot(x_{k_0}^0 - x_{k_0-1}^0)\implies 0<0
      \end{align*}
      where $(a)$ follows from $(*)$, and $x_{k_0}^0-x_{k_0-1}^0\leq0$ follows by since $x^0$ is sorted.

      \item Let $k_0=k_0^-$.
      By induction, it suffices to show the claim for $j=1$, \ie that
      \begin{align*}
        (*) : (\theta+\lambda)(k_0) < x_{k_0+1} \implies (**) : (\theta+\lambda)(k_0-1) < x_{k_0}^0.
      \end{align*}
      But $(\theta+\lambda)(k_0-1) = \bigl(\eta(k_0) + (2k-k_1)x_{k_0}^0\bigr) / (\rho(k_0)+2k-k_1)$ so
      \begin{align*}
        (**) &\iff \eta(k_0) + (2k-k_1)\cdot x_{k_0}^0 < (\rho + 2k-k_1)\cdot x_{k_0}^0\\
        &\iff (\theta+\lambda)(k_0)<x_{k_0}^0.
      \end{align*}
      Therefore, since $x^0$ is sorted, it is clear that $(*)\implies(**)$.
      
      \item Let $k_0=k_0^+$.
      By induction, it suffices to show the claim for $j=1$, \ie that
      \begin{align*}
        (*) : (\theta+\lambda)(k_0)\geq x_{k_0+1} \implies (**) : (\theta+\lambda)(k_0+1)\geq x_{k_0+2}^0.
      \end{align*}
      But $(\theta+\lambda)(k_0+1) = \bigl(\eta(k_0) + (k_1-2k)x_{k_0+1}^0\bigr) / (\rho(k_0)+k_1-2k)$ so
      \begin{align*}
        (**) &\iff \eta(k_0) + (k_1-2k)x_{k_0+1}^0 \geq (\rho + (k_1-2k))x_{k_0+2}^0\\
        &\overset{(a)}{\impliedby} \rho(k_0)\cdot x_{k_0+1}^0 \geq \rho(k_0)\cdot x_{k_0+2}^0 + (k_1-2k)\cdot(x_{k_0+2}^0-x_{k_0+1}^0)\\
        &\iff (\rho(k_0)+k_1-2k)\cdot(x_{k_0+1}^0-x_{k_0+2}^0)\geq0
      \end{align*}
      where $(a)$ follows from $(*)$, and $x_{k_0+1}^0-x_{k_0+2}^0\geq0$ follows since $x^0$ is sorted.
    \end{enumerate}
    Thus the claims are proved.
  \end{proof}

  \subsubsection*{Difference conditions}
  Next we summarize some conditions based on the successive differences of $\theta$, $\lambda$, and $\theta+\lambda$.
  To do so, it is useful to introduce the discrete difference operator $\Delta$ of a function $f(x,y)$ of two arguments, defined as $\Delta_x f(x,y) \coloneqq f(x+1,y) - f(x),\quad \Delta_y f(x,y) \coloneqq f(x,y+1) - f(x,y)$.
  We will be concerned with differences in both arguments of the index-pair $(k_0,k_1)$.
  \begin{claim}[$\Delta_{k_0}$ \& $\neg\ikkt_2$ and $\Delta_{k_1}$ \& $\neg\ikkt_5$]
    \label{lem:Delta_k0k1}
    For fixed $k_1\in\{k,\ldots,n\}$, it holds that
    \begin{enumerate}
      \item $\Delta_{k_0}\theta(k_0-1,k_1) \geq 0 \iff \neg\ikkt_2(k_0,k_1)$;\label{lem:Delta_k0_24}%
      \item $\Delta_{k_0}\lambda(k_0-1,k_1)\leq0 \iff \neg\ikkt_2(k_0,k_1)$;\label{lem:Delta_k0_21}%
      \item $\Delta_{k_1}\theta(k_0,k_1)\geq0 \iff \neg\ikkt_5(k_0,k_1)$;\label{lem:Delta_k1_25}%
      \item $\Delta_{k_1}\lambda(k_0,k_1)\geq0 \iff \neg\ikkt_5(k_0,k_1)$;\label{lem:Delta_k1_15}%
      \item $\Delta_{k_1}(\theta+\lambda)(k_0,k_1) \geq 0 \iff \neg\ikkt_5(k_0,k_1)$.\label{lem:Delta_k1_35}%
    \end{enumerate}
  \end{claim}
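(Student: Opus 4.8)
The plan is a bookkeeping argument that reduces each of the five equivalences to a single scalar comparison. Fix an index-pair $(k_0,k_1)$ with $k_0\in\{0,\ldots,k-1\}$ and $k_1\in\{k,\ldots,n\}$, write $\rho=\rho(k_0,k_1)=k_0(k_1-k_0)+(k-k_0)^2>0$, $\theta=\theta(k_0,k_1)$, $\lambda=\lambda(k_0,k_1)$, and abbreviate $A=\ind_\alpha^\top x_\alpha^0=\sum_{i=1}^{k_0}x_i^0$ and $P=\ind_\beta^\top x_\beta^0=\sum_{i=k_0+1}^{k_1}x_i^0$, so that \eqref{eq:candidate_thetalambdarho_k0k1} becomes $\rho\theta=k_0P-(k-k_0)(A-r)$ and $\rho\lambda=(k-k_0)P+(k_1-k_0)(A-r)$. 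Viewing these two equations as a linear system in $(P,\,A-r)$ whose determinant is precisely $\rho$ and solving, I first record the two identities
\begin{align*}
  P=(k_1-k_0)\theta+(k-k_0)\lambda,\qquad A-r=k_0\lambda-(k-k_0)\theta,
\end{align*}
the second of which is just the budget row of \eqref{eq:kkt_maxksum_projection_sort_2} rearranged; these are the algebraic facts that force the cancellations below.

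Next I would treat the two admissible one-step moves. Incrementing $k_1$ leaves $\alpha$ unchanged, adds index $k_1+1$ to $\beta$ (so $P\mapsto P+x_{k_1+1}^0$), and gives $\rho(k_0,k_1+1)=\rho+k_0$; decrementing $k_0$ moves index $k_0$ out of $\alpha$ into $\beta$ (so $A\mapsto A-x_{k_0}^0$, $P\mapsto P+x_{k_0}^0$) and gives $\rho(k_0-1,k_1)=\rho+(2k-k_1)$. Plugging these updates into \eqref{eq:candidate_thetalambdarho_k0k1} and simplifying the numerators with the two identities above, I expect the closed forms
\begin{align*}
  \Delta_{k_1}\theta(k_0,k_1)&=\frac{k_0\,(x_{k_1+1}^0-\theta)}{\rho+k_0},\quad
  \Delta_{k_1}\lambda(k_0,k_1)=\frac{(k-k_0)\,(x_{k_1+1}^0-\theta)}{\rho+k_0},\quad
  \Delta_{k_1}(\theta+\lambda)(k_0,k_1)=\frac{k\,(x_{k_1+1}^0-\theta)}{\rho+k_0},\\
  \Delta_{k_0}\theta(k_0-1,k_1)&=\frac{k\,\bigl((\theta+\lambda)-x_{k_0}^0\bigr)}{\rho+2k-k_1},\qquad
  \Delta_{k_0}\lambda(k_0-1,k_1)=\frac{(k_1-k)\,\bigl(x_{k_0}^0-(\theta+\lambda)\bigr)}{\rho+2k-k_1},
\end{align*}
noting that $\rho+k_0=\rho(k_0,k_1+1)>0$ and $\rho+2k-k_1=\rho(k_0-1,k_1)>0$.

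Finally I would read off the signs. By \eqref{eq:kkt_surplus}, $\neg\ikkt_5(k_0,k_1)\iff\theta\le x_{k_1+1}^0$ and $\neg\ikkt_2(k_0,k_1)\iff\theta+\lambda\ge x_{k_0}^0$. In the three $\Delta_{k_1}$ formulas the multipliers $k_0\ge0$, $k-k_0>0$, $k>0$ are nonnegative and the denominator is positive, so each difference is $\ge0$ precisely when $x_{k_1+1}^0-\theta\ge0$, i.e. precisely when $\neg\ikkt_5(k_0,k_1)$; this gives parts (3)--(5). For the $\Delta_{k_0}$ formulas, the multiplier $k>0$ makes $\Delta_{k_0}\theta(k_0-1,k_1)\ge0$ precisely when $(\theta+\lambda)-x_{k_0}^0\ge0$, while $\Delta_{k_0}\lambda(k_0-1,k_1)$ carries the opposite sign weighted by $k_1-k\ge0$, so $\Delta_{k_0}\lambda(k_0-1,k_1)\le0$ precisely when $(\theta+\lambda)-x_{k_0}^0\ge0$, i.e. $\neg\ikkt_2(k_0,k_1)$; this gives parts (1)--(2). (When a multiplier vanishes, i.e. $k_0=0$ in part (3) or $k_1=k$ in part (2), the corresponding difference is identically zero and the statement is to be read as the one-sided implications $\neg\ikkt_5\Rightarrow\Delta_{k_1}(\cdot)\ge0$ and $\neg\ikkt_2\Rightarrow\Delta_{k_0}\theta\ge0,\ \Delta_{k_0}\lambda\le0$, which hold unconditionally and are the directions invoked in \cref{lem:satisfy_134}.) The only real obstacle is organizing the substitutions so that the terms carrying $A$, $P$, and $r$ collapse to the single comparison between $\theta$ (or $\theta+\lambda$) and the relevant entry of $x^0$; the two identities of the first step are exactly what make this collapse occur, after which each formula is a two-line computation.
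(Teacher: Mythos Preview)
Your argument is correct and is a genuinely cleaner route than the paper's. The paper proves each of the five equivalences by brute-force algebra: it writes out $\Delta_{k_0}\theta$, $\Delta_{k_0}\lambda$, $\Delta_{k_1}\theta$, $\Delta_{k_1}\lambda$, $\Delta_{k_1}(\theta+\lambda)$ as rational functions in the raw sums $\sum x_i^0$ and $r$, then simplifies case by case using partial-fraction identities such as $(2k-k_1)/\rho = k/(k_0\rho) - 1/(k k_0)$ and $k_0(k_1-k)-\rho = k(k_0-k)$. Your approach instead inverts the $2\times 2$ system \eqref{eq:maxksum_projection_linearsystem} once to record $P=(k_1-k_0)\theta+(k-k_0)\lambda$ and $A-r=k_0\lambda-(k-k_0)\theta$, after which every difference collapses immediately to a single factor times $(x^0_{k_1+1}-\theta)$ or $((\theta+\lambda)-x^0_{k_0})$ over the new $\rho$. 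This organization makes the common mechanism behind all five parts transparent and eliminates the ad hoc identities. You are also more careful than the paper about the degenerate multipliers: when $k_0=0$ in part~(3) or $k_1=k$ in part~(2) the difference is identically zero and only the implication $\neg\ikkt_5\Rightarrow\Delta_{k_1}\theta\ge0$ (resp.\ $\neg\ikkt_2\Rightarrow\Delta_{k_0}\lambda\le0$) survives; the paper's parenthetical handling of the $k_1=k$ case in part~(2) asserts $\rho(k_0,k)>k(k-k_0)$, but in fact $\rho(k_0,k)=k(k-k_0)$, so your caveat is the accurate one. Since \cref{lem:satisfy_134} only invokes the implications in the directions you note, nothing downstream is affected.
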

  \begin{proof}
    For shorthand, let $\tau_0\coloneqq\sum_{i=1}^{k_0}x_i^0-r$, $\tau_1\coloneqq \sum_{i=1}^{k_1}x_i^0-r$, $\eta_\theta(k_0,k_1)\coloneqq \rho(k_0,k_1)\cdot\theta(k_0,k_1)$, $\eta_\lambda(k_0,k_1)\coloneqq \rho(k_0,k_1)\cdot\lambda(k_0,k_1)$, and $\eta_{(\theta+\lambda)}(k_0,k_1)\coloneqq \rho(k_0,k_1)\cdot(\theta+\lambda)(k_0,k_1)$.
    \begin{enumerate}
      \item 
      Compute $\Delta_{k_0}\theta(k_0-1,k_1)\coloneqq\theta(k_0,k_1) - \theta(k_0-1,k_1) = \frac{(2k-k_1)\cdot \eta_\theta(k_0,k_1) - \rho(k_0,k_1)\cdot (k x_{k_0}^0 - \tau_1)}{\rho(k_0,k_1)\cdot \bigl(\rho(k_0,k_1) + 2k - k_1\bigr)}$.
      Then
      \begin{talign*}
        \Delta_{k_0}\theta(k_0-1,k_1) \geq0 &\overset{(a)}{\iff} (2k-k_1)\cdot \eta_\theta(k_0,k_1) - \rho(k_0,k_1)\cdot (k x_{k_0}^0 - \tau_1) \geq0\\
        &\iff \frac{(2k-k_1)\cdot\eta_\theta(k_0,k_1) + \rho(k_0,k_1)\cdot\tau_1}{\rho(k_0,k_1)\cdot k} \geq x_{k_0}^0
        \overset{(b)}{\iff} \frac{k\cdot \eta_\theta(k_0,k_1)}{k_0\cdot \rho(k_0,k_1)} - \frac{\eta_\theta(k_0,k_1)}{k\cdot k_0} + \frac{\tau_1}{k} \geq x_{k_0}^0\\
        &\iff \frac{k\sum_{i=k_0+1}^{k_1} x_i^0 - \frac{k\cdot(k-k_0)}{k_0}\tau_0}{\rho(k_0,k_1)} - \Bigl(\frac1k \sum_{i=k_0+1}^{k_1} x_i^0 - \frac{k-k_0}{k\cdot k_0}\tau_0\Bigr) + \frac{\tau_1}{k} \geq x_{k_0}^0\\
        &\overset{(c)}{\iff} \frac{k\sum_{i=k_0+1}^{k_1} x_i^0 - \frac{k\cdot(k-k_0)}{k_0}\tau_0}{\rho(k_0,k_1)} + \frac{\tau_0}{k} + \frac{k-k_0}{k\cdot k_0}\tau_0 \geq x_{k_0}^0\\
        &\iff \frac{k\sum_{i=k_0+1}^{k_1}x_i^0}{\rho(k_0,k_1)} - \Bigl(\frac{k\cdot(k-k_0)}{k_0\cdot\rho(k_0,k_1)} + \frac1k + \frac{k-k_0}{k\cdot k_0}\Bigr)\cdot \tau_0 \geq x_{k_0}^0\\
        &\overset{(d)}{\iff} \frac{k\sum_{i=k_0+1}^{k_1}x_i^0}{\rho(k_0,k_1)} - \frac{k_1-k}{\rho(k_0,k_1)}\cdot\tau_0 \geq x_{k_0}^0
        \iff (\theta+\lambda)(k_0,k_1) \geq x_{k_0}^0\\
        &\iff \neg\ikkt_2(k_0,k_1)
      \end{talign*}
      where $(a)$ follows since $\rho(k_0,k_1)>0$ for all valid arguments, $(b)$ follows from the partial fractions identity $(2k-k_1)/\rho(k_0,k_1) = \frac{k}{k_0\cdot \rho(k_0,k_1)} - \frac{1}{k\cdot k_0}$, $(c)$ follows from $\tfrac1k \tau_1 - \tfrac1k \sum_{i=k_0+1}^{k_1}x_i^0 = \tfrac1k \tau_0$, and $(d)$ follows from algebraic manipulation.

      \item
      Compute
      \begin{talign*}
        \hspace*{-\leftmargin}
        \lambda(k_0-1,k_1) &= \Bigl( (k-(k_0-1))\sum_{i=(k_0-1)+1}^{k_1}x_i^0 + (k_1 - (k_0-1))\Bigl(\sum_{i=1}^{k_0-1}x_i^0 - r\Bigr)\Bigr) / \rho(k_0-1,k_1)\\
        &= \bigl(\eta_\lambda(k_0,k_1) + (k-k_1)\cdot x_{k_0}^0 + \tau_1\bigr) / \bigl(\rho(k_0,k_1) + 2k - k_1\bigr)
      \end{talign*}
      and $\Delta_{k_0}\lambda(k_0-1,k_1) = \lambda(k_0,k_1) - \lambda(k_0-1,k_1)
      = \frac{(2k-k_1)\cdot \eta_\lambda(k_0,k_1) - \rho(k_0,k_1)\cdot\bigl((k-k_1)\cdot x_{k_0}^0 + \tau_1\bigr)}{\rho(k_0,k_1)\cdot\bigl(\rho(k_0,k_1) + 2k - k_1\bigr)}$.
      Then
      \begin{talign*}
        \Delta_{k_0}\lambda(k_0-1,k_1)\leq0 &\overset{(a)}{\iff} (2k-k_1)\cdot \eta_\lambda(k_0,k_1) - \rho(k_0,k_1)\cdot\bigl((k-k_1)\cdot x_{k_0}^0 + \tau_1\bigr)\leq0\\
        &\iff \frac{(2k-k_1)\cdot\eta_\lambda(k_0,k_1)}{\rho(k_0,k_1)} - \tau_1 \leq (k-k_1)\cdot x_{k_0}^0\\
        &\overset{(b)}{\iff} \frac{(2k-k_1)\cdot\eta_\lambda(k_0,k_1)}{\rho(k_0,k_1)\cdot(k-k_1)} - \frac{\rho(k_0,k_1)\cdot \tau_1}{\rho(k_0,k_1)\cdot (k-k_1)} \geq x_{k_0}^0
        \iff c_1 \sum_{i=k_0+1}^{k_1}x_i^0 + c_2 \cdot \tau_0 \geq x_{k_0}^0
      \end{talign*}
      where $(a)$ holds because $\rho(k_0,k_1)>0$ for all valid arguments, $(b)$ holds since $(k-k_1)<0$ for $k_1>k$ (and at $k_1=k$, it holds since $\rho(k_0,k)>k\cdot(k-k_0)$), and
      $
      c_1\coloneqq \frac{(2k-k_1)\cdot(k-k_0) - \rho(k_0,k_1)}{\rho(k_0,k_1)\cdot(k-k_1)}
      $
      and
      $
      c_2\coloneqq \frac{(2k-k_1)\cdot(k_1-k_0) - \rho(k_0,k_1)}{\rho(k_0,k_1)\cdot(k-k_1)}.
      $
      Then after simplification, $c_1 = k/\rho(k_0,k_1)$ and $c_2 = (k_1-k) / \rho(k_0,k_1)$
      so we identify $\Delta_{k_0}\lambda(k_0-1,k_1)\leq0 \iff (\theta+\lambda)(k_0,k_1)\leq x_{k_0}^0 \iff \neg\ikkt_2(k_0,k_1)$.

      \item %
      Compute $\Delta_{k_1}\theta(k_0,k_1) \coloneqq \theta(k_0,k_1+1) - \theta(k_0,k_1) = \frac{\rho(k_0,k_1)\cdot k_0\cdot x_{k_1+1}^0 - k_0\eta_\theta(k_0,k_1)}{\rho(k_0,k_1) \cdot (\rho(k_0,k_1) + k_0)}$.
      Then
      \begin{align*}
        \Delta_{k_1}\theta(k_0,k_1)\geq0 &\overset{(a)}{\iff} \rho(k_0,k_1)\cdot k_0\cdot x_{k_1+1}^0 \geq k_0 \eta_\theta(k_0,k_1)
        \iff x_{k_1+1}^0 \geq \theta(k_0,k_1) \iff \neg\ikkt_5(k_0,k_1).
      \end{align*}

      \item %
      Compute $\Delta_{k_1}\lambda(k_0,k_1) \coloneqq \lambda(k_0,k_1+1) - \lambda(k_0,k_1)
      = \frac{\rho(k_0,k_1) \cdot \bigl((k-k_0)x_{k_1+1}^0 + \tau_0\bigr) - k_0 \eta_\lambda(k_0,k_1)}{\rho(k_0,k_1)\cdot \bigl(\rho(k_0,k_1)+k_0\bigr)}$.
      Then
      \begin{align*}
        \Delta_{k_1}\lambda(k_0,k_1)\geq0 &\overset{(a)}{\iff} \rho(k_0,k_1) \cdot \bigl((k-k_0)x_{k_1+1}^0 + \tau_0\bigr) - k_0 \eta_\lambda(k_0,k_1)\geq0\\
        &\iff x_{k_1+1}^0 \geq \frac{k_0}{k-k_0}\frac{\eta_\lambda(k_0,k_1)}{\rho(k_0,k_1)} - \frac{\tau_0}{k-k_0}
        \overset{(b)}{\iff} x_{k_1+1}^0 \geq \theta(k_0,k_1)\iff\neg\ikkt_5(k_0,k_1)
      \end{align*}
      where $(a)$ holds because $\rho(k_0,k_1)>0$ for all valid arguments and $(b)$ holds because
      $\frac{k_0\cdot(k_1-k_0)}{\rho(k_0,k_1)\cdot(k-k_0)} - \frac{1}{k-k_0} = \frac{k_0-k}{\rho(k_0,k_1)}$
      by algebraic manipulation.

      \item %
      Consider the equivalent form of the original statement given by $\Delta_{k_1}(\theta+\lambda)(k_0,k_1) < 0 \iff \ikkt_5(k_0,k_1)$.
      Compute $\Delta_{k_1}(\theta+\lambda)(k_0,k_1) \coloneqq (\theta+\lambda)(k_0,k_1+1)-(\theta+\lambda)(k_0,k_1)$ giving
      $$\Delta_{k_1}(\theta+\lambda)(k_0,k_1) = \frac{\rho(k_0,k_1)\cdot\bigl(k\cdot x_{k_1+1}^0 + \tau_0\bigr) - k_0 \bigl(k\sum_{i=k_0+1}^{k_1} x_i^0 + (k_1-k)\tau_0\bigr)}{\rho(k_0,k_1)\cdot\bigl(\rho(k_0,k_1)+k_0\bigr)}.$$
      Then
      \begin{talign*}
        \Delta_{k_1}(\theta+\lambda)(k_0,k_1)<0 &\overset{(a)}{\iff} {\rho(k_0,k_1)\cdot\bigl(k\cdot x_{k_1+1}^0 + \tau_0\bigr) - k_0 \Bigl(k\sum_{i=k_0+1}^{k_1} x_i^0 + (k_1-k)\tau_0\Bigr)} < 0\\
        &\iff \rho(k_0,k_1) \cdot k x_{k_1+1}^0 < k_0 \Bigl(k\sum_{i=k_0+1}^{k_1}x_i^0 + (k_1-k)\cdot\tau_0\Bigr) - \rho(k_0,k_1)\cdot\tau_0\\
        &\iff \rho(k_0,k_1) \cdot k x_{k_1+1}^0 < k_0k \sum_{i=k_0+1}^{k_1}x_i^0 + \bigl(k_0(k_1-k)-\rho(k_0,k_1)\bigr)\cdot\tau_0\\
        &\overset{(b)}{\iff} \rho(k_0,k_1) \cdot k x_{k_1+1}^0 < k_0k \sum_{i=k_0+1}^{k_1}x_i^0 + k\cdot(k_0-k)\cdot\tau_0\\
        &\iff \rho(k_0,k_1) \cdot x_{k_1+1}^0 < k_0 \sum_{i=k_0+1}^{k_1}x_i^0 + \cdot(k_0-k)\cdot\tau_0\\
        &\iff x_{k_1+1}^0 < \frac{k_0 \sum_{i=k_0+1}^{k_1}x_i^0 + \cdot(k_0-k)\cdot\tau_0}{\rho(k_0,k_1)} = \theta(k_0,k_1)\\
        &\iff\kkt_5(k_0,k_1)>0.
      \end{talign*}
      
      where $(a)$ follows because $\rho>0$ for all valid arguments and $(b)$ follows from the identity $k_0(k_1-k)-\rho(k_0,k_1)=k(k_0-k)$.
    \end{enumerate}
    Thus the claims are proved.
  \end{proof}
\end{toappendix}

\subsection{Partial sorting}
\label{sec:partial_sorting}
Until now, the sorted problem \eqref{eq:maxksum_projection_sort} has assumed that a full sorting permutation is given and has been applied to the input data $x^0 \in \R^n$.
However, it is of significant practical interest to note that since the elements of $\bar\gamma$ are unperturbed, it is only necessary that the initial permutation that sort the largest $L\geq\bar{k}_1 \equiv n - \abs{\bar\gamma}$ indices of $x^0\in\R^n$.
Let us call a permutation with $L\geq \bar{k}_1$ an ``optimal'' permutation, which can be obtained offline in $O(L \log n)$ by \texttt{heapsort}.
When $k\ll n$, then $\abs{\gamma}$ may be close to $n$, and the initial cost of obtaining an optimal permutation may be greatly reduced.
Since $\bar{k}_1$ is not known at runtime, though, determining whether a candidate permutation is optimal \emph{a priori} is not possible.
However, in the following proposition, we provide an implementable condition for checking whether the result of a projection based on a candidate permutation is indeed optimal.

On the other hand, we may instead seek to construct an optimal permutation in an ``as-needed'' fashion by embedding the sorting within the solution procedure.\footnote{The implementation of this approach was inspired by a question posed by Eric Sager Luxenberg on an earlier version of this paper: \url{https://arxiv.org/abs/2310.07224v1}.}
That is, since \cref{alg:projection_maxksum_lcp,,alg:projection_maxksum_snake} only inspect elements of $x^0$ in a contiguous and increasing subset of $\{1,\ldots,n\}$, the sorting may be performed online.
This leads to an $O(\bar{k}_1 \log n)$ overall procedure, despite the fact that $\bar{k}_1$ is not known at runtime, as summarized in the following proposition.

\begin{proposition}
  \label{prop:partialsort}
  The following two claims hold for any given $x^0\in\R^n$:
  \begin{enumerate}
      \item Let $\hat{\pi}$ be a given permutation of $x^0$ (not necessarily in the nonincreasing order).
      Define $\hat{x}\coloneqq\proj_{\maxsumball{k}{r}}{x^0_{\hat{\pi}}}$ and $\tilde{x}\coloneqq\proj_{\maxsumball{k}{r}}{x^0}$ as the solutions of the candidate and unsorted problems, respectively, and let $\hat{k}_1$ be the index corresponding to $\hat{x}$ in \eqref{eq:order_structure}.
      If (i) $\hat{x}_{\hat{k}_1} > (x^0_{\hat\pi})_{i}$ for all $i\in\{\hat{k}_1+1,\ldots,n\}$, and (ii) the elements in  $\hat{x}_{1:\hat{k}_1}$ are sorted in the nonincreasing order, then one can obtain $\tilde{x}$ via $(\tilde{x}_{\hat\pi})_{1:\hat{k}_1} = \hat{x}_{1:\hat{k}_1}$ and $(\tilde{x}_{\hat\pi})_{\hat{k}_1+1:n} = (x^0_{\hat{\pi}})_{\hat{k}_1+1:n}$, \ie $\hat{\pi}$ is an optimal permutation.

      \item \blue{
      A solution $\tilde{x}$ to the unsorted problem can be obtained in $O(\bar{k}_1 \log n)$ operations, where $\bar{k}_1=n-\abs{\bar\gamma}$ is the second component of the unique optimal index-pair satisfying KKT conditions \eqref{eq:kkt_surplus}, which is unknown-at-runtime but can be identified dynamically.
      }
  \end{enumerate}
\end{proposition}
\begin{proof}
  Let $\bar\pi$ be a full (and hence optimal) sorting permutation of $x^0$ so that $x^0_{\bar\pi} = \sorth{x}^0$.
  The KKT conditions require a solution to satisfy $\bar{x}_{\bar{k}_1} \equiv \bar\theta > (x^0_{\bar\pi})_{\bar{k}_1+1}$.
  By the ordering on $x_{\bar\pi}^0$, it holds that $(x_{\bar\pi}^0)_i\leq (x^0_\pi)_{\bar{k}_1+1}$ for all $i\geq \bar{k}_1+1$.
  If $\hat{x}$ satisfies all the KKT conditions associated with the $\hat\pi$-permuted problem and $(\hat{x}_{\hat\pi})_{\hat{k}_1} > (x^0_{\hat\pi})_{i}$ for all $i\in \{\hat{k}_1+1,\ldots,n\}$, then it satisfies the KKT conditions of the fully sorted problem.

  \blue{
  To justify the $O(\bar{k}_1\log(n))$ complexity, we argue that the \texttt{heapsort} algorithm can be embedded within the iterative approaches of \cref{alg:projection_maxksum_snake,,alg:projection_maxksum_lcp}.
  We provide an argument for \cref{alg:projection_maxksum_lcp} and note that \cref{alg:projection_maxksum_snake} can be handled similarly.
  Explicitly, construct a (binary) max-heap based on the input vector $x^0$ in $O(n)$ cost.
  The largest element of any heap can be extracted in $O(\log n)$ time.
  Therefore, sorted elements $\sorth x^0_1,\ldots,\sorth x^0_\ell$ can be obtained in $O(\ell\log n)$ time by by extracting the maximum element of the (successively modified) heap $\ell$ times.
  Next, given a suboptimal candidate index-pair $(k_0,k_1)$, the next pivot requires inspecting either $\sorth x^0_{k_0-1}$ or $\sorth x^0_{k_1+1}$; the former can be stored from the initial extraction process, and the latter can be obtained in $O(\log n)$ time from the modified heap.
  Since the optimal index-pair $(\bar{k}_0,\bar{k}_1)$ is unique, the procedure performs exactly $\bar{k}_1$ extractions for a cost of $O(\bar{k}_1\log n)$.}
\end{proof}
\cref{prop:partialsort} is useful in applications in which the solution to the projection problem is not expected to change significantly from iteration to iteration.
An important example of this is in solving a sequence of related projection problems (such as in linesearch or projected gradient descent) when the input vector does not change significantly.
\blue{For the first problem in the sequence, the online sorting property may be used to identify an initial permutation $\bar\pi^{(1)}\equiv\hat\pi^{(1)}$ in $O(\bar{k}_1\log n)$ via claim 2.
For subsequent problems (indexed by $\nu$), the previous problem's sorting permutation $\hat{\pi}^{(\nu)}$ may be used to warm-start the construction of an approximate sorting permutation $\hat{\pi}^{(\nu+1)}$}.

\subsection{Relation to the vector-\texorpdfstring{$k$}{k}-norm ball}
We now briefly state how the previous two approaches can be utilized when solving the related but distinct (and slightly more complicated) problem of projection onto the (Ky-Fan) vector-$k$-norm ball studied in \cite{wu2014moreau}.
The vector-$k$-norm ball of radius $r\geq0$ is defined by $\mathcal{V}_{(k)}^r \coloneqq \{z\in\R^n : \sum_{i=1}^k \sorth{\abs{z}}_{i} \leq r\}$, and the sorted vector-$k$-norm problem only differs from the sorted top-$k$-sum problem \eqref{eq:maxksum_projection_sort} by the additional constraint $z_{n}\geq0$.
That is, the sorted formulation
  \begin{align}
  \label{eq:vecknorm_projection_sort}
    \bar z &\coloneqq\argmin_{z\in\R^n}\bigl\{
      \tfrac12 \norm{z-\sorth{z}^0}_2^2 :
      \ind_k^\top z\leq r,\;
      z_i\geq z_{i+1},\;\forall i \in \{1,\ldots,n-1\},\;
      z_n\geq0
    \bigr\}
  \end{align}
has polyhedral region $\{z\in\R^n:Vz \leq v\}$ with data $V = \begin{bmatrix} (\ind_k^\top,\; 0_{n-k}^\top)\\ -E\end{bmatrix},\; E\coloneqq \begin{bmatrix} D\\ (0_{n-1}^\top,\;1)\end{bmatrix},\; v\coloneqq(r,0_n^\top)^\top$, where $D$ is the isotonic difference operator defined in \eqref{eq:maxksum_projection_sort_polyhedron}.
The penalized problem with PLCP data $M\coloneqq EE^\top$, $q\coloneqq E\,\sorth{z}^0$ and direction vector $d\coloneqq -E\,\ind_k=-e^k$ shares nearly identical structure with \eqref{eq:plcp}, so pivots can be performed in a similar manner to the approach outlined in \cref{alg:projection_maxksum_lcp}.

On the other hand, \cite{wu2014moreau} 
provide a two-step routine (Algorithm 4) for solving the vector-$k$-norm projection problem based on the observation that the $k^{\text{th}}$ largest value of the solution must satisfy (i) $\bar{z}_{[k]}=0$; or (ii) $\bar{z}_{[k]}>0$.
The first step identifies a solution satisfying condition (i), if one exists, in $O(k)$ complexity; otherwise if it does not exist, the second step identifies a solution satisfying condition (ii) by performing a grid search over all index-pairs $(k_0,k_1)$.
The second step is the algorithm that we refer to as the ``KKT grid-search'' method in \cref{sec:background}.
The KKT conditions of the second case coincide with the KKT conditions of the top-$k$-sum problem, so 
\cref{alg:projection_maxksum_snake} can be substituted for the second step in \cite[Algorithm 4]{wu2014moreau}, yielding a procedure with overall complexity of $O(n)$  on sorted input vector $\sorth{z}^0$.
Each step needs to  run sequentially, though, so the ESGS-based approach incurs an additional $O(k)$ cost in instances that are not solved in the first step.

\section{Numerical Experiments}
\label{sec:experiments}
To evaluate the performance of our proposed algorithms, we conduct a series of numerical experiments on synthetic datasets.
We implement the algorithms in \verb|Julia| \cite{julia} and execute the tests on a 125GB RAM machine with Intel(R) Xeon(R) W-2145 CPU @ 3.70GHz processors running \verb|Julia v1.9.1|.

The experimental problems were formulated based on the following protocol: the index $k$ and right-hand side $r$ are set as $k = \tau_k^\complement\cdot n$ and $r = \tau_r\cdot \maxsum_k{x^0}$, where $\tau_k^\complement = 1-\tau_k$ and $\tau_r$ take on values from the sets:
\begin{itemize}[itemsep=0pt,topsep=0pt]
  \item $\tau_r\in\{-8,\,-4,\,-2,\,-1,\,-1/2,\,-1/10,\,0,\, 1/10,\, 1/2,\, 9/10,\, 99/100,\, 999/1000\}$;
  \item $\tau_k^\complement\in\{1/10000,\, 1/1000,\, 1/100,\, 5/100,\, 1/10,\, 1/2,\, 9/10,\, 99/100,\, 999/1000,\, 9999/10000\}$.
\end{itemize}
For context,  in many practical scenarios (\eg the risk-averse  superquantile constrained problems), the values of $\tau_k^\complement$ typically \reftwo{fall} between 1\% and 10\%.
To facilitate more intuitive interpretation of our computational findings, initial vectors are generated uniformly from $[0,1]^n$ in double precision.
As $\tau_r$ approaches $1$, the projection problems tend to become easier because $\maxsum_k{x^0}\approx r$.
Conversely, as $\tau_r$ trends towards $-\infty$, the projection problems  become more challenging as the solution will have a substantial deviation from the original point.
The problem dimension $n$ is set from $n\in \{10^1,10^2,\ldots,10^7\}$, and $100$ instances are generated for each scenario unless stated otherwise.

The ESGS, PLCP, GRID (our implementation of the grid-search method from \cite{wu2014moreau}), \refone{and SSN (our implementation of the semismooth Newton method from \cite{li2021fast})} are written in \texttt{Julia} and use double precision for the experiments, though they can also handle arbitrary precision floats and rational data types.
\blue{The partition-based method \cite{davis2015algorithm} was not studied since it was found to be significantly slower than the SSN method in \cite{li2021fast}.}
The  code is available at \url{https://github.com/jacob-roth/top-k-sum}.
The finite-termination methods use a single core but can make use of \texttt{simd} operations.
The QP solver utilizes the barrier method provided by Gurobi \texttt{v10.0} to solve both the sorted \eqref{eq:maxksum_projection_sort} and unsorted \eqref{eq:maxksum_projection_unsorted} formulations, called GRBS and GRBU, respectively.
The feasibility and optimality tolerances are set to $10^{-9}$, the presolve option  to the default, and the method is configured to use up to 8 cores.
Model initialization time is not counted towards Gurobi's solve time.
Both Gurobi methods and the GRID method have time-limits of 10,000 seconds for each instance.

\begin{figure}[t]
  \centering
  \begin{tabular}{@{}c@{}p{1ex}@{}c@{}}
    \begin{tabular}{@{}c@{}}
      \resizebox{0.45\linewidth}{!}{\includegraphics{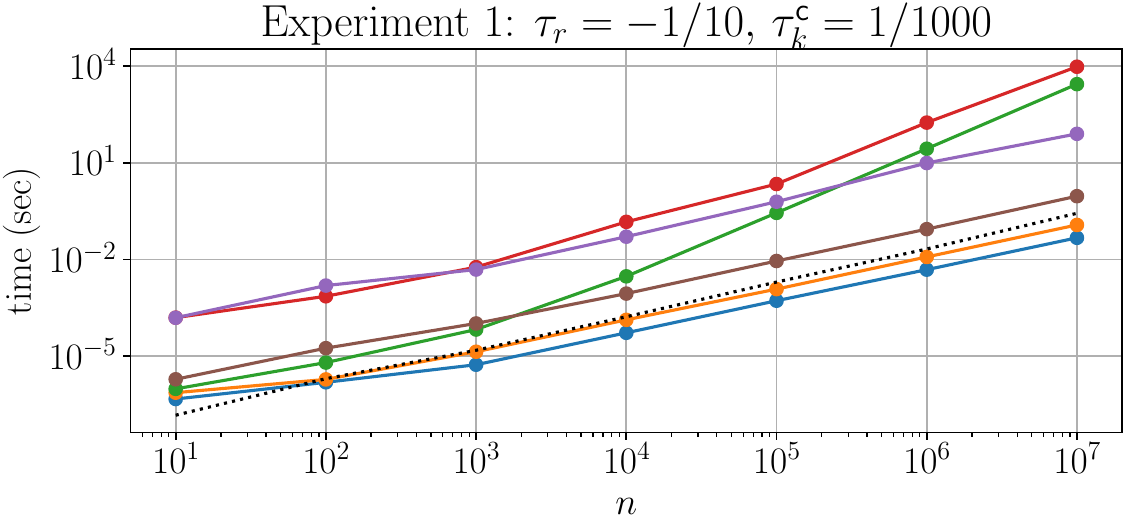}}
      \resizebox{0.45\linewidth}{!}{\includegraphics{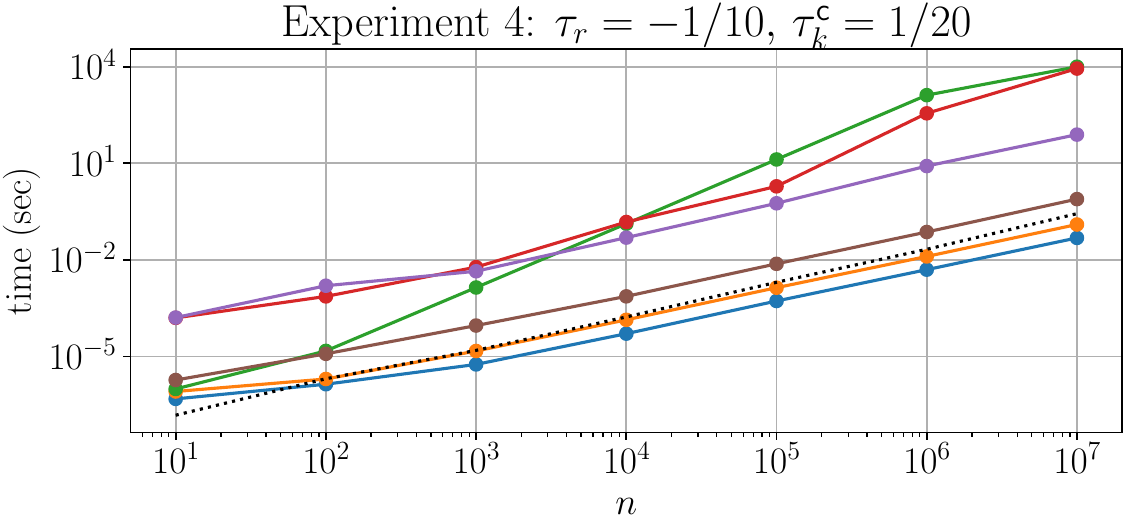}}\\
      \resizebox{0.45\linewidth}{!}{\includegraphics{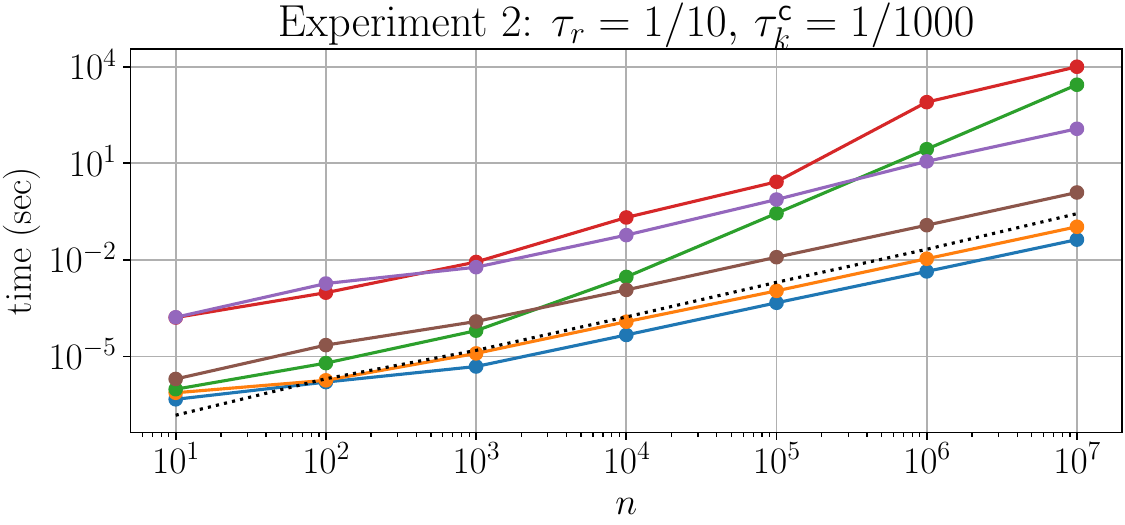}}
      \resizebox{0.45\linewidth}{!}{\includegraphics{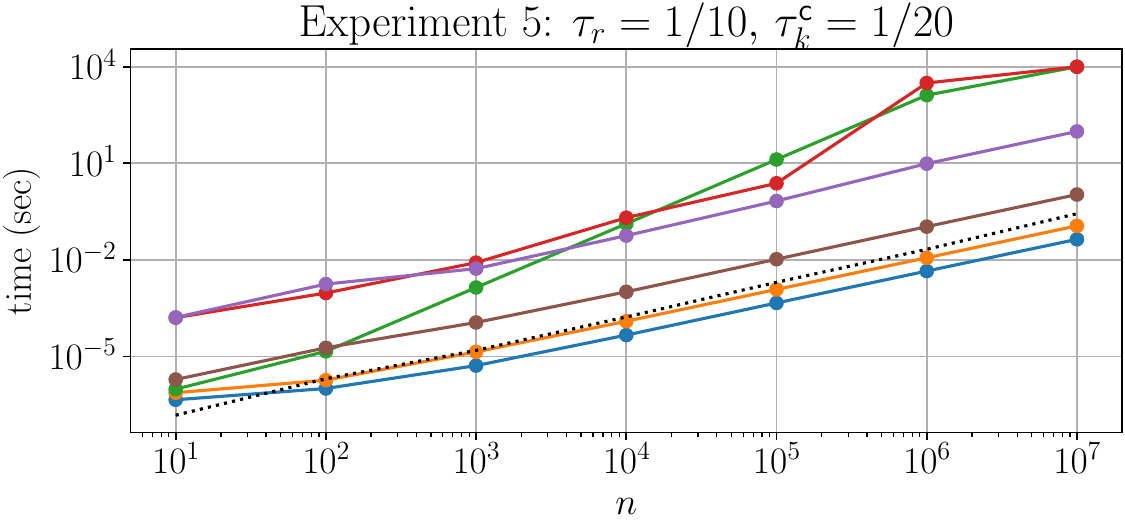}}\\
      \resizebox{0.45\linewidth}{!}{\includegraphics{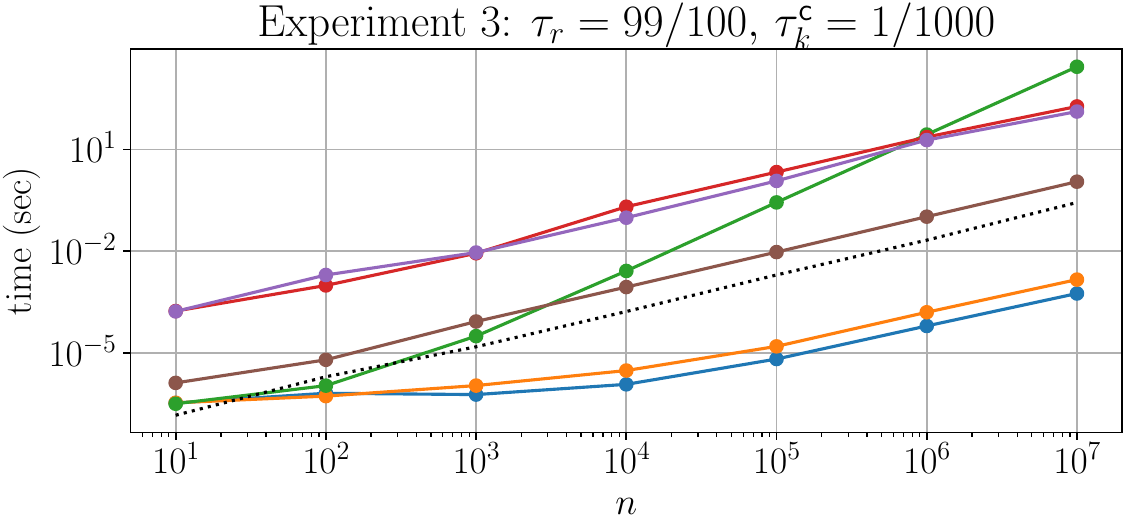}}
      \resizebox{0.45\linewidth}{!}{\includegraphics{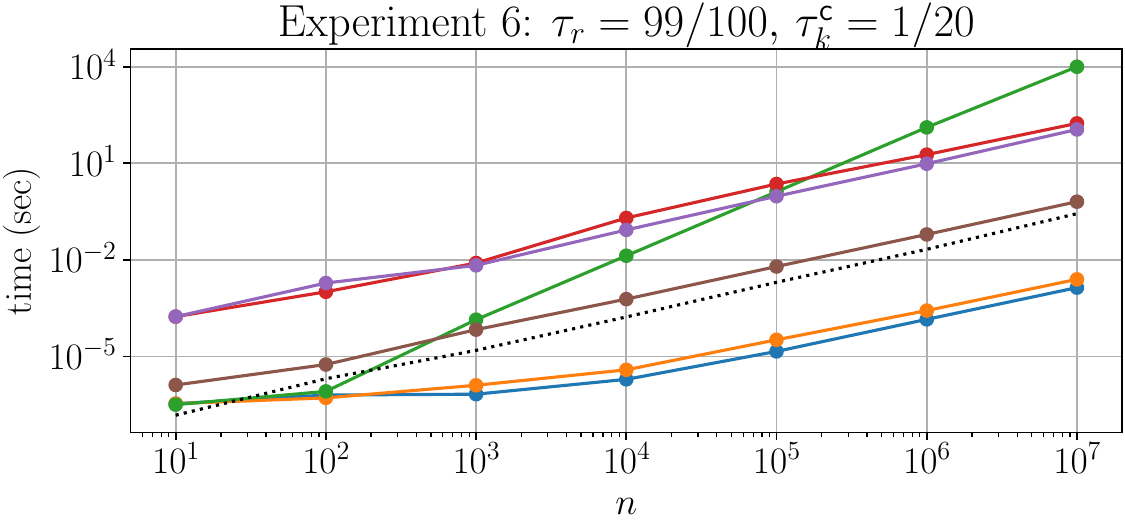}}\\
      \centering
      \resizebox{0.75\linewidth}{!}{\includegraphics[trim={0 0 0 7.5cm},clip]{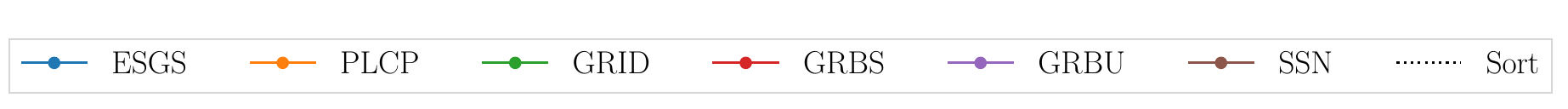}}
    \end{tabular}
  \end{tabular}
  \caption{\small
    Average total computation time excluding sort time and full sort time vs $n$.
    All results are averaged over 100 instances, except for methods GRID, GRBS, and GRBU with $n\in\{10^6,10^7\}$ in which a time-limit of $10^4$ seconds is imposed across 2 instances.
  }
  \label{fig:time_vs_n}
\end{figure}%

\begin{figure}
  \hspace*{-5mm}
  \begin{subfigure}[t]{0.32\linewidth}
    \resizebox{1.1\linewidth}{!}{\includegraphics[clip, trim=4mm 3mm 2mm 3mm]{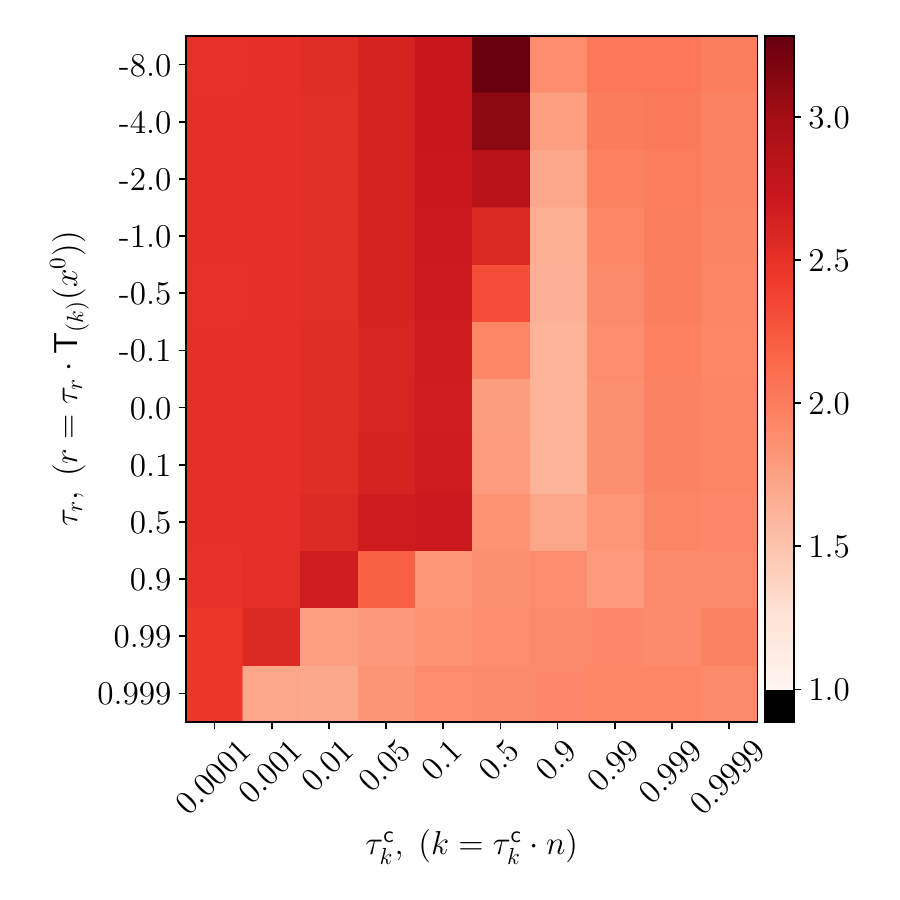}}
    \captionsetup{justification=raggedleft,singlelinecheck=false}
    \caption{\small PLCP vs ESGS at $n=10^7$.\!\!}
    \label{fig:relative_time:21}
  \end{subfigure}
  \hspace*{2mm}
  \begin{subfigure}[t]{0.32\linewidth}
    \resizebox{1.1\linewidth}{!}{\includegraphics[clip, trim=4mm 3mm 2mm 3mm]{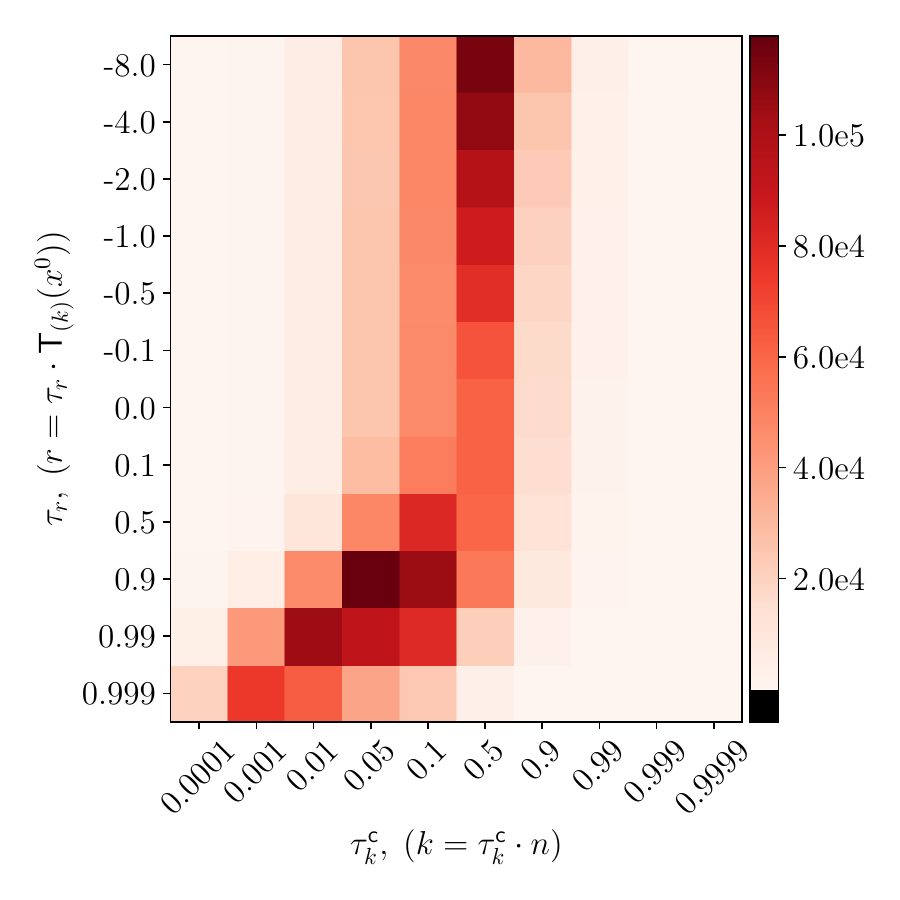}}
    \captionsetup{justification=raggedleft,singlelinecheck=false}
    \caption{\small GRID vs ESGS at $n=10^5$.}
    \label{fig:relative_time:31}
  \end{subfigure}
  \hspace*{2mm}
  \begin{subfigure}[t]{0.32\linewidth}
    \resizebox{1.1\linewidth}{!}{\includegraphics[clip, trim=4mm 3mm 2mm 3mm]{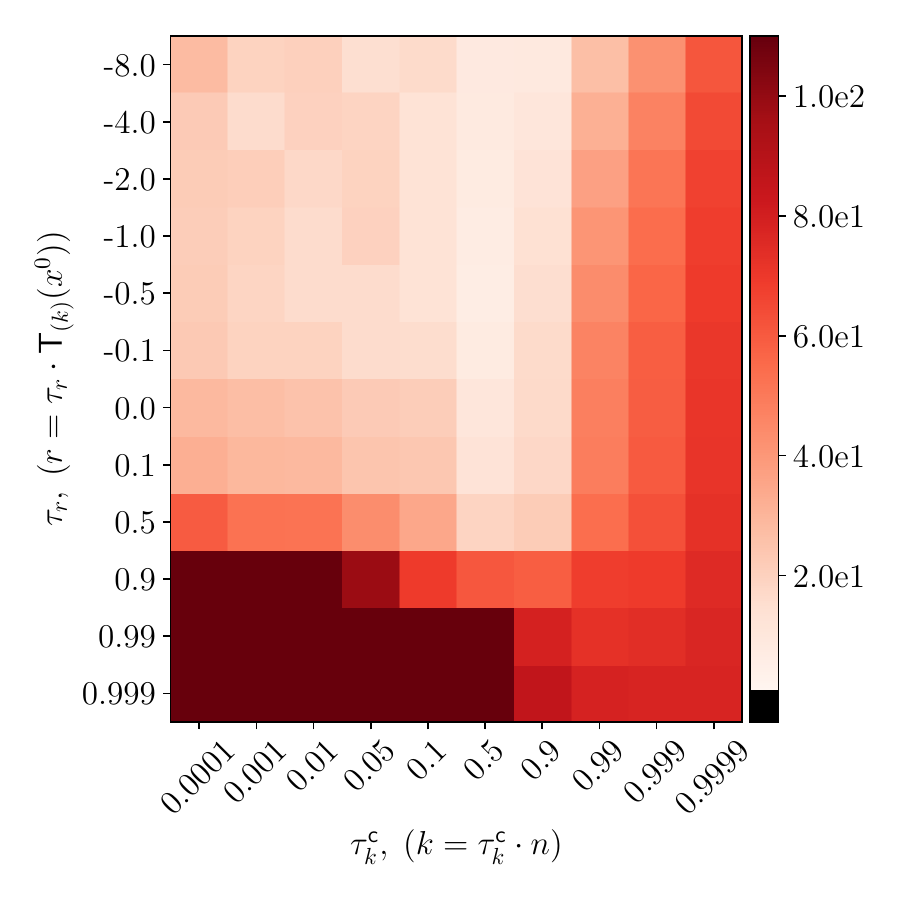}}
    \captionsetup{justification=raggedleft,singlelinecheck=false}
    \caption{\small SSN vs ESGS at $n=10^7$.\;\;\;}
    \label{fig:relative_time:61}
  \end{subfigure}
  \caption{
    \small 
    Computation time relative to ESGS averaged over 100 instances.
    A value of $c>0$ indicates that ESGS was $c$ times faster than the other method; a value of $c<0$ indicates that the other method was $c$ times faster than ESGS.
    Across all scenarios at $n=10^5$, the better of GRBS and GRBU was at best $\approx350$ times slower than ESGS (and never faster).
  }
  \label{fig:relative_time} 
\end{figure}

\begin{table}
  \vspace*{-5mm}
  \centering
  {\footnotesize%
      \begin{tabular}{lllllllll}
    \toprule 
     & \multicolumn{3}{l}{Experiment 1: $\tau_r=-1/10,\,\tau_k^\complement=1/1000$} \\
    \cmidrule{2-6}  & $n=10^{3}$ & $n=10^{4}$ & $n=10^{5}$ & $n=10^{6}$ & $n=10^{7}$ \\
    \midrule 
    ESGS & $\textbf{5.5e-6}$ (1e-6) & $\textbf{5.4e-5}$ (1e-5) & $\textbf{5.3e-4}$ (10e-5) & $\textbf{4.9e-3}$ (6e-5) & $\textbf{4.7e-2}$ (10e-4) \\
    PLCP & 1.4e-5 (2e-6) & 1.4e-4 (3e-5) & 1.2e-3 (2e-4) & 1.2e-2 (3e-4) & 1.2e-1 (3e-3) \\
    GRID & 6.8e-5 (8e-6) & 3.0e-3 (2e-4) & 2.8e-1 (1e-2) & 2.7e+1* (5e-2*) & 2.8e+3* (3e0*) \\
    GRBS & 5.8e-3 (4e-4) & 1.5e-1 (10e-3) & 2.2e+0 (2e-1) & 1.8e+2* (1e-1*) & 9.6e+3* (2e1*) \\
    GRBU & 4.9e-3 (3e-4) & 5.1e-2 (9e-3) & 6.2e-1 (10e-2) & 9.9e+0* (8e-2*) & 7.9e+1* (1e0*) \\
    SSN & 1.0e-4 (8e-6) & 8.8e-4 (3e-5) & 9.0e-3 (2e-4) & 8.8e-2 (4e-4) & 9.3e-1 (3e-2) \\
    \midrule 
     & \multicolumn{3}{l}{Experiment 2: $\tau_r=1/10,\,\tau_k^\complement=1/1000$} \\
    \cmidrule{2-6}  & $n=10^{3}$ & $n=10^{4}$ & $n=10^{5}$ & $n=10^{6}$ & $n=10^{7}$ \\
    \midrule 
    ESGS & $\textbf{4.9e-6}$ (1e-7) & $\textbf{4.7e-5}$ (8e-6) & $\textbf{4.6e-4}$ (7e-5) & $\textbf{4.4e-3}$ (5e-5) & $\textbf{4.3e-2}$ (6e-4) \\
    PLCP & 1.2e-5 (1e-6) & 1.2e-4 (2e-5) & 1.1e-3 (2e-4) & 1.1e-2 (2e-4) & 1.1e-1 (2e-3) \\
    GRID & 6.3e-5 (5e-6) & 2.9e-3 (2e-4) & 2.8e-1 (1e-2) & 2.7e+1* (6e-2*) & 2.8e+3* (4e0*) \\
    GRBS & 8.6e-3 (1e-3) & 2.1e-1 (1e-2) & 2.7e+0 (3e-1) & 8.0e+2* (2e0*) & 1.0e+4* ($-$*) \\
    GRBU & 5.9e-3 (5e-4) & 5.9e-2 (7e-3) & 7.5e-1 (1e-1) & 1.1e+1* (2e-1*) & 1.2e+2* (4e0*) \\
    SSN & 1.2e-4 (1e-5) & 1.2e-3 (4e-5) & 1.2e-2 (1e-3) & 1.2e-1 (6e-3) & 1.2e+0 (4e-2) \\
    \midrule 
     & \multicolumn{3}{l}{Experiment 3: $\tau_r=99/100,\,\tau_k^\complement=1/1000$} \\
    \cmidrule{2-6}  & $n=10^{3}$ & $n=10^{4}$ & $n=10^{5}$ & $n=10^{6}$ & $n=10^{7}$ \\
    \midrule 
    ESGS & $\textbf{6.0e-7}$ (1e-7) & $\textbf{1.2e-6}$ (3e-7) & $\textbf{6.7e-6}$ (1e-6) & $\textbf{6.3e-5}$ (7e-6) & $\textbf{5.7e-4}$ (1e-5) \\
    PLCP & 1.1e-6 (2e-7) & 3.1e-6 (10e-7) & 1.6e-5 (3e-6) & 1.6e-4 (7e-6) & 1.5e-3 (3e-5) \\
    GRID & 3.2e-5 (2e-6) & 2.6e-3 (5e-4) & 2.7e-1 (2e-2) & 2.7e+1* (7e-2*) & 2.7e+3* (4e0*) \\
    GRBS & 8.7e-3 (6e-4) & 2.1e-1 (1e-2) & 2.1e+0 (2e-1) & 2.3e+1* (4e-1*) & 1.8e+2* (3e-1*) \\
    GRBU & 9.2e-3 (4e-4) & 9.8e-2 (1e-2) & 1.2e+0 (1e-1) & 1.9e+1* (7e-1*) & 1.3e+2* (2e-1*) \\
    SSN & 8.6e-5 (9e-6) & 8.8e-4 (6e-5) & 9.5e-3 (6e-4) & 1.0e-1 (6e-3) & 1.1e+0 (6e-3) \\
    \midrule 
     & \multicolumn{3}{l}{Experiment 4: $\tau_r=-1/10,\,\tau_k^\complement=1/20$} \\
    \cmidrule{2-6}  & $n=10^{3}$ & $n=10^{4}$ & $n=10^{5}$ & $n=10^{6}$ & $n=10^{7}$ \\
    \midrule 
    ESGS & $\textbf{5.6e-6}$ (1e-6) & $\textbf{5.1e-5}$ (3e-6) & $\textbf{5.3e-4}$ (9e-5) & $\textbf{5.0e-3}$ (5e-5) & $\textbf{4.8e-2}$ (4e-4) \\
    PLCP & 1.5e-5 (2e-6) & 1.4e-4 (10e-6) & 1.4e-3 (2e-4) & 1.3e-2 (2e-4) & 1.3e-1 (2e-3) \\
    GRID & 1.4e-3 (9e-5) & 1.3e-1 (2e-3) & 1.3e+1 (1e-1) & 1.3e+3* (2e0*) & 1.0e+4* ($-$*) \\
    GRBS & 6.0e-3 (3e-4) & 1.5e-1 (9e-3) & 1.9e+0 (2e-1) & 3.6e+2* (3e0*) & 8.8e+3* (2e0*) \\
    GRBU & 4.4e-3 (2e-4) & 4.9e-2 (8e-3) & 5.7e-1 (7e-2) & 8.2e+0* (1e-1*) & 7.8e+1* (6e0*) \\
    SSN & 9.1e-5 (5e-6) & 7.3e-4 (3e-5) & 7.5e-3 (2e-4) & 7.3e-2 (2e-3) & 7.7e-1 (3e-2) \\
    \midrule 
     & \multicolumn{3}{l}{Experiment 5: $\tau_r=1/10,\,\tau_k^\complement=1/20$} \\
    \cmidrule{2-6}  & $n=10^{3}$ & $n=10^{4}$ & $n=10^{5}$ & $n=10^{6}$ & $n=10^{7}$ \\
    \midrule 
    ESGS & $\textbf{5.2e-6}$ (7e-7) & $\textbf{4.6e-5}$ (2e-6) & $\textbf{4.6e-4}$ (7e-6) & $\textbf{4.5e-3}$ (5e-5) & $\textbf{4.4e-2}$ (4e-4) \\
    PLCP & 1.4e-5 (3e-6) & 1.2e-4 (4e-6) & 1.2e-3 (2e-5) & 1.2e-2 (2e-4) & 1.1e-1 (2e-3) \\
    GRID & 1.4e-3 (8e-5) & 1.3e-1 (2e-3) & 1.3e+1 (1e-1) & 1.3e+3* (2e0*) & 1.0e+4* ($-$*) \\
    GRBS & 8.2e-3 (7e-4) & 2.0e-1 (1e-2) & 2.4e+0 (2e-1) & 3.1e+3* (2e1*) & 1.0e+4* ($-$*) \\
    GRBU & 5.3e-3 (3e-4) & 5.7e-2 (5e-3) & 6.7e-1 (5e-2) & 9.8e+0* (2e0*) & 9.8e+1* (4e0*) \\
    SSN & 1.1e-4 (9e-6) & 1.0e-3 (4e-5) & 1.0e-2 (6e-5) & 1.1e-1 (2e-3) & 1.1e+0 (6e-3) \\
    \midrule 
     & \multicolumn{3}{l}{Experiment 6: $\tau_r=99/100,\,\tau_k^\complement=1/20$} \\
    \cmidrule{2-6}  & $n=10^{3}$ & $n=10^{4}$ & $n=10^{5}$ & $n=10^{6}$ & $n=10^{7}$ \\
    \midrule 
    ESGS & $\textbf{6.7e-7}$ (2e-7) & $\textbf{1.9e-6}$ (2e-7) & $\textbf{1.4e-5}$ (3e-6) & $\textbf{1.4e-4}$ (3e-6) & $\textbf{1.4e-3}$ (2e-5) \\
    PLCP & 1.3e-6 (2e-7) & 3.8e-6 (1e-6) & 3.3e-5 (4e-6) & 2.7e-4 (6e-6) & 2.5e-3 (5e-5) \\
    GRID & 1.4e-4 (5e-5) & 1.3e-2 (2e-3) & 1.3e+0 (6e-2) & 1.3e+2* (2e0*) & 1.0e+4* ($-$*) \\
    GRBS & 8.0e-3 (5e-4) & 2.0e-1 (2e-2) & 2.3e+0 (3e-1) & 1.9e+1* (2e-1*) & 1.7e+2* (4e0*) \\
    GRBU & 6.8e-3 (3e-4) & 8.5e-2 (2e-2) & 9.5e-1 (7e-2) & 9.7e+0* (2e0*) & 1.1e+2* (4e0*) \\
    SSN & 6.8e-5 (9e-6) & 6.0e-4 (4e-5) & 6.2e-3 (2e-4) & 6.2e-2 (5e-4) & 6.4e-1 (4e-3) \\
    \midrule 
    Sort time (full) & 1.5e-5 & 1.7e-4 & 2.0e-3 & 2.1e-2 & 2.7e-1 \\
    Sort time (top-1\%) & 8.0e-6 & 7.2e-5 & 7.4e-4 & 7.0e-3 & 7.3e-2 \\
    \bottomrule 
    \end{tabular}

  }%
  \caption{\small Mean computation time and (standard deviation) in seconds for projecting an initial sorted vector $x^0$ drawn uniformly from $[0,1]^n$ onto $\maxsumball{k}{r}$ with $r = \tau_r\cdot\ind_k^\top x^0$ and $k=\tau_k^\complement\cdot n$.
  Statistics are computed over 100 ($2^{*}$) instances, and the fastest method is listed in \textbf{bold}.
  A dash ``$-$'' for standard deviation indicates that the experiment timed out in each instance.
  The full and top-1\% sort time is listed for reference and is not counted towards the solve time for any method.
  }
  \label{tab:time}
\end{table}

\subsubsection*{Results and discussion}
The numerical results are summarized in \cref{tab:time}, and \cref{fig:time_vs_n} and \cref{fig:relative_time}.

Across all values of $n$, our two proposed methods consistently outperform  the existing grid-search method, the Gurobi QP solver, \reftwo{and the SSN method}, often achieving improvements by several orders of magnitude.
The scaling profile in \cref{fig:time_vs_n} reveals the linear behavior of our proposed methods, the sparsity-exploiting inexact method GRBU based on the formulation \eqref{eq:maxksum_projection_unsorted}, \reftwo{and the SSN method}.
In contrast, the grid-search method exhibits quadratic scaling, and the sorted inexact method GRBS based on \eqref{eq:maxksum_projection_sort} exhibits performance that degrades in harder large-$n$ cases.
For problem sizes where $n\in\{10^6,10^7\}$, the solution time of the grid-search and Gurobi methods is on the order of minutes or hours; on the other hand, our methods obtain solutions in fractions of a second.
\reftwo{In addition, our procedures require significantly less computational time than the partial-sort time threshold, which the SSN method exceeds in each of the six experiments.}

In \cref{tab:time}, we highlight the computational results for a few experiments based on parameters which we expect to be of practical interest.
In Experiments 1 through 3, we fix $k$ to be a small proportion of $n$ and vary the budget $r$ from large to small; Experiments 4 through 6 follow a similar pattern, but with a larger value of $k$.
A clear takeaway from the results is that the (full) sorting procedure requires more time than solution procedure of our proposed algorithms for sorted input.
This highlights the importance of \cref{prop:partialsort}.
Sorting is also more costly than the grid-search method for small $n$, but for moderate-to-large $n$, the $O(k(n-k))$ computational cost of the grid-search method dominates the sorting time, even for very small $\tau_k$ as in Experiments 1 through 3.

Another observation from \cref{tab:time} is that the performance of the finite-termination algorithms are problem-dependent, with the grid-search method being the most variable.
On the other hand, the performance of the Gurobi QP solver based on the unsorted formulation \eqref{eq:maxksum_projection_unsorted} is relatively stable across different instances, in addition to being significantly more efficient than the sorted formulation \eqref{eq:maxksum_projection_sort}.
A plausible reason for this phenomenon is that the number of active constraints at the solutions are different for the two formulations: 
for instances in Experiments 3 and 6 with $n=10^6$, the sorted formulation yields averages of  $10^4$ and $9.6\times 10^3$ number of active constraints, compared to  $9.8\times 10^3$ and $3.8\times 10^3$ for the unsorted formulation.
Theoretically, only $\abs{\beta}$ number of constraints (see \cref{fig:mks_diagram}) should be binding for the unsorted formulation, whereas many more constraints could be binding for the sorted formulation.

\cref{fig:relative_time} compares the relative performance of the proposed methods across the entire spectrum of the parameters $r$ and $k$.
\cref{fig:relative_time:21} shows that ESGS performs about 1.5-2 times better (and never worse) than PLCP across the spectrum, though its advantage degrades in the easier instances in which $\tau_r\approx1$.
\cref{fig:relative_time:31} shows that ESGS performs significantly better than GRID in many cases of practical interest (see the bottom of the ``backwards L'' in the lower lefthand corner) where we observe excesses of $10^4$-fold run-time improvement.
\refone{\cref{fig:relative_time:61} shows that ESGS performs no less than about 5 times faster than SSN and can be orders of magnitude faster in instances where the input vector nearly satisfies the constraint.}

\section{Conclusions}
\label{sec:conclusions}
\nosectionappendix
We have provided two efficient, finite-termination algorithms,  PLCP and ESGS, that are capable of exactly solving the top-$k$-sum  \blue{sublevel set} projection problem \eqref{eq:maxksum_projection}.
When the input vector is unsorted, the solution requires a \refthree{floating point} complexity of $O(n\log n)$, and when sorted, it reduces to $O(n)$. 
These implementations improve upon existing methods by orders of magnitude in many cases of interest; notably, they can be over 100 times faster than Gurobi, the grid-search method, \refone{and the SSN method}.
Our numerical experiments also show that ESGS is faster than PLCP by a factor of $\approx 2$ in harder instances where many pivots are required while maintaining a slight advantage in easier instances that require fewer pivots.
Such instances may arise when solving a sequence of similar problems, as is the case when employing an iterative method to solve superquantile constrained composite optimization problems in the form of \eqref{eq:source_problems},
which necessitate repeated calls to a projection oracle.
Moreover, our proposed techniques, with minimal modifications, can be applied to compute the projection onto the vector-$k$-norm ball.
In this case, PLCP can avoid incurring an additional $O(k)$ cost that an ESGS-based approach unavoidably pays (see Step 1 in  \cite[Algorithm 4]{wu2014moreau}).
\refone{Finally, it is anticipated that projection onto the top-$k$-sum sublevel set can find use in projection onto more complex composite superquantile regions, which can be leveraged within iterative solvers for addressing general composite superquantile problems such as \eqref{eq:source_problems}.}

\bibliographystyle{plain}%
\bibliography{refs}
\nosectionappendix

\begin{toappendix}
  \section{Algorithmic detail}
  \label{apx:detail}
  
  \subsection{PLCP}
  \label{apx:detail:plcp}
  
  We recall some background for processing a PLCP($\lambda;q,d,M$) where $q\geq0_{n-1}$ and $M$ is a symmetric, positive definite $Z$-matrix before specializing to the sorted projection problem \eqref{eq:maxksum_projection_sort}.
  The approach largely follows the \emph{symmetric parametric principal pivoting method} outlined in \cite[Algorithm 4.5.2]{cottle2009lcp}, but instead of computing the upper bound for $\lambda$ ahead of time, we check the whether or not the existing solution satisfies the top-$k$-sum budget constraint at each iteration.
  We begin by noting that any basis $\xi\subseteq\{1,\ldots,n-1\}$ partitions the affine relationship into the following system
  \begin{align}
    \label{eq:plcp_partition}
    \begin{bmatrix} w_{\xi}(\lambda)\\ w_{\xi^\complement}(\lambda) \end{bmatrix}
    =
    \begin{bmatrix}
      M_{\xi\xi} & M_{\xi\xi^\complement}\\
      M_{\xi^\complement\xi} & M_{\xi^\complement\xi^\complement}
    \end{bmatrix}
    \begin{bmatrix} z_{\xi}(\lambda)\\ z_{\xi^\complement}(\lambda) \end{bmatrix}
    +
    \begin{bmatrix} q_{\xi} + \lambda d_\xi\\ q_{\xi^\complement} + \lambda d_{\xi^\complement} \end{bmatrix},
  \end{align}
  where the notation $w(\lambda)$ and $z(\lambda)$ is used to emphasize the dependence on parameter $\lambda$, and where the linear system always has a unique solution for every $\xi$ since $M$ has positive principal minors.
  The PLCP solves the projection problem \eqref{eq:maxksum_projection_sort} by identifying an optimal basis $\bar\xi$ and parameter $\bar\lambda\geq0$ such that the solution ${w}(\bar{\lambda})$ and ${z}(\bar\lambda)$ satisfy:
  \begin{itemize}[noitemsep,topsep=-5pt]
      \item subproblem optimality for LCP($q+\bar\lambda d,M$): this consists of (i) complementarity: ${w}_{\bar\xi}(\bar\lambda)=0$ and ${z}_{\bar\xi^\complement}(\bar\lambda)=0$; and (ii) feasibility: ${z}_{\bar\xi}(\bar\lambda)\geq0$, ${w}_{\bar\xi^\complement}(\bar\lambda)\geq0$, and ${w}(\bar\lambda) = M{z}(\bar\lambda) + q + \bar{\lambda}d$;
      \item outer problem optimality: the primal solution $\bar{x}(\bar\lambda) = \sorth{x}^0 - \bar{\lambda}\ind_k + D^\top \bar{z}$, which is an implicit function of $\lambda$, satisfies $\ind_k^\top {x}(\bar\lambda) = r$, assuming that $\ind_k^\top \sorth{x}^0>r$.
  \end{itemize}
  
  Under the present setting, the parametric LCP procedure begins by solving the trivial LCP$(q,M)$ associated with $\lambda=0$ by taking $\xi=\emptyset$, $z\equiv0$, and $w=q\geq0$.
  This solution, denoted $\bigl(z(0), w(0)\bigr)$ may not satisfy the budget constraint, in which case $\lambda$ needs to be increased.
  The remaining steps utilize the fact that the solution map $z(\lambda)$ associated with the LCP subproblem at value $\lambda$ is a piecewise-linear and monotone nondecreasing function in $\lambda$.
  Therefore, as $\lambda$ increases, the procedure only ``adds'' nonnegative components to $z(\lambda)$.
  It stops once a large enough $\lambda\geq0$ has been identified so that the budget constraint is satisfied.
  
  The mechanics of the PLCP specialized to our problem are as follows.
  To solve any LCP subproblem associated with $\lambda$, we seek identify a complementary, feasible basis $\xi$ (depending on $\lambda$) of dimension $m$ that gives rise to the solution map
  \begin{subequations}
  \label{eq:lcp_subproblem_affine}
  \begin{align}
  \label{eq:lcp_subproblem_affine_z}
  z_\xi(\lambda) &= M_{\xi\xi}^{-1} (-q_\xi - \lambda d_\xi) = z_\xi(0) - \lambda M_{\xi\xi}^{-1} d_\xi\\
  \label{eq:lcp_subproblem_affine_w}
  w_{\xi^\complement}(\lambda) &= M_{\xi^\complement\xi}z_\xi(\lambda) + q_{\xi^\complement} + \lambda d_{\xi^\complement}
  \end{align}
  \end{subequations}
  via the linear system \eqref{eq:plcp_partition} where $M_{\xi\xi}^{-1} \coloneqq (M_{\xi\xi})^{-1}$.
  
  We will show three things: (i) beginning from $\xi^1=\{k\}$ in iteration 1, $\xi^t$ remains contiguous for all subsequent iterations $t\geq1$, which leads to a simple form of the minimum ratio test for identifying the breakpoint $\lambda^{t+1}$ and indicates that a basis $\xi^t$ is subproblem-optimal for $\lambda \in [\lambda^t,\lambda^{t+1}]$; (ii) checking whether or not $\lambda^{t+1}$ is ``large enough'' simplifies, \ie that there exists $\bar\lambda\leq\lambda^{t+1}$ such that the primal solution ${x}(\bar\lambda)$ satisfies the budget constraint; and (iii) updating the solution map $z(\lambda^{t+1})$ associated with the new breakpoint $\lambda^{t+1}$ from the previous solution $z(\lambda^t)$ simplifies.
  In the below subsections, we drop the dependence on $t$ and use ``$+$'' to denote the next value when clear.
  The simplified expressions for each step involve the observation that $M_{\xi\xi}^{-1}$ has an explicit form given by
  \begin{align}
    \label{eq:Mxixi_inv}
    (M_{\xi\xi})^{-1}_{ij} = \frac{(\abs{\xi}+1-\max(i,j))\cdot \min(i,j)}{\abs{\xi}+1}.
  \end{align}
  
  \subsubsection*{Identifying the next breakpoint}
  Suppose that $w(\lambda)\geq0$ and $z(\lambda)\geq0$ are optimal for the subproblem with parameter $\lambda$ and contiguous basis $\xi$ (\ie $\xi = \{a,a+1,\ldots,b-1,b\}$ for $n-1\geq b \geq a\geq1$) that contains $k$ with $\abs{\xi}=m$.
  Inspecting \eqref{eq:lcp_subproblem_affine_z}, notice that $z_\xi(\lambda) = (\leq0)_{\xi} + \lambda M_{\xi\xi}^{-1}e_\xi^k$ because $q\geq0$, $d=-e^k$, and because of the fact that $M$ is a $Z$-matrix implies that $M^{-1}_{ij}\geq0$.
  Since $M_{\xi\xi}$ also is a $Z$-matrix, it holds that $M_{\xi\xi}^{-1}\geq0$ and thus that $z_\xi(\lambda')\geq z_{\xi}(\lambda)$ for $\lambda'\geq\lambda$.
  On the other hand, suppose that the current solution does not satisfy the budget constraint, \ie $\lambda$ is not large enough.
  Because of the form of $M=DD^\top$, $M_{\xi^\complement\xi}$ is the matrix of zeros except for at most two negative elements in different columns.
  Explicitly, the two elements of $M_{\xi^\complement\xi}z_\xi(\lambda)$ are: $-z_{\xi_1}(\lambda)$ in index $\xi_1-1$ and $-z_{\xi_m}(\lambda)$ in index $\xi_m+1$.
  Since $d=-e^k$, we may neglect the term $d_{\xi^{\complement}} = 0$, so there are only two possible indices where $w_{\xi^\complement}(\lambda)$ may fail to be nonnegative: $a-1$ where $a\coloneqq\xi_1$, and $b+1$ where $b\coloneqq\xi_m$.\footnote{\textcolor{black}{Note that a parametric pivoting method with similar direction vector $d$ was studied in \cite{pang1980parametric}.}}
  As a result, the ``minimum ratio test'' only requires two comparisons per pivot where the smallest parameter such that $w_{i^*}(\lambda) = 0$ for some $i^*\in \xi^\complement$ is given by
  \begin{align}
      \lambda^+ &\coloneqq\min_{\lambda'\geq\lambda} \bigl\{\lambda':
      M_{\xi^\complement\xi} z_\xi(\lambda') + q_{\xi^\complement} + \lambda d_{\xi^\complement}=0
      \bigr\}
      \overset{(*)}{=} \min_{\lambda'\geq\lambda}\bigl\{\lambda':
      0=-z_{a}(\lambda) + q_{a-1},\;
      0=-z_{b}(\lambda) + q_{b+1}
      \bigr\}\notag\\
      &= \min_{\lambda'\geq\lambda}\bigl\{\lambda':
      0=-z_{a}(0) - \lambda' (M_{\xi\xi}^{-1}e^k_\xi)_{\prescript{}{a}{\xi}} + q_{a-1}\;,\;
      0=-z_{b}(0) - \lambda' (M_{\xi\xi}^{-1}e^k_\xi)_{\prescript{}{b}{\xi}} + q_{b+1}
      \bigr\}\notag\\
      \label{eq:plcp:minratiotest}
      &= \min\bigl\{
      {\bigl(q_{a-1} - z_{a}(0)\bigr)} / (M_{\xi\xi}^{-1} e_\xi^k)_1\;,\;
      {\bigl(q_{b+1} - z_{b}(0)\bigr)} / (M_{\xi\xi}^{-1} e_\xi^k)_m
      \bigr\}
      = \min\bigl\{\lambda^a,\lambda^b\bigr\}
  \end{align}
  where $(*)$ follows (after the first iteration) because of the form of $d$.
  The constant cost of determining $\lambda^+$ is clear from the explicit expression for $(M_{\xi\xi}^{-1})_{ij}$ via \eqref{eq:Mxixi_inv}.
  If $\lambda^+ = \lambda^a$, then we define $s\coloneqq a-1$ and update the basis $\xi^+ = (s,\xi)$; otherwise we define $s\coloneqq b+1$ and set $\xi^+ = (\xi,s)$.
  Therefore, the next basis $\xi^+$ is contiguous and contains $k$.
  Next we must check whether $\lambda^+\geq \bar\lambda$, \ie whether the current basis contains a solution that satisfies the budget constraint for some parameter in the range $[\lambda,\lambda^+]$. 
  
  \subsubsection*{Checking optimality}
  The procedure terminates based on the observation that for the next breakpoint $\lambda^+$ (as determined above with current basis $\xi$), if $\maxsum_k{x(\lambda^+)}<r$ with current basis $\xi$, then there must exist a $\bar\lambda<\lambda^+$ that solves $\maxsum_k{x(\bar\lambda)}=r$ for basis $\xi$.
  From the primal solution map ${x}(\lambda) = \sorth{x}^0 - \lambda \ind_k + D^\top z(\lambda)$, which is derived from stationarity of the Lagrangian, evaluation of the top-$k$-sum simplifies to $\maxsum_k{x(\lambda)} = \ind_k^\top {x}(\lambda) = \sum_{i=1}^k \sorth{x}_i^0 - k\lambda + z_k(\lambda)$
  where
  \begin{align*}
    z_k(\lambda) &= -(M_{\xi\xi}^{-1}q_\xi)_{\prescript{}{k}{\xi}} + \lambda\cdot(M_{\xi\xi}^{-1}e^k_\xi)_{\prescript{}{k}{\xi}} = z_{k}(0) + \lambda\cdot(M_{\xi\xi}^{-1}e^k_\xi)_{\prescript{}{k}{\xi}}.
  \end{align*}
  If $\maxsum_k{{x}(\lambda^+)}<r$, then $\bar\lambda$ satisfies
  \begin{align}
      \bar{\lambda} = \Bigl(\sum_{i=1}^k \sorth{x}^0_i - r + z_k(0)\Bigr) / \bigl(k - (M_{\xi\xi}^{-1})_{\prescript{}{k}{\xi}\prescript{}{k}{\xi}} \bigr),
  \end{align}
  which can be done in constant time due to \eqref{eq:Mxixi_inv}.
  Finally, we can reconstruct $\bar{x}(\bar{\lambda}) = \sorth{x}^0 - \bar\lambda\ind_k + D^\top z(\bar{\lambda})$.
  Since $D$ has only two elements per row, and by observing that $z_{\xi^\complement}(\bar{\lambda})=0$, the matrix-vector multiplication can be performed in $O(\abs{\xi})$ time.
  Otherwise, it remains to update the solution maps $z_{\xi^+}(\lambda^+)$ and $w_{\xi^+}(\lambda^+)$ and then return to the breakpoint identification step.
  
  \subsubsection*{Updating the subproblem solution}
  Thus far, excluding the recovery of a primal optimal solution, our procedure has required computations involving only a very particular subset of $z_{\xi}(0)$, namely $z_{a}(0)$, $z_{k}(0)$, and $z_{b}(0)$.
  This observation allows for performing a constant number of updates per iteration.
  Since $\xi^+\setminus\xi = \{s\}$ changes by one element per iteration and $\xi^+$ remains contiguous, the Schur complement rule can be used to update the three elements of $ z_{\xi^+}(0)$ in constant time, which in turn provides the new solution via $z_{\xi^+}(\lambda^+) = z_{\xi^+}(0) + \lambda M_{\xi^+\xi^+}^{-1} e^k$, where the latter term can be computed in constant time from the form of $M^{-1}$ given by \eqref{eq:Mxixi_inv}.
  
  Accordingly, the goal of this section is to compute $z_{\xi^+_{a^+}}(0)$, $z_{\xi^+_{k^+}}(0)$, and $z_{\xi^+_{b^+}}(0)$ for basis $\xi^+$ at (new) locations $a^+$, $k^+$, and $b^+$ in $\xi^+$ from an existing solution $z_{\xi{a}}(0)$, $z_{\xi_{k}}(0)$, and $z_{\xi_{b}}(0)$ with basis $\xi$.
  There are two cases, corresponding to $\xi^+ = \xi\cup\{s\}$ with
  \begin{enumerate}
    \item $s = a-1$.
    Then $\xi^+=(a-1,\xi)$ so that $a^+ = a-1$, and
    \begin{align*}
      z_{\xi^+}(0) &= \begin{bmatrix} z_{s}(0) \\ z_{\xi}(0) \end{bmatrix}
      = -M_{\xi^+\xi^+}^{-1}q_{\xi^+}
      =
      \begin{bmatrix}
        2 & (-1,0,\ldots,0)\\
        (-1,0,\ldots,0)^\top & M_{\xi\xi}
      \end{bmatrix}^{-1}
      \begin{bmatrix} -q_s\\ -q_{\xi} \end{bmatrix}\\
      &=
      \begin{bmatrix}
        \tfrac1\sigma(z_a(0) - q_{a-1}\\
        z_\xi(0) + \tfrac1\sigma (M_{\xi\xi}^{-1})_{\xi,1}\cdot (z_a(0) - q_{a-1})
      \end{bmatrix},
    \end{align*}
    where $t_{a} = (-1,0,\ldots,0)^\top$ and $\sigma = 2 - t_a^\top M_{\xi\xi}^{-1} t_a = 2-(M_{\xi\xi}^{-1})_{1,1} = (m+2)/(m+1)$.
    Thus
    \begin{align*}
      a^+ &= a-1,\quad \prescript{}{a^+}{\xi^+}=1,\quad \prescript{}{k^+}{\xi^+}=\prescript{}{k}{\xi}+1,\quad \prescript{}{b^+}{\xi^+} = \prescript{}{b}{\xi}+1\\
      z_{{a^+}}(0) &= \tfrac1\sigma(z_a(0) - q_{a-1}(0))\\
      z_{{k^+}}(0) &= z_k(0) + (M_{\xi\xi}^{-1})_{\prescript{}{k}{\xi},1}\cdot \tfrac1\sigma (z_a(0) - q_{a-1}(0))
      = z_k(0) + (M_{\xi\xi}^{-1})_{\prescript{}{k}{\xi},1}\cdot z_{{a^+}}(0)\\
      z_{{b^+}}(0) &= z_b(0) + (M_{\xi\xi}^{-1})_{m,1}\cdot \tfrac1\sigma (z_a(0) - q_{a-1}(0))
      = z_b(0) + (M_{\xi\xi}^{-1})_{\xi_b,\xi_a}\cdot z_{{a^+}}(0).
    \end{align*}
    
    \item $s = b+1$.
    Then $\xi^+=(\xi,b+1)$ so that $b^+=b+1$, and
    \begin{align*}
      z_{\xi^+}(0) &=
      \begin{bmatrix}
        z_\xi(0)\\ z_s(0)
      \end{bmatrix}
      = -M_{\xi^+\xi^+}^{-1}q_{\xi^+}
      =
      \begin{bmatrix}
        M_{\xi\xi} & (0,\ldots,0,-1)^\top\\
        (0,\ldots,0,-1) & 2
      \end{bmatrix}^{-1}
      \begin{bmatrix} -q_{\xi}\\ -q_s \end{bmatrix}\\
      &=
      \begin{bmatrix}
        z_\xi(0) + \tfrac1\sigma (M_{\xi\xi}^{-1})_{\xi,m}\cdot(z_b(0)-q_{b+1})\\
        \tfrac1\sigma(z_b(0) - q_{b+1})
      \end{bmatrix},
    \end{align*}
    where $t_{b} = (0,\ldots,0,-1)^\top$ and $\sigma = 2 - t_{b}^\top (M_{\xi\xi}^{-1})t_{b} = 2 - (M_{\xi\xi}^{-1})_{m,m}=(m+2)/(m+1)$.
    Thus
    \begin{align*}
      b^+ &= b+1,\quad \prescript{}{a^+}{\xi^+}=1,\quad \prescript{}{k^+}{\xi^+}=\prescript{}{k}{\xi},\quad \prescript{}{b^+}{\xi^+} = \prescript{}{b}{\xi}+1\\
      z_{{a^+}}(0) &= z_{{a}}(0) + (M_{\xi\xi}^{-1})_{1,m}\cdot\tfrac1\sigma (z_{b}(0)-q_{b+1})
      = z_{{a}}(0) + (M_{\xi\xi}^{-1})_{1,m}\cdot z_{{b^+}}(0)\\
      z_{{k^+}}(0) &= z_{{k}}(0) + (M_{\xi\xi}^{-1})_{\prescript{}{k}{\xi},m}\cdot\tfrac1\sigma (z_{b}(0)-q_{b+1})
      = z_{{k}}(0) + (M_{\xi\xi}^{-1})_{\prescript{}{k}{\xi},m}\cdot z_{{b^+}}(0)\\
      z_{{b^+}}(0) &= \tfrac1\sigma\cdot(z_{b}(0) - q_{b+1}).
    \end{align*}
  \end{enumerate}
  The constant cost of the solution update procedure is clear due to the explicit formula for $(M_{\xi\xi}^{-1})_{ij}$.
  
  \subsubsection*{Recovering \texorpdfstring{$k_0$ and $k_1$}{k0 and k1}}
  Given optimal indices $\bar a$ and $\bar b$ and a solution $\bar{x}$ produced by PLCP, the sorting-indices $k_0$ and $k_1$ can be recovered without inspecting $\bar{x}$ by setting $(\bar{k}_0, \bar{k}_1) = (k-1,k)$ if the problem is solved immediately; otherwise $(\bar{k}_0, \bar{k}_1) = \bigl(\max\{\bar a-1,0\}, \min\{\bar b+1,n\}\bigr)$.
  
  \subsection{ESGS}
  \label{apx:detail:esgs}
  We now provide additional details justifying the form of (i) the candidate solution for a given candidate index-pair $(k_0',k_1')$ in \eqref{eq:candidate_x_k0k1}; and (ii) the form of the KKT conditions \eqref{eq:maxksum_projection_kkt} that are used in the analysis of the ESGS algorithm.
  
  \subsubsection*{Candidate solution}
  We now provide an argument justifying the construction of the linear system in $\theta',\lambda'$ and candidate solution $x'(k_0',k_1')$ from \eqref{eq:candidate_x_k0k1}.
  The linear equations in $\lambda'$ and $\theta'$ are recovered by summing various components of the KKT conditions
  \begin{align}
    \label{eq:maxksum_projection_linearsystem}
      k_0'\lambda' = \sum_{i=1}^{k_0'} x_i^0 - r + \theta'(k-k_0'),\quad\text{and}\quad
      (k_1'-k_0')\theta' = \sum_{i=k_0'+1}^{k_1'} x_i^0 - \lambda'(k-k_0').
  \end{align}	
  The equation for $\lambda'$ is recovered by summing the stationarity conditions corresponding to indices in $\alpha'$ and eliminating $\ind_{\alpha'}^\top {x}'_{\alpha'}$ using the constraint.
  That is, $\sum_{i=1}^{k_0'}x_i' = \sum_{i=1}^{k_0'}(\sorth{x}_i^0-\lambda')$ and $r = \sum_{i=1}^{k_0'} x'_i + (k-k_0')\theta'$ imply $\sum_{i=1}^{k_0'}\sorth{x}_i^0- k_0'\lambda' = r - (k-k_0')\theta'$.
  On the other hand, equation $\theta'$ is recovered by summing the stationarity conditions corresponding to indices in $\beta'$ and using $\ind_{\beta'}^\top \mu'=(k-k_0)$.
  That is, $\sum_{i=k_0'+1}^{k_1'}x_i' = \sum_{i=k_0'+1}^{k_1'}\sorth{x}_i^0 - (k-k_0')\lambda'$ and $\sum_{i=k_0'+1}^{k_1'}x_i' = (k_1'-k_0')\theta'$.
  The linear system $A\cdot (\lambda,\theta)^\top = b$ has explicit solution
  \begin{align*}
    A&\coloneqq\begin{bmatrix} k_0 & -(k-k_0)\\ k-k_0 & k_1-k_0 \end{bmatrix},\quad b\coloneqq \begin{bmatrix} \sum_{i=1}^{k_0}\sorth x_i^0 -r \\ \sum_{i=k_0+1}^{k_1}\sorth x_i^0 \end{bmatrix},\quad
    A^{-1}=\frac1\rho\begin{bmatrix} k_1-k_0 & k-k_0\\ -(k-k_0) & k_0 \end{bmatrix},
  \end{align*}
  where $\rho \coloneqq \mydet{A}=k_0(k_1-k_0) + (k-k_0)^2$.
  
  \subsubsection*{KKT conditions}
  Next we justify the reduction of the KKT conditions from \eqref{eq:kkt_maxksum_projection_sort_2} to the five conditions listed in \eqref{eq:maxksum_projection_kkt}.
  The KKT conditions \eqref{eq:kkt_maxksum_projection_sort_2} are equivalent to \eqref{eq:maxksum_projection_kkt} because of the following argument.
  Inspecting condition $\bar{x}_{\bar{k}_0} > \bar{\theta}$ at index $k_0'$ and using $\bar{x}_{\bar\beta}=\bar\theta_{\bar\beta}$ to obtain $x_{k_0'+1}'=\theta'$, it holds that
  \begin{align*}
    \begin{cases}
      x_{k_0'}' > \theta' \iff \sorth x_{k_0'}^0-\lambda'>\theta' \iff \sorth x_{k_0'}^0>\theta'+\lambda'\\
    \theta'+\lambda' = x_{k_0'+1}' + \lambda' = \sorth x_{k_0'+1}^0 - \lambda'\mu_{k_0'+1}' + \lambda' \overset{(*)}{\geq} \sorth x_{k_0'+1}^0
    \end{cases}
    \iff \quad \sorth x_{k_0'}^0 > \theta' + \lambda' \geq \sorth x_{k_0'+1}^0,
  \end{align*}
  where $(*)$ holds since $\mu'_{k_0'+1}\in[0,1]$ and $\lambda'>0$.
  Similarly, inspecting condition 
  $\bar{\theta}\blue{\;>\;}\bar{x}_{\bar{k}_1+1}$ %
  at index $k_1'$, and using \blue{the form of $\bar\mu$ and} $\bar{x}_{\bar\beta} = \bar\theta\ind_{\bar\beta}$ to obtain $x_{k_1'}'=\theta'$, it holds that
  \begin{align*}
    \begin{cases}
      \theta' > x_{k_1'+1}' = \sorth x_{k_1'+1}^0\\
      \theta' = x_{k_1'}' = \sorth x_{k_1'}^0 - \mu_{k_1'+1}'\lambda' \overset{(**)}{\leq} x_{k_1'}^0
    \end{cases}
    \iff\quad \sorth x_{k_1'}^0\geq\theta'>\sorth x_{k_1'+1}^0,
  \end{align*}
  where $(**)$ holds since $\mu'_{k_1'}\in[0,1]$ and $\lambda'>0$.  
\end{toappendix}

\end{document}